\documentclass[12pt]{amsart}

\usepackage{amssymb}
\usepackage{graphics,color,epic}
\usepackage[all,2cell,ps,dvips]{xy}

\def\ca{{\mathcal A}}
\def\rca{{\mathcal A}^\prime}
\def\cca{\ca^{\mathbb C}}
\def\rcca{{\rca}^{\mathbb C}}

\def\codim{{\mathrm{codim}}}
\def\fs{{\mathfrak{S}}}

\DeclareMathOperator{\br}{br}
\DeclareMathOperator{\ao}{ao}

\def\crs{{\circlearrowleft}}
\def\Sn{{\fs_n}}

\def\Hz{{\hat{0}}}
\def\CL{{\mathcal{C}^\downarrow}}
\def\aell{{\ell^\prime}}
\def\nn{{\mathbb N}}
\def\rr{{\mathbb R}}
\def\cR{{\rm re}}
\def\INV{{\rm {INV}}}

\def\GL{{\rm GL}}
\def\SL{{\rm SL}}

\newcommand{\wh}{\widehat}
\newcommand{\BG}{\mathrm{BG}}

\definecolor{light}{gray}{.55}
\definecolor{dark}{gray}{.35}

\newtheorem{theorem}{Theorem}[section]
\newtheorem{lemma}[theorem]{Lemma}
\newtheorem{proposition}[theorem]{Proposition}
\newtheorem{corollary}[theorem]{Corollary}

\newtheorem{definition}[theorem]{Definition}

\newtheorem{open}[theorem]{Open problem}
\newtheorem{observation}[theorem]{Observation}

\begin{document}

\title[A conjecture of Postnikov]{From Bruhat intervals to 
  intersection lattices and a conjecture of Postnikov}

\author[A. Hultman]{Axel Hultman} 
\address{Department of Mathematics, KTH-Royal Institute of Technology, 
  SE-100 44, Stockholm, Sweden.}
\author[S. Linusson]{Svante Linusson} 
\address{Department of Mathematics, KTH-Royal Institute of Technology, 
  SE-100 44, Stockholm, Sweden.}
\author[J. Shareshian]{John Shareshian}
\address{Department of Mathematics\\
         Washington University\\
         St Louis, MO 63130\\ USA. }
\author[J. Sj\"ostrand]{Jonas Sj\"ostrand}
\address{Department of Mathematics and Physics, M\"alardalen University,
Box 883, SE-721 23, V\"aster{\aa}s, Sweden.}
\email{hultman@math.kth.se}
\email{linusson@math.kth.se}
\email{shareshi@math.wustl.edu}
\email{jonass@kth.se}

\date{October 5, 2007}

\begin{abstract}
We prove the conjecture of A.\ Postnikov that ({\rm A})
the number of regions in the inversion hyperplane arrangement
associated with a permutation $w\in \Sn$ is at most the number of
elements below $w$ in the Bruhat order, and ({\rm B}) that equality
holds if and only if $w$ avoids the patterns $4231$, $35142$, $42513$
and $351624$. Furthermore, assertion ({\rm A}) is extended to all
finite reflection groups.

A byproduct of this result and its proof is a set of inequalities
relating Betti numbers of complexified inversion arrangements to Betti
numbers of closed Schubert cells. Another consequence is a simple
combinatorial interpretation of the chromatic polynomial of the
inversion graph of a permutation which avoids the above patterns. 
\end{abstract}

\maketitle

\section{Introduction}

We confirm a conjecture of A. Postnikov \cite[Conj 24.4(1)]{Po}, 
relating the interval below
a permutation $w \in \fs_n$ in the Bruhat order and a hyperplane
arrangement determined by the inversions of $w$.  Definitions of
key objects discussed but not defined in this introduction can be found
in Section \ref{pre}.

Fix $n \in \nn$ and $w \in \fs_n$.  An {\it inversion} of $w$ is
 a pair $(i,j)$ such that $1 \leq i<j \leq n$ and $iw<jw$. (Here 
we write $w$ as a function acting from the right on 
$[n]:=\{1,\dotsc,n\}$.)  We write $\INV(w)$ for the set of inversions of $w$.

For $1 \leq i<j \leq n$, set
\[
H_{ij}:=\{(v_1,\dotsc,v_n) \in \rr^n\mid v_i=v_j\},
\]
so $H_{ij}$ is a hyperplane in $\rr^n$.  Set
\[
\rca_w:=\{H_{ij}\mid (i,j) \in \INV(w)\},
\]
so $\rca_w$ is a central hyperplane arrangement in $\rr^n$.  
Let $\cR(w)$ be the number of connected components 
of $\rr^n \setminus \cup \rca_w$. Let $\br(w)$ be the size
 of the ideal generated by $w$ in the Bruhat order on $\fs_n$.

The first part of Postnikov's conjecture is that
\begin{itemize}
\item[(A)] for all $n \in \nn$ and all $w \in \fs_n$ we
 have $\cR(w) \leq \br(w)$.
\end{itemize}
In Theorem \ref{th:injective} below, we give a generalization 
of (A) that holds for all finite reflection groups.

Let $m \leq n$, let $p \in \fs_m$ and let $w \in \fs_n$.  
We say $w$ {\it avoids} $p$ if there do not 
exist $1 \leq i_1<i_2<\dotsb<i_m \leq n$ such that for
 all $j,k \in [m]$ we have $i_jw<i_kw$ if and only if $jp<kp$.  
The second part of Postnikov's conjecture is that
\begin{itemize}
\item[(B)] for all $n \in \nn$ and all $w \in \fs_n$, 
we have $\br(w)=\cR(w)$ if and only if $w$ avoids all 
of 4231, 35142, 42513 and 351624.
\end{itemize}
Here we have written the four permutations to be avoided 
in {\it one line notation}, that is, we write $w \in \fs_n$ as
 $1w\dotsm nw$.  As is standard, we call the permutations to be 
avoided {\em patterns}.  With Theorem \ref{th:Avoiding} we show that
 avoidance of the four given patterns is necessary for the equality 
of $\br(w)$ and $\cR(w)$ and with Corollary \ref{co:surjective} we show 
that this avoidance is sufficient, thus proving all of Postnikov's conjecture.

We remark that the avoidance of the four given patterns has
 arisen in work of Postnikov on total positivity (\cite{Po}), 
work of Gasharov and Reiner on Schubert varieties in partial flag
 manifolds (\cite{GR}) and work of Sj\"ostrand (\cite{sjostrand}) on
 the Bruhat order. In Section~\ref{sec:axel}, we give
yet another characterization of the permutations that avoid these patterns.

The Bruhat order (on any Weyl group) describes the containment 
relations between the closures of Schubert cells in the associated
 flag variety (see for example \cite{chevalley, fulton}).  
Inequality (A) (along 
with our proof of it) indicates that there might be some relationship between
 the cohomology of the closure of the Schubert cell indexed
 by $w$ and the cohomology of the complexification of the
 arrangement $\rca_w$.  In Proposition \ref{pr:betprop} we provide three
 inequalities relating these objects when $w \in \fs_n$ avoids the
 four patterns mentioned above.

In Section~\ref{sec:chromo}, we show how the chromatic polynomial
of the inversion graph of $w\in\fs_n$ (or, equivalently,
the characteristic polynomial of $\rca_w$) keeps track of
the transposition distance from $u$ to $w$ for $u\le w$ in Bruhat order.
In Section~\ref{sec:example} we provide an example to illustrate what
our results say about a specific permutation, and in Section~\ref{sec:open} 
we list some open problems.

\noindent
{\bf Acknowledgement:} Linusson is a Royal Swedish Academy of Sciences 
Research Fellow supported by a grant from the Knut and Alice Wallenberg 
Foundation.  Shareshian is supported by NSF grant DMS-0604233.

\section{Prerequisites} \label{pre}

In this section, we review basic material on hyperplane arrangements
and Coxeter groups that we will use in the sequel. For more information on
these subjects the reader may consult, for example, \cite{stanley} and \cite{BB},
respectively.

A {\em Coxeter group} is a group $W$ generated by a finite set $S$ of
involutions subject only to relations of the form
$(ss^\prime)^{m(s,s^\prime)}=1$, where $m(s,s^\prime)=m(s^\prime,s)\geq
2$ if $s\neq s^\prime$. The pair $(W,S)$ is referred to as a
{\em Coxeter system}. 

The {\em length}, denoted $\ell(w)$, of $w\in W$ is the smallest $k$
such that $w=s_1\dotsm s_k$ for some $s_1, \dotsc, s_k \in S$. If
$w=s_1\dotsm s_k$ and $\ell(w) = k$, then the sequence $s_1\dotsm s_k$
is called a {\em reduced expression} for $w$. 

Every Coxeter group admits a partial order called the {\em Bruhat
  order}.

\begin{definition}\label{de:Bruhat}
  Given $u,w\in W$, we say that $u\leq w$ in the Bruhat order if every
  reduced expression (equivalently, some reduced expression) 
for $w$ contains a subword representing $u$. In
  other words, $u\leq w$ if whenever $w = s_1\dotsm s_k$ with each $s_i\in
  S$ and $\ell(w)=k$, there exist $1\leq i_1<\dotsb <i_j\leq k$ such that
  $u=s_{i_1}\dotsm s_{i_j}$. 
\end{definition}

Although it is not obvious from Definition \ref{de:Bruhat}, the Bruhat
order {\em is} a partial order on $W$. Observe that the identity
element $e\in W$ is the unique minimal element with respect to this order. 

Given $u,w\in W$, the
definition is typically not very useful for determinining whether
$u\leq w$. When $W=\Sn$ is a symmetric group, with $S$ being the set
of adjacent transpositions $(i\,\,\,i+1)$, the following nice criterion 
exists. For a permutation $w\in \Sn$ and $i,j\in [n]=\{1, \dotsc,
n\}$, let 
\[
w[i,j] = |\{m\in [i] \mid mw \geq j\}|.
\]
Let $P(w)=(a_{ij})$ be the permutation matrix corresponding to $w\in \Sn$ (so
$a_{ij}=1$ if $iw=j$ and
$a_{ij}=0$ otherwise). Then $w[i,j]$ is simply the number of ones
 weakly above and
weakly to the right of position $(i,j)$ in $P(w)$, that is, the 
number of pairs $(k,l)$ such that $k \leq i$, $j \leq l$ and $a_{kl}=1$.

A proof of the next proposition can be found in \cite{BB}.
\begin{proposition}[Standard criterion]\label{pr:standardcrit}
  Given $u,v\in \Sn$, we have $u\leq w$ in the Bruhat order if and only if
  $u[i,j]\leq w[i,j]$ for all $(i,j) \in [n]^2$.
\end{proposition}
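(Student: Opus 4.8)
The plan is to read $w[i,j]$ as the number of ones of $P(w)$ lying weakly north-east of position $(i,j)$ — that is, in rows $1,\dots,i$ and columns $j,\dots,n$ — and to prove the two implications separately. Write $u\preceq w$ for the condition that $u[i,j]\le w[i,j]$ for all $(i,j)\in[n]^2$. This is plainly a partial order, since the array $(u[i,j])$ determines $P(u)$; moreover a one-line count gives $v[i,j]\ge\max(0,i-j+1)=e[i,j]$ for every $v\in\Sn$, so the identity $e$ is the least element for $\preceq$, just as it is for the Bruhat order. This will serve as the base case of the induction below.

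For the implication $u\le w\Rightarrow u\preceq w$ I would use that the Bruhat order is generated by its covers and that a cover $u\lessdot w$ amounts to interchanging the entries in two positions $a<c$ of the one-line notation for which $au<cu$. Passing from $P(u)$ to $P(w)$ then moves the one in row $a$ from column $au$ to the column $cu$ to its right, and the one in row $c$ from column $cu$ back to column $au$ to its left. A short check of the three cases $i<a$, $a\le i<c$ and $i\ge c$ shows $w[i,j]-u[i,j]\ge 0$ for every $(i,j)$; hence $\preceq$ is preserved along every saturated chain, and $u\le w$ implies $u\preceq w$.

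The converse is the substantial direction, and I would prove it by induction on $\ell(w)$. If $\ell(w)=0$ then $w=e$ and $u\preceq e$ forces $u=e$ by the first paragraph. If $\ell(w)\ge 1$ and $u\ne w$, fix $k$ with $kw>(k+1)w$ and put $w'=s_kw$, the permutation obtained by swapping the entries in positions $k$ and $k+1$, so that $w'\lessdot w$ and $\ell(w')=\ell(w)-1$. Comparing matrices, the arrays agree, $w'[i,j]=w[i,j]$, except that $w'[k,j]=w[k,j]-1$ exactly when $(k+1)w<j\le kw$; call these columns critical. Set $u'=u$ if $ku<(k+1)u$, and $u'=s_ku$ if $ku>(k+1)u$; in either case $k$ is an ascent of $u'$ and $u'\le u$, and $u'$ again differs from $u$ only in row $k$ and only by $-1$ on the columns with $(k+1)u<j\le ku$. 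In both cases I claim $u'\preceq w'$. For $i\ne k$ this is immediate, and for $i=k$ and $j$ non-critical it is immediate; the only real point is a sub-lemma: if $j$ is critical and does not lie in the range $(k+1)u<j\le ku$ for $u$, then in fact $u[k,j]<w[k,j]$. Its proof looks one row down — from $j>(k+1)w$ one gets $w[k+1,j]=w[k,j]$, and then either $(k+1)u\ge j$, whence $u[k,j]+1=u[k+1,j]\le w[k+1,j]=w[k,j]$, or $ku<j$, whence $u[k,j]=u[k-1,j]\le w[k-1,j]=w[k,j]-1$. (When $j$ is critical and does lie in the range $(k+1)u<j\le ku$, the entry $u'[k,j]=u[k,j]-1$ drops as well, and $u[k,j]\le w[k,j]$ already suffices; in the case $u'=u$ that range is empty.)

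Granting $u'\preceq w'$, the induction hypothesis gives $u'\le w'$. If $u'=u$ this already yields $u\le w'<w$. If $u'=s_ku$, note that $s_k$ is a left descent of both $u$ and $w$, and invoke the standard property of the Bruhat order that then $u\le w\iff s_ku\le s_kw=w'$; since $u'\le w'$, we conclude $u\le w$. I expect the crux to be the sub-lemma inside this two-case split — it is the only place where the hypothesis $u\preceq w$ is used in a row other than the one being inspected — together with the bookkeeping of how $P(w')$ and $P(u')$ differ from $P(w)$ and $P(u)$ in row $k$. The only input about the Bruhat order used beyond Definition \ref{de:Bruhat} is the left-descent lifting property, which may be quoted from \cite{BB}; everything else is elementary manipulation of permutation matrices.
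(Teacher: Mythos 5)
The paper does not actually prove this proposition; it simply refers the reader to \cite{BB}, so there is no ``paper's proof'' to compare against. Your argument, however, is a complete and correct self-contained proof, and it is essentially the standard one: the forward direction by checking that each Bruhat cover (a transposition swap increasing length) weakly increases the matrix $(v[i,j])$, and the converse by downward induction on $\ell(w)$ using a left descent $s_k$ of $w$, comparing the matrices of $w$ and $s_kw$ and of $u$ and (if necessary) $s_ku$ on the critical columns $(k+1)w < j \le kw$, and then invoking the left lifting property to lift $s_ku \le s_kw$ back to $u\le w$. I checked the sub-lemma carefully: for a critical $j$ with $(k+1)u \ge j$ one indeed gets $u[k,j]+1 = u[k+1,j] \le w[k+1,j] = w[k,j]$, and for $ku < j$ one gets $u[k,j] = u[k-1,j] \le w[k-1,j] = w[k,j]-1$; the dichotomy ``$(k+1)u\ge j$ or $ku<j$'' is exactly the negation of $j$ lying in $(ku{+}1,\dots)$-range, so no case is missed. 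Two very minor points: (i) when $k=1$ the second case of the sub-lemma reads $u[0,j]\le w[0,j]$, which requires the (harmless, standard) convention $v[0,j]=0$; you may wish to state this. (ii) In the forward direction you describe a cover as ``interchanging entries in positions $a<c$ with $au<cu$''; strictly, that describes an arbitrary length-increasing transposition rather than a cover, but since your case check works for any such swap this is only a phrasing issue, not a gap. Overall the argument is sound and matches the proof one finds in \cite{BB}.
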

In fact, it is only necessary to compare $u[i,j]$ and $w[i,j]$ for
certain pairs $(i,j)$; see Lemma \ref{lem:bubbles} below. 

Each finite Coxeter group $W$ can be embedded in some $\GL_n(\rr)$ in
 such a way that the elements of $S$ act as reflections.  
That is, having fixed such an embedding, for each $s \in S$ there is 
some hyperplane $H_s$ in $\rr^n$ such that $s$ acts on $\rr^n$ by
 reflection through $H_s$.  Thus a {\em reflection} in $W$ is defined to
 be an element conjugate to an element of $S$. Letting $T$ denote the set 
of reflections in $W$, we therefore 
have $T=\{w^{-1}sw\mid s\in S, \, w\in W\}$. Every finite subgroup 
of $\GL_n(\rr)$ generated by reflections is a Coxeter group.
A {\em natural geometric representation} of a Coxeter 
group $W$ is an embedding of the type just described in which no point 
in $\rr^n \setminus \{0\}$ is fixed by all of $W$.

Sometimes we work with the generating set $T$ rather than $S$.  We
 define the {\em
  absolute length} $\ell^\prime(w)$ as the smallest number of
reflections needed to express $w\in W$ as a product. In the case of
finite Coxeter groups, i.e.\ finite reflection groups, a nice formula
for the absolute length follows from work of Carter \cite[Lemma
2]{carter}. 

\begin{proposition}[Carter \cite{carter}] \label{pr:Carter}
  Let $W$ be a finite reflection group in a natural geometric
  representation. Then, the absolute length of $w\in W$ 
  equals the codimension of the space of fixed points of $w$.
\end{proposition}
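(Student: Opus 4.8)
The plan is to prove the two inequalities $\ell^\prime(w) \geq \codim(\mathrm{Fix}(w))$ and $\ell^\prime(w) \leq \codim(\mathrm{Fix}(w))$ separately, where $\mathrm{Fix}(w) = \{v \in \rr^n \mid vw = v\}$ is the fixed space of $w$ acting in the natural geometric representation. For the first inequality, suppose $w = t_1 \dotsm t_k$ with each $t_i \in T$ a reflection and $k = \ell^\prime(w)$. Each reflection $t_i$ fixes a hyperplane $H_{t_i}$, so $\mathrm{Fix}(t_i)$ has codimension $1$. The fixed space of a product contains the intersection of the individual fixed spaces: $\bigcap_{i=1}^k \mathrm{Fix}(t_i) \subseteq \mathrm{Fix}(w)$, since any vector fixed by every $t_i$ is certainly fixed by their product. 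Since an intersection of $k$ hyperplanes has codimension at most $k$, we get $\codim(\mathrm{Fix}(w)) \leq \codim\bigl(\bigcap_i \mathrm{Fix}(t_i)\bigr) \leq k = \ell^\prime(w)$. This direction is elementary linear algebra and requires no special structure.

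The reverse inequality $\ell^\prime(w) \leq \codim(\mathrm{Fix}(w))$ is the substantive part, and here one should invoke Carter's argument (\cite[Lemma 2]{carter}). The idea is to induct on $d := \codim(\mathrm{Fix}(w))$. If $d = 0$ then $w$ fixes everything, so $w = e$ and $\ell^\prime(w) = 0$. If $d \geq 1$, the key claim is that one can always find a reflection $t \in T$ such that $\codim(\mathrm{Fix}(tw)) = d - 1$; then $tw$ is a product of $d-1$ reflections by induction, so $w = t \cdot (tw)$ is a product of $d$ reflections. To produce such a $t$, one observes that since $\mathrm{Fix}(w) \neq \rr^n$, the orthogonal complement $V := \mathrm{Fix}(w)^\perp$ (an invariant subspace on which $w$ acts without nonzero fixed vectors) is nonzero, and one must locate a root $\alpha$ of $W$ lying in $V$ such that the reflection $t = t_\alpha$ satisfies $\mathrm{Fix}(t) \cap \mathrm{Fix}(w) \supsetneq \mathrm{Fix}(w)$ appropriately — equivalently, $\alpha \notin \mathrm{Fix}(w)^\perp{}^\perp$ is chosen so that the ``moved space'' of $tw$ is exactly one dimension smaller. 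Carter's lemma handles this by analyzing the action of $w$ on $V$ and using that the root system spans: one shows the collection of roots in $V$ is itself (the root system of) a reflection subgroup acting essentially fixed-point-freely on $V$, so multiplying by a suitable reflection peels off one dimension.

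The main obstacle is precisely this existence claim — ensuring that a reflection reducing the codimension by exactly one can be found within $W$ (not merely in the ambient orthogonal group). One must be careful that the reflecting hyperplanes available are only those $H_t$ for $t \in T$, and argue using the root system: if $\beta$ is a root with $\beta w \neq \beta$, then an appropriate modification shows $t_\beta w$ has strictly larger fixed space, and a dimension count (using that $w$ acts with no eigenvalue-$1$ eigenvector on $V$, so its ``$-1$ eigenspace or rotation blocks'' can be addressed root by root) confirms the drop is exactly one. I would organize this as a short lemma isolating the reflection-selection step, then conclude by the two-way inequality. The finite-reflection-group hypothesis enters here: it guarantees both that $T$ is closed under $W$-conjugation and that the associated root system is rich enough for the selection to go through.
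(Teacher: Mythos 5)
The paper does not prove Proposition~\ref{pr:Carter}; it simply cites Carter's Lemma~2 from~\cite{carter}, so any self-contained argument necessarily departs from the paper. Your sketch has the right architecture. The inequality $\codim\mathrm{Fix}(w)\le\ell'(w)$ is handled correctly and is indeed elementary. For the reverse inequality your induction on $d=\codim\mathrm{Fix}(w)$ is the standard skeleton, and you correctly identify the crux: produce $t\in T$ with $\codim\mathrm{Fix}(tw)=d-1$.

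Two points in that crux are left at the level of gesture and would need to be made precise before the sketch becomes a proof. First, the assertion that the roots orthogonal to $\mathrm{Fix}(w)$ form a sub-root-system whose reflections generate the pointwise stabilizer of $\mathrm{Fix}(w)$ --- and in particular that at least one root of $W$ lies in $\mathrm{Fix}(w)^\perp$ --- is exactly Steinberg's fixed-point theorem (the pointwise stabilizer of any subspace in a finite reflection group is generated by the reflections it contains). This is a genuine theorem, not a dimension count, and it should be named or proved; your phrase ``so the root system is rich enough for the selection to go through'' is doing real mathematical work without support. Second, your discussion of ``addressing root by root'' via eigenvalue blocks obscures the cleaner fact: once a root $\alpha\in\mathrm{Fix}(w)^\perp$ is available, \emph{any} such $\alpha$ works. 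Indeed $r_\alpha w$ fixes $\mathrm{Fix}(w)$ pointwise, and restricted to $\mathrm{Fix}(w)^\perp$ the transformation $r_\alpha w$ has determinant $-\det(w|_{\mathrm{Fix}(w)^\perp})$; a short parity argument on the real eigenvalues (using that $w|_{\mathrm{Fix}(w)^\perp}$ has no eigenvalue~$1$) then shows $r_\alpha w$ must have eigenvalue $1$ on $\mathrm{Fix}(w)^\perp$, so $\mathrm{Fix}(r_\alpha w)\supsetneq\mathrm{Fix}(w)$; combined with the fact that a single reflection can change the fixed-space codimension by at most one, this gives the desired drop by exactly one. Spelling out those two pieces --- Steinberg for existence, the determinant-parity argument for the drop --- would turn your outline into a complete proof along Carter's lines.
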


Next, we recall a convenient interaction between reflections and (not
necessarily reduced) expressions. For a proof, the reader may consult
\cite[Theorem 1.4.3]{BB}. By a $\wh{\mbox{hat}}$ over an element, we
understand deletion of that element.

\begin{proposition}[Strong exchange property] \label{pr:SEP}
Suppose $w = s_1\dots s_k$ for some $s_i\in S$. If $t\in T$ has the
property that $\ell(tw)<\ell(w)$, then $tw = s_1\dots \wh{s_i}\dots
s_k$ for some $i\in [k]$.
\end{proposition}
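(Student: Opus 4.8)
To establish the strong exchange property, the plan is to argue via the natural geometric representation of $(W,S)$ and its root system. Write $\Phi=\Phi^+\sqcup\Phi^-$ for the root system, $\{\alpha_s:s\in S\}$ for the simple roots, and, for $\beta\in\Phi^+$, write $t_\beta\in T$ for the associated reflection, so that $t_{\alpha_s}=s$ and $T=\{t_\beta:\beta\in\Phi^+\}$. Two standard facts about finite reflection groups are needed, for both of which one may consult \cite{BB}: (i) every $s\in S$ sends $\alpha_s$ to $-\alpha_s$ and permutes $\Phi^+\setminus\{\alpha_s\}$; and (ii) for $v\in W$ and $\beta\in\Phi^+$ one has $\ell(t_\beta v)<\ell(v)$ if and only if $v^{-1}(\beta)\in\Phi^-$. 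Fact (ii) for a \emph{simple} reflection is the familiar length criterion, but deducing it for an arbitrary reflection --- by writing $\beta=u(\alpha_s)$ with $\ell(u)$ minimal and tracking how left multiplication by $usu^{-1}$ changes length --- is the one genuinely delicate point, and is where I would expect to spend most of the effort if not content to quote it.

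Granting (i) and (ii), the argument runs as follows. Suppose $w=s_1\cdots s_k$ and $t=t_\beta$ with $\beta\in\Phi^+$ and $\ell(tw)<\ell(w)$; then $w^{-1}(\beta)\in\Phi^-$ by (ii). Since $w^{-1}=s_k s_{k-1}\cdots s_1$, set $\beta^{(0)}=\beta$ and $\beta^{(j)}=s_j\bigl(\beta^{(j-1)}\bigr)$ for $1\le j\le k$, so that $\beta^{(k)}=w^{-1}(\beta)$. Because $\beta^{(0)}\in\Phi^+$ and $\beta^{(k)}\in\Phi^-$, there is a least index $i$ with $\beta^{(i)}\in\Phi^-$; then $\beta^{(i-1)}\in\Phi^+$ while $s_i\bigl(\beta^{(i-1)}\bigr)=\beta^{(i)}\in\Phi^-$. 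By (i) the only positive root carried into $\Phi^-$ by $s_i$ is $\alpha_{s_i}$, so $\beta^{(i-1)}=\alpha_{s_i}$ and hence $\beta=s_1 s_2\cdots s_{i-1}(\alpha_{s_i})$.

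It remains to read off the conclusion. Since $t_{u(\alpha_s)}=u\,s\,u^{-1}$ for all $u\in W$, the last identity gives $t=t_\beta=s_1\cdots s_{i-1}\,s_i\,s_{i-1}\cdots s_1$, and therefore
\[
  tw=s_1\cdots s_{i-1}\,s_i\,(s_{i-1}\cdots s_1)(s_1\cdots s_{i-1})\,s_i\,(s_{i+1}\cdots s_k)=s_1\cdots\wh{s_i}\cdots s_k ,
\]
after cancelling $(s_{i-1}\cdots s_1)(s_1\cdots s_{i-1})=e$ and then $s_i s_i=e$, which is exactly the asserted form of $tw$. Note that reducedness of $s_1\cdots s_k$ is never used, so the stated non-reduced generality is automatic; and when $s_1\cdots s_k$ is reduced the roots $s_1\cdots s_{j-1}(\alpha_{s_j})$, $1\le j\le k$, are distinct and positive, which additionally pins $i$ down uniquely. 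The main obstacle, as flagged, is fact (ii) for non-simple reflections; everything after it is bookkeeping with simple reflections acting on roots.
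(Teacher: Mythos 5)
Your proof is correct and follows the standard root-system argument, which is essentially the proof given in the reference the paper cites for this result (Bj\"orner--Brenti, Theorem~1.4.3); the paper itself contains no proof. Your observation that reducedness is never used (whence ``strong'') and your flag of fact~(ii) as the real lemma being quoted are both accurate.
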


A {\em real hyperplane arrangement} is a set $\ca$ of affine hyperplanes 
in some real vector space $V \cong \rr^n$.  We will assume that $\ca$ is
 finite.  The arrangement $\ca$ is called {\em linear} if each $H \in \ca$ 
is a linear subspace of $\rr^n$.  The {\em intersection lattice} of a
 linear arrangement $\ca$ is the set $L_\ca$ of all subspaces of $V$ 
that can be obtained by intersecting some elements of $\ca$, ordered by
 reverse inclusion.  (The minimal element $V$ of $L_\ca$ is obtained by
 taking the intersection of no elements of $\ca$ and will be denoted 
by $\Hz$.)  

A crucial property of $L_\ca$ is that it admits a so-called
{\em EL-labelling}. The general definition of such labellings is not
important to us; see \cite{bjorner} for details. Instead, we
focus on the properties of a particular EL-labelling of $L_\ca$, the {\em
  standard} labelling $\lambda$, which we now describe.

Let $\lhd$ denote the covering relation of $L_\ca$. Choose some total
ordering of the hyperplanes in $\ca$. To each covering $A\lhd
B$ we associate the label 
\[
\lambda(A\lhd B) = \min \{H\in \ca\mid H\leq B\text{ and }H\not \leq A\}.  
\]

The complement $V\setminus \cup \ca$ of the arrangement $\ca$ is a
disjoint union of contractible connected components called the {\em
  regions} of $\ca$. The number of regions can be computed from
$\lambda$. Given any saturated chain 
$C=\{A_0\lhd \dotsb \lhd A_m\}$ in $L_\ca$, say that $C$ is
{\em $\lambda$-decreasing} if $\lambda(A_{i-1}\lhd
A_i)>\lambda(A_i\lhd A_{i+1})$ for all $i\in [m-1]$. 

\begin{proposition}[Bj\"orner \cite{bjorner}, Zaslavsky
  \cite{zaslavsky}] \label{pr:regions}
  The number of regions of $\ca$ equals the number of
  $\lambda$-decreasing saturated chains that contain $\Hz$.
  \begin{proof}
    It follows from the theory of EL-labellings \cite{bjorner} that
    the number of chains with the asserted properties is
    \[
    \sum_{A\in L_\ca}|\mu(\Hz,A)|,
    \]
    where $\mu$ is the M\"obius function of $L_\ca$. By a result of
    Zaslavsky \cite{zaslavsky}, this number is precisely the number of
    regions of $\ca$.
  \end{proof}
\end{proposition}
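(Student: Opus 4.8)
The plan is to derive the statement from two classical results cited in the bibliography: Bj\"orner's theory of edge-lexicographic labellings and Zaslavsky's enumeration of the regions of a real arrangement. First I would record the essential property of the standard labelling $\lambda$, namely that it genuinely is an EL-labelling of $L_\ca$. Concretely, this means that in every interval $[A,B]$ of $L_\ca$ there is a unique saturated chain along which the $\lambda$-labels strictly increase, and that this chain is the lexicographically smallest saturated chain from $A$ to $B$. For the intersection lattice of a linear arrangement equipped with the labelling induced by a fixed total order on the hyperplanes, this is the prototypical example of an EL-labelling; the verification rests on the fact that $L_\ca$ is a geometric lattice, so that saturated chains from $A$ to $B$ correspond to maximal ways of successively intersecting in new hyperplanes, and the matroid exchange axiom controls which hyperplane may be adjoined next. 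I would cite \cite{bjorner} for this.

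Granting that $\lambda$ is an EL-labelling, the structural input is Bj\"orner's theorem relating the M\"obius function to descending chains: for any $A\in L_\ca$, the number of saturated chains from $\Hz$ to $A$ whose sequence of $\lambda$-labels is strictly decreasing equals $(-1)^{\codim A}\mu(\Hz,A)$, where $\codim A$ is the rank of $A$ in $L_\ca$; in particular this count is nonnegative and hence equals $|\mu(\Hz,A)|$. Since $\Hz$ is the minimum of $L_\ca$, a saturated chain of $L_\ca$ that contains $\Hz$ is precisely a maximal chain of some interval $[\Hz,A]$, with $A$ its top element, and such a chain is $\lambda$-decreasing exactly when it is one of the chains counted by Bj\"orner's theorem. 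Summing over all $A\in L_\ca$, the number of $\lambda$-decreasing saturated chains containing $\Hz$ therefore equals $\sum_{A\in L_\ca}|\mu(\Hz,A)|$.

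It then remains to identify this sum with the number of regions. Here I would invoke Zaslavsky's theorem: the number of regions of the real arrangement $\ca$ equals $(-1)^{\dim V}\chi_\ca(-1)$, where $\chi_\ca(t)=\sum_{A\in L_\ca}\mu(\Hz,A)\,t^{\dim A}$ is the characteristic polynomial. Because $L_\ca$ is geometric, $\mu(\Hz,A)$ has sign $(-1)^{\codim A}$, so evaluation at $t=-1$ gives $(-1)^{\dim V}\chi_\ca(-1)=\sum_{A\in L_\ca}\mu(\Hz,A)(-1)^{\codim A}=\sum_{A\in L_\ca}|\mu(\Hz,A)|$. Chaining this with the count from the previous paragraph yields the proposition.

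I expect the only genuinely delicate point to be the first one: verifying that the particular labelling $\lambda$ is an EL-labelling, and checking that ``strictly decreasing label sequence'' in Bj\"orner's theorem matches the notion of $\lambda$-decreasing saturated chain defined above — in particular noticing that the chains being counted need not be maximal in all of $L_\ca$, only within the interval beneath their own top element. Once those identifications are in place, the remainder is bookkeeping with the signs of M\"obius numbers.
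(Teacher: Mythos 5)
Your argument is correct and follows exactly the same route as the paper's proof: use Bj\"orner's EL-labelling theory to equate the number of $\lambda$-decreasing saturated chains from $\Hz$ with $\sum_{A\in L_\ca}|\mu(\Hz,A)|$, then apply Zaslavsky's theorem to identify that sum with the number of regions. You have merely spelled out the intermediate steps (geometric lattice, sign of the M\"obius function, evaluation of the characteristic polynomial at $-1$) that the paper leaves implicit.
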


Given a finite Coxeter group $W$ we may associate to it the {\em
  Coxeter arrangement} $\ca_W$. This is the collection of 
hyperplanes that are fixed by the various reflections in $T$ when
we consider $W$ as a finite reflection group in a standard geometric
representation. The isomorphism type of $L_\ca$ does not depend on the
choice of standard representation.

\section{From intersection lattices to Bruhat intervals} \label{sec:inj}

Let $(W,S)$ be a finite Coxeter system. Fix a reduced expression
$s_1\dotsm s_k$ for some $w\in W$. Given $i\in [k]$, define the
reflection 
\[
t_i = s_1\dotsm s_{i-1}s_is_{i-1}\dotsm s_1 \in T.
\]
The set $T_w = \{t_i\mid i\in [k]\}$ only depends on $w$ and not on the
chosen reduced expression. In fact, $T_w = \{t\in T\mid
\ell(tw)<\ell(w)\}$. We call $T_w$ the {\em inversion set} of 
$w$. If $W=\Sn$ and $T$ is the set of transpositions, then the
transposition $(i\, j)$ lies in $T_w$ if and only if $(i,j) \in \INV(w)$.  
Being reflections, the various $t_i$ correspond to reflecting
hyperplanes $H_i$ in a standard geometric representation of
$W$. Thus, $w$ determines an arrangement of real linear hyperplanes
\[
\ca_w = \{H_i\mid i\in [k]\}
\]
which we call the {\em inversion arrangement} of $w$. It is a
subarrangement of the Coxeter arrangement $\ca_W$. 

Let us order the hyperplanes in $\ca_w$ by $H_1 > H_2 > \dotsb > H_k$. We denote by $\lambda$ the standard
EL-labelling of the intersection lattice $L_w = L_{\ca_w}$ induced by
this order. In particular, $\lambda$ depends on the choice of reduced
expression for $w$. 

Let $\CL$ be the set of $\lambda$-decreasing saturated chains in $L_w$
that include the minimum element $\Hz$. By Proposition
\ref{pr:regions}, $\CL$ is in bijection with the set of regions of
$\ca_w$. We will construct an injective map from 
$\CL$ to the Bruhat interval $[e,w]$.

Let $C = \{\Hz=X_0 \lhd X_1 \lhd \dotsb \lhd X_m\} \subset L_w$ be a saturated
chain. Suppose, for each $i \in [m]$, we have $\lambda (X_{i-1}\lhd X_i) = H_{j_i}$. Define 
\[
p(C) = t_{j_1}\dotsm t_{j_m} \in W.
\]

\begin{proposition}\label{pr:map}
  If $C\in \CL$, then $p(C)w \leq w$ in the Bruhat order. Thus,
$C\mapsto p(C)w$
  defines a map $\phi: \CL \to [e,w]$.
  \begin{proof}
    When $C = \{\Hz=X_0 \lhd X_1 \lhd \dotsb \lhd X_m\} \subset L_w$ is
    $\lambda$-decreasing, we have 
    \[
    p(C)w = \prod_{i\in [k]\setminus \{j_1, \dotsc, j_m\}}s_i.
    \] 
    Thus, $p(C)w$ can be represented by an expression which is a
    subword of the chosen reduced expression for $w$.
  \end{proof}
\end{proposition}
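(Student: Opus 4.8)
The plan is to determine $p(C)w$ exactly. I claim that when $C=\{\Hz=X_0\lhd X_1\lhd\dotsm\lhd X_m\}$ lies in $\CL$ and $\lambda(X_{i-1}\lhd X_i)=H_{j_i}$ for each $i$, then $p(C)w=t_{j_1}\dotsm t_{j_m}w$ equals the product of those $s_i$ with $i\in[k]\setminus\{j_1,\dotsc,j_m\}$, taken in increasing order of index. Since this is literally a subword of the fixed reduced expression $s_1\dotsm s_k$ for $w$, the subword characterization of the Bruhat order (Definition \ref{de:Bruhat}) immediately yields $p(C)w\leq w$, and hence $\phi(C)=p(C)w$ lands in $[e,w]$ (the bound $e\leq p(C)w$ being automatic). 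So everything reduces to the displayed identity.

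To prove it I would first record the one-reflection case: writing $t_j=(s_1\dotsm s_{j-1})\,s_j\,(s_{j-1}\dotsm s_1)$ and $w=(s_1\dotsm s_{j-1})\,s_j\,(s_{j+1}\dotsm s_k)$, the block $s_{j-1}\dotsm s_1$ cancels against the matching initial run $s_1\dotsm s_{j-1}$ of $w$, and then the two copies of $s_j$ cancel, leaving $t_jw=s_1\dotsm\widehat{s_j}\dotsm s_k$. (The strong exchange property, Proposition \ref{pr:SEP}, tells us $t_jw$ is \emph{some} one-letter deletion of $s_1\dotsm s_k$; it is the direct computation that pins down the deleted position as $j$.) For the general case, set $\sigma_j:=s_1\dotsm s_j$, so that $t_j=\sigma_j\sigma_{j-1}^{-1}$ and $w=\sigma_k$; the same reverse-cancellation shows $\sigma_a^{-1}\sigma_b=s_{a+1}\dotsm s_b$ whenever $a<b$. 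Now the hypothesis that $C$ is $\lambda$-decreasing with respect to the ordering $H_1>\dotsm>H_k$ says precisely that $j_1<j_2<\dotsm<j_m$, hence $j_i-1<j_{i+1}$ for every $i$; substituting into $t_{j_1}\dotsm t_{j_m}w=\sigma_{j_1}\sigma_{j_1-1}^{-1}\sigma_{j_2}\sigma_{j_2-1}^{-1}\dotsm\sigma_{j_m}\sigma_{j_m-1}^{-1}\sigma_k$ and telescoping the consecutive blocks $\sigma_{j_i-1}^{-1}\sigma_{j_{i+1}}$ (and the final $\sigma_{j_m-1}^{-1}\sigma_k$) collapses the whole product to $\prod_{i\in[k]\setminus\{j_1,\dotsc,j_m\}}s_i$, as wanted. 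Equivalently, one can prove the identity by induction on $m$, peeling $t_{j_1}$ off the left of the word furnished by the inductive hypothesis.

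The only point demanding care is the index bookkeeping: in the inductive phrasing one must check that $t_{j_1}$ really is the reflection obtained from position $j_1$ of the intermediate word $t_{j_2}\dotsm t_{j_m}w$, and this works exactly because $j_1$ is the \emph{smallest} of the indices, so that the first $j_1$ letters of that word are still $s_1,\dotsc,s_{j_1}$. This is where the $\lambda$-decreasing hypothesis is genuinely used, and it is why one must strip the reflections off from the small-index end rather than the large-index end. Beyond keeping these indices straight I do not anticipate any real obstacle; once the algebraic identity is in hand, the conclusion is the one-line appeal to the subword criterion noted above.
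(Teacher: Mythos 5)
Your proposal is correct and fills in precisely the computation that the paper asserts without justification: that $t_{j_1}\dotsm t_{j_m}w$ collapses to the subword of $s_1\dotsm s_k$ obtained by deleting positions $j_1,\dotsc,j_m$. The route is the same as the paper's — the only input is that $\lambda$-decreasing (with the ordering $H_1>\dotsb>H_k$) forces $j_1<\dotsb<j_m$, the telescoping $\sigma_j\sigma_{j-1}^{-1}$ cancellation yields the displayed identity, and the subword criterion for Bruhat order finishes the argument — so there is nothing more to add.
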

A full description of $\phi$ when $w=(142) \in \fs_4$ appears in 
Section~\ref{sec:example}.
In order to deduce injectivity of $\phi$, we need the following lemma.

\begin{lemma}\label{le:dimension}
  For every saturated chain $C= \{\Hz=X_0 \lhd X_1 \lhd \dotsb \lhd X_m\} 
\subset L_w$, we have $\aell(p(C)) = m$. 
  \begin{proof}
    We proceed by induction on $m$, the case $m=0$ being trivial. 

    By construction, $\aell(p(C)) \leq m$. Suppose, in order to deduce a
    contradiction, that the inequality is strict. The inductive hypothesis
    implies $\aell(p(C\setminus X_m)) = m-1$. Thus, $\aell(p(C)) =
    m-2$. We may therefore write $p(C) = t_1^\prime \dotsm
    t_{m-2}^\prime$ for some reflections $t_i^\prime \in T$ through
    corresponding hyperplanes $H_i^\prime$. 

    Recall the notation $\lambda(X_{i-1}\lhd X_i) = H_{j_i}$ with
    corresponding reflection $t_{j_i}$. Let $F$ denote the fixed point
    space of $p(C)t_{j_m} = p(C\setminus X_m)$. Then, $X_{m-1} =
    H_{j_1}\cap \dotsb \cap H_{j_{m-1}} \subseteq 
    F$. By Proposition \ref{pr:Carter}, $\codim (F) = \aell(p(C)t_{j_m}) =
    m-1 = \codim(X_{m-1})$. Thus, $F=X_{m-1}$. On the other hand,
    $p(C)t_{j_m} = t_1^\prime\dotsm t_{m-2}^\prime t_{j_m}$. Therefore,
    $F\supseteq H_1^\prime \cap \dotsb \cap H_{m-2}^\prime \cap
    H_{j_m}$. Now, $\codim(F)=m-1 \geq \codim (H_1^\prime \cap \dotsb \cap
    H_{m-2}^\prime \cap H_{j_m})$ so that, in fact, $F = H_1^\prime \cap 
    \dotsb \cap
    H_{m-2}^\prime \cap H_{j_m}$. Hence,
    $H_{j_m} \supseteq X_{m-1}$, which is impossible given the
    deefinition of $\lambda$.
  \end{proof}
\end{lemma}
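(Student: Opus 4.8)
The plan is to induct on $m$. The base case $m=0$ is immediate, since $p(C)=e$ and $\aell(e)=0$. Because $p(C)$ is by construction a product of $m$ reflections, we always have $\aell(p(C))\le m$, so the real content is the reverse inequality. Before starting the induction I would record a routine structural observation about the chain: since $L_w$ is graded by codimension, consecutive elements drop in codimension by exactly one, so $\codim X_i = i$; moreover, unwinding the defining formula for the standard label, $X_i = X_{i-1}\cap H_{j_i}$, and hence by iteration $X_i = H_{j_1}\cap\dots\cap H_{j_i}$. In particular $H_{j_m}\not\le X_{m-1}$ in $L_w$ (equivalently, $H_{j_m}\not\supseteq X_{m-1}$ as subspaces) — this is exactly the statement we will eventually contradict.

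For the inductive step, apply the inductive hypothesis to the truncated chain $C\setminus X_m = \{X_0\lhd\dots\lhd X_{m-1}\}$, which gives $\aell(p(C\setminus X_m)) = m-1$, where $p(C\setminus X_m) = t_{j_1}\dotsm t_{j_{m-1}}$. Since $p(C) = p(C\setminus X_m)\,t_{j_m}$ differs from $p(C\setminus X_m)$ by a single reflection, subadditivity of $\aell$ gives $\aell(p(C))\ge m-2$; and comparing determinants — each reflection has determinant $-1$, so $(-1)^{\aell(g)}=\det g$ for every $g\in W$ — forces $\aell(p(C))\equiv m \pmod 2$. Hence $\aell(p(C))\in\{m-2,m\}$, and it suffices to rule out the value $m-2$.

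So assume, for contradiction, that $\aell(p(C)) = m-2$, and fix an expression $p(C) = t_1'\dotsm t_{m-2}'$ as a product of $m-2$ reflections, with corresponding reflecting hyperplanes $H_1',\dots,H_{m-2}'$. I would then compute the fixed-point space $F$ of $p(C)\,t_{j_m} = p(C\setminus X_m)$ in two different ways. On one hand, $p(C\setminus X_m) = t_{j_1}\dotsm t_{j_{m-1}}$ fixes $H_{j_1}\cap\dots\cap H_{j_{m-1}} = X_{m-1}$ pointwise, while Proposition \ref{pr:Carter} together with the inductive hypothesis give $\codim F = m-1 = \codim X_{m-1}$; since one subspace contains the other and they have equal dimension, $F = X_{m-1}$. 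On the other hand, $p(C)\,t_{j_m} = t_1'\dotsm t_{m-2}'\,t_{j_m}$ fixes $H_1'\cap\dots\cap H_{m-2}'\cap H_{j_m}$ pointwise; this is an intersection of $m-1$ hyperplanes, so its codimension is at most $m-1 = \codim F$, and as $F$ contains it, equality of dimensions again yields $F = H_1'\cap\dots\cap H_{m-2}'\cap H_{j_m}\subseteq H_{j_m}$. Combining the two descriptions, $X_{m-1} = F\subseteq H_{j_m}$, i.e.\ $H_{j_m}\le X_{m-1}$, contradicting the definition of $\lambda$ recorded above. This contradiction forces $\aell(p(C)) = m$, completing the induction.

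The one genuinely clever move — and the step I expect to be least obvious at first sight — is to study the fixed space of the \emph{shorter} element $p(C\setminus X_m)$ rather than that of $p(C)$ itself. Carter's formula is rigid enough that knowing the codimension of a fixed space exactly pins down the subspace; extracting that same subspace a second time from the hypothetical short expression for $p(C)$ is precisely what manufactures the forbidden containment $X_{m-1}\subseteq H_{j_m}$. Everything else is routine bookkeeping with codimensions of intersections of hyperplanes.
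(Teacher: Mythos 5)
Your proof is correct and follows essentially the same route as the paper: induct on $m$, assume $\aell(p(C))<m$, deduce $\aell(p(C))=m-2$, then compute the fixed space of $p(C\setminus X_m)=p(C)t_{j_m}$ in two ways via Carter's formula to force the forbidden containment $X_{m-1}\subseteq H_{j_m}$. Your explicit determinant/parity observation justifying $\aell(p(C))\in\{m-2,m\}$ merely fills in a step the paper leaves implicit.
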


We are now in position to prove the main result of this section.

\begin{theorem}\label{th:injective}
  The map $\phi:\CL \to [e,w]$ is injective.
  \begin{proof}
    If $C$ is the saturated chain $\Hz =X_0\lhd\dotsb\lhd X_m$ in
    $L_w$, then $X_m$ is contained in the fixed point space of $p(C)$
    (since $p(C)$ is a product of reflections through hyperplanes,
    all of which contain $X_m$). Lemma \ref{le:dimension} and Proposition
    \ref{pr:Carter} therefore imply that $X_m$ is the
    fixed point space of $p(C)$. In particular, if two chains have the
    same image under $p$, then their respective maximum elements coincide.

    Now suppose $p(C) = p(D)$ for some $C,D\in \CL$. We shall show that
    $C=D$. Write $C = \{\Hz
    =X_0\lhd \dotsb \lhd X_m\}$ and $D = \{\Hz=Y_0 \lhd \dotsb \lhd
    Y_{m^\prime}\}$. We have shown that 
    $m=m^\prime$ and $X_m = Y_m$. Since both $C$ and $D$
    are $\lambda$-decreasing, the construction of $\lambda$ implies
    $\lambda(X_{m-1}\lhd X_m) = \lambda(Y_{m-1}\lhd Y_m) = H$, where $H$ is the
    smallest hyperplane below $X_m = Y_m$ in $L_w$. With $t$ denoting the
    reflection corresponding to $H$, we thus have $p(C\setminus X_m) =
    p(D\setminus Y_m) = p(C)t = p(D)t$. Our theorem is proved by induction 
on $m$.
  \end{proof}
\end{theorem}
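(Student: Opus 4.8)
The plan is to prove injectivity of $\phi$ by induction on $m$, the length of the chains involved, using two ingredients: first, that the image $p(C)$ determines the top element $X_m$ of the chain $C$ (via Carter's formula, Proposition~\ref{pr:Carter}), and second, that the $\lambda$-decreasing property rigidly determines the last covering step of any chain once its top element is fixed. First I would observe that $X_m$ lies in the fixed point space of $p(C)$, since $p(C)$ is a product of the reflections $t_{j_i}$, each of whose hyperplanes $H_{j_i}$ contains $X_i \supseteq X_m$ (in the reverse-inclusion order of $L_w$). Combining Lemma~\ref{le:dimension}, which gives $\aell(p(C)) = m = \codim(X_m)$, with Proposition~\ref{pr:Carter}, which says $\aell(p(C))$ equals the codimension of the fixed point space of $p(C)$, forces the fixed point space to be exactly $X_m$. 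Hence if $p(C) = p(D)$ then $C$ and $D$ have the same length $m$ and the same top element $X_m = Y_m$.

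The inductive step then proceeds as follows. Given $C,D \in \CL$ with $p(C) = p(D)$, write $C = \{\Hz = X_0 \lhd \dotsb \lhd X_m\}$ and $D = \{\Hz = Y_0 \lhd \dotsb \lhd Y_m\}$ with $X_m = Y_m$. Because both chains are $\lambda$-decreasing, the label $\lambda(X_{m-1}\lhd X_m)$ must be the \emph{largest} available label at the top, which by the definition of the standard labelling $\lambda$ is the minimal hyperplane $H \in \ca_w$ with $H \leq X_m$; the same holds for $\lambda(Y_{m-1}\lhd Y_m)$, so both equal this same $H$. Let $t$ be the reflection through $H$. Then the truncated chains $C' = C \setminus \{X_m\}$ and $D' = D \setminus \{Y_m\}$ are again $\lambda$-decreasing saturated chains in $L_w$ containing $\Hz$, i.e.\ they lie in $\CL$, and $p(C') = p(C)t = p(D)t = p(D')$. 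By the inductive hypothesis $C' = D'$, whence $C = C' \cup \{X_m\} = D' \cup \{Y_m\} = D$. The base case $m = 0$ is the single chain $\{\Hz\}$, which is trivially determined.

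The subtle point, and the place where one must be slightly careful, is the claim that the $\lambda$-decreasing condition forces $\lambda(X_{m-1}\lhd X_m)$ to be the globally minimal hyperplane lying below $X_m$. This is exactly the characterizing feature of EL-labellings that makes Proposition~\ref{pr:regions} work: in a $\lambda$-decreasing maximal chain from $\Hz$ to a given element $X_m$, the final label must be as large as possible among all labels of coverings $A \lhd X_m$, which by definition of the standard $\lambda$ is the smallest hyperplane $H$ with $H \le X_m$ and $H \not\le A$ — and one checks that choosing $A = H^{\perp\text{-complement inside }X_m}$... more precisely, among all coatoms $A$ of the interval $[\Hz, X_m]$, the maximum of $\lambda(A \lhd X_m)$ is attained and equals $\min\{H \in \ca_w : H \le X_m\}$, because that minimal hyperplane $H$ satisfies $H \le X_m$ but $H \not\le A$ for the coatom $A$ which is the intersection of all the \emph{other} hyperplanes below $X_m$ together with... — in any case this is a standard property of the standard labelling and I would simply cite the construction of $\lambda$ rather than reprove it. With that in hand, the induction closes cleanly, and the rest is bookkeeping: confirming that truncating a chain in $\CL$ yields a chain in $\CL$ (immediate, since a sub-chain of a $\lambda$-decreasing chain through $\Hz$ is still $\lambda$-decreasing and still contains $\Hz$) and that $p$ is multiplicative along the chain in the way used.
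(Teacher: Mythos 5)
Your proof is essentially the same as the paper's: identify the top element of $C$ as the fixed-point space of $p(C)$ via Lemma~\ref{le:dimension} and Proposition~\ref{pr:Carter}, observe that the $\lambda$-decreasing condition forces the last label to be the unique minimal hyperplane below $X_m$, truncate, and induct on $m$. One small wobble: you say the final label must be the \emph{largest} available label among coverings $A\lhd X_m$, but since $\lambda$-decreasing means labels strictly decrease going up the chain, the final label is actually the \emph{smallest} label appearing on the chain; the correct quick justification is that every label on a chain from $\Hz$ to $X_m$ is a hyperplane $\le X_m$ and hence $\ge H^*:=\min\{H\in\ca_w: H\le X_m\}$, that $H^*$ must occur as some label (it lies below $X_m$ but not below $\Hz$), and that since labels decrease it must be the last one --- you gesture at this and then rightly defer to ``the construction of $\lambda$,'' which is exactly what the paper does.
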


Let us explain how the first part of Postnikov's conjecture, statement
(A) in the Introduction, follows from Theorem \ref{th:injective}. The
symmetric group $\Sn$ acts on $\rr^n$ by permuting
coordinates. Under this action, the transposition $(i\, j)$ acts by a
reflection in the hyperplane given by $x_i=x_j$. However, this is not
quite a natural geometric representation of $\Sn$ because the entire
line given by $x_1=\dots =x_n$ is fixed by all elements. To rectify
the situation we may study the restriction of the action to the subspace
$V^{(n-1)} \subset \rr^n$ that consists of the points in $\rr^n$ whose
coordinates sum to zero. Thus, $\ca_w$ is a hyperplane arrangement in
$V^{(n-1)}$.

Recalling our convention that $uw$ means ``first
$u$, then $w$'' for $u,w\in \Sn$ we see that $(i\, j)\in T_w$ if and only if
$(i,j)$ is an inversion of $w$ in the ordinary sense. Thus, for $w\in
 \Sn$,
\[
\ca_w = \{H\cap V^{(n-1)}\mid H\in \rca_w\}.
\]
In the language of \cite{stanley}, $\ca_w$ is the {\em
  essentialization} of $\rca_w$. The regions in the
  complements of $\ca_w$ and $\rca_w$ are in an obvious bijective
  correspondence and statement (A) follows.

Although we do not know when $\phi$ is surjective for an arbitrary finite 
reflection group, for symmetric groups we have the following result, whose 
proof is contained in the nect two sections.
\begin{theorem}\label{th:main}
If $w\in\fs_n$, the map $\phi$ is surjective (and hence bijective)
if and only if $w$ avoids the patterns
4231, 35142, 42513 and 351624.
\end{theorem}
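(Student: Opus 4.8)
The plan is to establish the two directions of Theorem~\ref{th:main} separately, corresponding to the split already announced in the introduction: pattern avoidance is \emph{necessary} (this will be Theorem~\ref{th:Avoiding}) and pattern avoidance is \emph{sufficient} (this will be Corollary~\ref{co:surjective}). For the necessity direction, I would argue by producing, for each of the four patterns $p\in\{4231,35142,42513,351624\}$, an explicit element $u\le w$ in the Bruhat interval that fails to lie in the image of $\phi$. The natural strategy is to check this first for $w=p$ itself — a finite computation showing $\cR(p)<\br(p)$ in each of the four cases — and then to propagate the strict inequality to every $w$ containing one of these patterns. The propagation step requires a monotonicity or "pattern-embedding" argument: if $w$ contains $p$ as a pattern, then the deficiency $\br(w)-\cR(w)$ is at least the corresponding deficiency for $p$. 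One reasonable way to get this is to exhibit, for a pattern occurrence at positions $i_1<\dots<i_m$, a region-preserving way to restrict the arrangement $\ca_w$ to the coordinate subspace indexed by those positions so that regions of $\ca_w$ map to regions of $\ca_p$, while simultaneously the Bruhat interval $[e,w]$ surjects onto $[e,p']$ for the corresponding parabolic-type copy of $p$; comparing cardinalities then forces the strict inequality for $w$.

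For the sufficiency direction — that $w$ avoiding the four patterns implies $\phi$ is surjective — the approach is to show $\abs{\CL}=\br(w)$, i.e. that the number of $\lambda$-decreasing saturated chains through $\Hz$ equals $\abs{[e,w]}$, since $\phi$ is already injective by Theorem~\ref{th:injective}. Equivalently, by Proposition~\ref{pr:regions} we must show $\cR(w)=\br(w)$ under pattern avoidance. I expect the cleanest route is to build an explicit \emph{inverse} map $\psi:[e,w]\to\CL$, or at least a surjection $[e,w]\to\CL$, using the combinatorial structure that pattern avoidance provides. A promising candidate: given $u\le w$, Proposition~\ref{pr:Carter} says $\aell(uw^{-1})$... more precisely one wants to read off from $u$ a chain in $L_w$ whose associated product of reflections $p(C)$ satisfies $p(C)w=u$. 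The key structural input should be the alternative characterization of the avoiding permutations promised in Section~\ref{sec:axel} (and the connection to Sj\"ostrand's work on the Bruhat order cited as \cite{sjostrand}), which presumably gives a recursive or "independence"-type description of $[e,w]$ that matches a recursive description of the regions of $\ca_w$ — the latter computable via the standard criterion (Proposition~\ref{pr:standardcrit}) and Lemma~\ref{lem:bubbles} on which entries $w[i,j]$ actually matter.

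Concretely, the induction I anticipate runs on $n$ (or on $\ell(w)$): one peels off a suitable generator, expresses $[e,w]$ and the set of regions of $\ca_w$ in terms of the corresponding objects for a smaller pattern-avoiding permutation, checks that the recursion is compatible with $\phi$, and invokes the inductive hypothesis. The class of pattern-avoiding permutations is closed under the relevant deletion operations precisely because the forbidden patterns are themselves closed under pattern containment, which is what makes such an induction feasible. The main obstacle, as I see it, is the sufficiency direction: injectivity of $\phi$ is cheap, but showing that every $u\le w$ is hit requires genuinely understanding how the avoidance of the three "large" patterns $35142,42513,351624$ (beyond the classical $4231$) forces the Bruhat interval to be no larger than the region count — these larger patterns are exactly the obstructions that make a naive $4231$-avoidance argument insufficient, and handling them will demand the finer combinatorics of Sj\"ostrand's characterization rather than a soft counting argument. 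I would therefore expect the bulk of the work (the "next two sections" referenced in the excerpt) to be devoted to setting up that combinatorial machinery and verifying the recursive compatibility, with the four-pattern base cases and the monotonicity of the deficiency being comparatively routine.
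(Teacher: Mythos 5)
Your high-level split into necessity (Theorem~\ref{th:Avoiding}) and sufficiency (Corollary~\ref{co:surjective}) matches the paper, but both directions diverge in ways worth flagging.

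For necessity, your plan is to verify $\cR(p)<\br(p)$ for the four patterns $p$ and then propagate by a ``deficiency monotonicity'' argument: if $w$ contains $p$ then $\br(w)-\cR(w)\geq\br(p)-\cR(p)>0$. The propagation step is a genuine gap. The restriction/surjection argument you sketch does not yield it: a surjection from regions of $\ca_w$ to regions of $\ca_p$ gives $\cR(w)\geq\cR(p)$, and a surjection $[e,w]\to[e,p]$ gives $\br(w)\geq\br(p)$, but these two lower bounds say nothing about the \emph{difference} $\br(w)-\cR(w)$. To transfer a deficit you would need the two surjections to be compatible in a rather strong sense (e.g.\ with matching fiber sizes, or compatible with $\phi$), and it is not clear how to arrange that. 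The paper avoids the issue entirely: Lemma~\ref{le:dimension} shows that $u$ is in the image of $\phi$ only if $uw^{-1}$ can be written as a product of $\aell(uw^{-1})$ inversions of $w$, and then for each pattern occurrence the paper writes down an explicit $u\le w$ (e.g.\ $u=(n_1\,n_4)(n_2\,n_3)w$ for a $4231$-occurrence at $n_1<\dots<n_4$) for which $uw^{-1}$ has absolute length $m$ but one of its constituent transpositions is visibly not an inversion of $w$. This is local, elementary, and sidesteps any global deficiency comparison.

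For sufficiency, you correctly identify the task as showing $\abs{\CL}=\br(w)$ via $\cR(w)=\br(w)$, and you correctly guess that Sj\"ostrand's right-hull criterion (Lemma~\ref{lem:right_hull}) and some induction are needed. But your preferred route --- an explicit inverse $\psi:[e,w]\to\CL$ --- is not what the paper does; in fact the paper lists the existence of a direct bijective proof as an open problem. What the paper actually proves is a pure counting identity $\br(\pi)=\ao(\pi)$ (where $\ao(\pi)=\cR(\pi)$ is the number of acyclic orientations of the inversion graph), by introducing \emph{reduction pairs} of rooks and establishing parallel recurrences: a deletion--contraction recurrence $\ao(\pi)=\ao(\rho)+\ao(\pi-y)$ (light case) or an inclusion--exclusion variant (heavy case) on one side, and a matching recurrence for $\br$ derived from Lemma~\ref{lem:bubbles} and Lemma~\ref{lem:right_hull} on the other, followed by Proposition~\ref{pr:reduction} showing every nonidentity $\pi\in\hat\fs_n$ (or one of $\pi^{-1},\pi^\crs,(\pi^\crs)^{-1}$) admits a reduction pair. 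Your proposal is in the right spirit --- a recursion on the avoiding class compatible on both sides --- but it does not identify the reduction-pair device or the two matching recurrences, which is where essentially all the work lies; as written it is too schematic to count as a proof of this direction.
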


\section{A necessity criterion for surjectivity in symmetric groups}

We now confine our attention to the type $A$ case when $W = \Sn$ is a symmetric
group. Depending on what is most convenient, either one-line notation
or cycle notation is used to represent a permutation $w \in \Sn$.
In this setting, as we have seen, $T$ becomes the set of transpositions in 
$\Sn$ and $T_w = \{(i\, j)\mid i<j\text{ and }iw>jw\}$ can be
identified with $\INV(w)$.

\begin{theorem} \label{th:Avoiding}
Suppose $W$ is a symmetric group. If $\phi:\CL \to
  [e,w]$ is surjective, then $w$ 
  avoids the patterns $4231$, $35142$, $42513$ and $351624$.
  \begin{proof}
    It follows from Lemma \ref{le:dimension} that if $u \leq w$ is in the
    image of $\phi$, then $uw^{-1}$ can be written as a product of
    $\aell(uw^{-1})$ inversions of $w$. Below we construct, for $w$
    containing each of the four given patterns, elements $u\leq w$ that
    fail to satisfy this property.

\noindent    {\bf Case $4231$.}
    Suppose $w$ contains the pattern $4231$ in positions
    $n_1$, $n_2$, $n_3$, and $n_4$, meaning that $n_1w>n_3w>n_2w>n_4w$.
    Then, let $u=(n_1\, n_4)(n_2\, n_3)w$. Invoking the standard criterion,
    Proposition~\ref{pr:standardcrit}, it
    suffices to check $(1\, 4)(2\, 3)4231 = 1324 < 4231$ in order to
    conclude $u<w$. Now, $uw^{-1} = (n_1\, n_4)(n_2\, n_3)$ has absolute
    length $2$. However, $uw^{-1}$ cannot be written as a product of two
    inversions of $w$, because $(n_2\, n_3)$ is not an inversion. 

 \noindent   {\bf Case $35142$.}
    Now assume $w$ contains $35142$ in positions $n_1, \dotsc, n_5$.
    Define $u=(n_1\, n_3\, n_4)(n_2\, n_5)w$. Again we have $u < w$; this 
time since
    $(1\, 3\, 4)(2\, 5)35142 = 12435 < 35142$. We have
    $uw^{-1}=(n_1\, n_3\, n_4)(n_2\, n_5)$ which is of absolute length
    $3$. Neither $(n_1\, n_4)$ nor $(n_3\, n_4)$ is an inversion of $w$, so
    $u$ cannot be written as a product of three members of $T_w$.

\noindent    {\bf Case $42513$.}
    Next, suppose $w$ contains $42513$ in $n_1$ through $n_5$. 
    Then, we
    let $u=(n_2\, n_5\, n_3)(n_1\, n_4)w$ and argue as in the previous cases.

\noindent    {\bf Case $351624$.}
    Finally, if $w$ contains $351624$ in positions
    $n_1$ through $n_6$, we may use $u=(n_1\, n_3\, n_6\, n_4)(n_2\, n_5)w$
    and argue as before.
  \end{proof}
\end{theorem}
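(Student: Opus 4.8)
The plan is to prove the contrapositive: supposing $w$ contains one of the patterns $4231$, $35142$, $42513$ or $351624$, I will exhibit an element $u\leq w$ not lying in the image of $\phi$. The obstruction comes from Lemma~\ref{le:dimension}. If $u=\phi(C)=p(C)w$ for some $C\in\CL$, then $uw^{-1}=p(C)=t_{j_1}\dotsm t_{j_m}$ is a product of $m$ reflections, each lying in the inversion set $T_w$, and Lemma~\ref{le:dimension} gives $m=\aell(p(C))=\aell(uw^{-1})$. Hence membership of $u$ in the image of $\phi$ forces $uw^{-1}$ to be expressible as a product of exactly $\aell(uw^{-1})$ inversions of $w$, and it suffices, for each pattern, to produce some $u\leq w$ for which this is impossible.

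For each pattern I would take $u=\sigma w$, where $\sigma$ is an explicit permutation supported on the positions $n_1<\dotsb<n_k$ of an occurrence of the pattern, written in cycle notation on those positions: $\sigma=(n_1\,n_4)(n_2\,n_3)$ for $4231$, $\sigma=(n_1\,n_3\,n_4)(n_2\,n_5)$ for $35142$, $\sigma=(n_2\,n_5\,n_3)(n_1\,n_4)$ for $42513$, and $\sigma=(n_1\,n_3\,n_6\,n_4)(n_2\,n_5)$ for $351624$. Finding these $\sigma$ is the genuinely creative part; once they are at hand, the inequality $u<w$ follows from the standard criterion (Proposition~\ref{pr:standardcrit}): since $u$ and $w$ agree outside the pattern positions and carry the same values there, the system of comparisons $u[i,j]\leq w[i,j]$ reduces to the analogous system for the flattened permutations in $\fs_k$, i.e.\ to checking $\sigma p<p$ in $\fs_k$ for the relevant pattern $p$ --- a finite verification ($(1\,4)(2\,3)\cdot 4231=1324$, $(1\,3\,4)(2\,5)\cdot 35142=12435$, and likewise in the remaining two cases).

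The core of the argument is then to show that $\sigma=uw^{-1}$ cannot be written as a product of $\aell(\sigma)$ inversions of $w$. Here I would use two elementary facts about transposition factorizations in $\fs_n$: (a) $\aell(\sigma)$ is the sum, over the nontrivial cycles of $\sigma$, of one less than the cycle length; and (b) in any factorization of $\sigma$ into $\aell(\sigma)$ transpositions, the transpositions used are supported on the support of $\sigma$, and for each cycle of $\sigma$ those meeting that cycle form (as edges) a spanning tree of the cycle's vertex set. Granting this, the task reduces to a case-by-case check that, for each of the four patterns, every spanning tree on the set of positions occupied by the relevant three- or four-element cycle of $\sigma$ must contain a pair $(n_a,n_b)$ with $n_a<n_b$ and $n_aw<n_bw$ --- a non-inversion of $w$. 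For $4231$ this is immediate: $\sigma$ is a product of two disjoint transpositions, so its transposition factorization is unique, and $(n_2\,n_3)$ is a non-inversion. For $35142$, among $(n_1\,n_3)$, $(n_1\,n_4)$ and $(n_3\,n_4)$ only $(n_1\,n_3)$ is an inversion of $w$, so no two of them --- hence no spanning tree on $\{n_1,n_3,n_4\}$ --- consists entirely of inversions; the pattern $42513$ is handled identically. For $351624$, among the six pairs within $\{n_1,n_3,n_4,n_6\}$ only $(n_1\,n_3)$ and $(n_4\,n_6)$ are inversions, and since these form two disjoint edges no connected three-edge graph on those four vertices avoids a non-inversion.

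I expect fact (b) together with the accompanying case analysis to be the main obstacle: one must read off correctly from each pattern which pairs of its positions are inversions of $w$ and then confirm that the non-inversions among them form a cut that no spanning tree --- hence no minimal transposition factorization of $\sigma$ --- can avoid. Fact (a) and the reduction of $u<w$ to a finite check are routine.
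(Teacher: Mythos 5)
Your proposal follows exactly the same route as the paper: the same choice of $\sigma$ for each of the four patterns, the same use of Proposition~\ref{pr:standardcrit} reduced to a finite check on the flattened pattern to establish $u<w$, and the same obstruction via Lemma~\ref{le:dimension}. The only real difference is that you spell out the spanning-tree/forest lemma (your facts (a) and (b)) that justifies why any length-$\aell(\sigma)$ transposition factorization must stay within the orbits of $\sigma$, a step the paper states without elaboration (e.g.\ ``Neither $(n_1\,n_4)$ nor $(n_3\,n_4)$ is an inversion of $w$, so $u$ cannot be written as a product of three members of $T_w$''); your version is correct and arguably more self-contained.
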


\section{Pattern avoidance implies $\br(w)=\cR(w)$}
Let $\hat{\fs}_n\subseteq\fs_n$
denote the set of permutations that avoid the four patterns 4231,
35142, 42513, 351624.

In this section we will represent permutations $\pi\in\fs_n$ by rook diagrams.
These are $n$ by $n$ square boards with a rook in entry $(i,j)$, i.e.~row $i$
and column $j$, if $i\pi=j$. If $x$ is a rook, we will write $x_i$ for
its row number and $x_j$ for its column number.

The \emph{inversion graph} of $\pi$, denoted by
$G_\pi$, is a simple undirected graph with the rooks as vertices and
an edge between two rooks
if they form an inversion of $\pi$, i.e.~if one of them is
south-west of the other one.
Let $\ao(\pi)=\ao(G_\pi)$ denote the number of acyclic orientations
of $G_\pi$. Note that
$\ao(\pi)$ equals the number of regions $\cR(\pi)$
of the hyperplane arrangement $\rca_\pi$.

Following Postnikov~\cite{Po}, we will call a permutation $\pi$
\emph{chromobruhatic} if $\br(\pi)=\ao(\pi)$. (A motivation for this
appellation is given in Section~\ref{sec:chromo}.)
Our goal in this section is to prove
that all $\pi\in\hat\fs_n$ are chromobruhatic. This will be accomplished
as follows: First we show that if $\pi$ (or its inverse)
has something called a \emph{reduction pair},
which is a pair of rooks with certain properties,
then there is a recurrence relation for $\br(\pi)$ in terms of $\br(\rho)$
for some permutations $\rho \in \hat{\fs}_n \cup \hat{\fs}_{n-1}$ that 
are ``simpler'' than $\pi$ in a sense that
will be made precise later. It turns out that the very same recurrence 
relation also
works for expressing $\ao(\pi)$ in terms of a few $\ao(\rho)$. Finally, we show
that every $\pi\in\hat\fs_n$ except the identity permutation has a
reduction pair, and hence $\br(\pi)=\ao(\pi)$ by induction.

We will need two useful lemmas about the Bruhat order on the symmetric group.
The first is a well-known variant of Proposition~\ref{pr:standardcrit}
(see e.g.~\cite{GR}).
A square that has a rook strictly to the left in the same
row and strictly below
it in the same column is called a \emph{bubble}.
\begin{lemma}\label{lem:bubbles}
  Let $\pi,\sigma\in\fs_n$. Then $\sigma\le\pi$ in the Bruhat order if and 
only if
  $\sigma[i,j]\le\pi[i,j]$ for every bubble $(i,j)$ of $\pi$.
\end{lemma}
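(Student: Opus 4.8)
The plan is to deduce Lemma~\ref{lem:bubbles} from the standard criterion (Proposition~\ref{pr:standardcrit}) by showing that the inequalities $\sigma[i,j]\le\pi[i,j]$ are automatically satisfied at all positions $(i,j)$ that are not bubbles of $\pi$. The forward direction is immediate: if $\sigma\le\pi$ then Proposition~\ref{pr:standardcrit} gives $\sigma[i,j]\le\pi[i,j]$ for \emph{all} $(i,j)\in[n]^2$, in particular for all bubbles. So the content is the converse, and the key observation is a monotonicity/propagation property of the function $\pi[i,j]$ on the grid: knowing the inequality at the bubbles forces it everywhere.

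First I would record how $\pi[i,j]$ changes as one moves around the grid. Reading off the definition $\pi[i,j]=|\{m\in[i]\mid m\pi\ge j\}|$, one sees that $\pi[i,j]-\pi[i-1,j]\in\{0,1\}$ (it is $1$ exactly when the unique rook in row $i$ lies weakly to the right of column $j$) and similarly $\pi[i,j]-\pi[i,j+1]\in\{0,1\}$ (it is $1$ exactly when the unique rook in column $j$ lies weakly above row $i$). Thus moving up a row or right a column decreases $\pi[i,j]$ by at most one, and the crucial point is: the value strictly decreases going from $(i,j)$ to $(i-1,j)$ precisely when row $i$ has its rook in columns $\ge j$, and strictly decreases going from $(i,j)$ to $(i,j+1)$ precisely when column $j$ has its rook in rows $\le i$.

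Now suppose $(i,j)$ is \emph{not} a bubble of $\pi$; then either there is no rook of $\pi$ strictly below $(i,j)$ in column $j$, or no rook strictly to the left of $(i,j)$ in row $i$. I would argue that in either case the inequality at $(i,j)$ follows from inequalities at positions closer to a bubble, and then induct. For instance, if column $j$ has no rook of $\pi$ strictly below row $i$, then $\pi[i,j]=\pi[i-1,j]$ unless row $i$'s rook sits in column $\ge j$ — and if it did, combined with "no rook below in column $j$'' one checks the relevant cell is forced by a boundary value ($\pi[i,j]$ equals $\pi[i,n]$-type data, or $(i,j)$ lies weakly northeast of the rook in row $i$, making $\pi[i,j]=\pi[n,j]$, etc.). The clean way to package this: show every non-bubble $(i,j)$ satisfies $\pi[i,j]=\pi[i',j']$ for some $(i',j')$ with $i'>i$ or $j'<j$, i.e.\ strictly closer to the bottom-left corner in the termination order, and simultaneously $\sigma[i,j]\le\sigma[i',j']$ for the \emph{same} cell relation — wait, that last inequality goes the wrong way, so instead one uses that on non-bubble cells $\pi[i,j]$ attains its value by a ``staircase'' of equalities leading to a cell that is either a bubble of $\pi$ or on the boundary (where both $\sigma$ and $\pi$ take the forced values $\min(i,n-j+1)$-type quantities, hence are equal). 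Descending induction on, say, $i-j$ (or on distance to the boundary/bubble set) then closes the argument.

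The main obstacle I anticipate is purely bookkeeping: correctly handling the several cases for why a cell is not a bubble (failure in the column direction vs.\ the row direction vs.\ boundary effects) and verifying in each case that the inequality propagates from a cell that is ``more bubble-like,'' without accidentally needing an inequality in the wrong direction. One has to be a little careful that $\sigma$ and $\pi$ share the relevant equality $\pi[i,j]=\pi[i',j']$ — they do not in general, but along the specific staircase of equalities that \emph{defines} $\pi[i,j]$ at a non-bubble cell, the corresponding steps are also non-strict for $\sigma$ by the same rook-position reasoning, or terminate at the boundary where the values agree by a dimension count. Making that shared-staircase argument airtight is the crux; everything else is the routine verification of the two difference formulas for $\pi[i,j]$, which I would not spell out in full.
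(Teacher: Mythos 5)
The paper does not actually prove Lemma~\ref{lem:bubbles}; it simply records it as a ``well-known variant'' of Proposition~\ref{pr:standardcrit} and points the reader to Gasharov--Reiner~\cite{GR}. So there is no in-paper argument to match against, and your sketch has to be judged on its own merits. Your starting point --- reduce to the standard criterion by showing the inequalities at non-bubble cells are forced by those at bubbles, via the two difference formulas $\pi[i,j]-\pi[i-1,j]\in\{0,1\}$ and $\pi[i,j]-\pi[i,j+1]\in\{0,1\}$ --- is exactly the right idea, and the first half of your sketch sets this up correctly.

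However, the ``packaging'' in your final paragraph has a genuine confusion that you half-notice and then paper over. You try to walk each non-bubble cell along a \emph{staircase of equalities} $\pi[i,j]=\pi[i',j']$ heading toward the \emph{bottom-left} corner ($i'>i$ or $j'<j$), and you correctly observe that the needed inequality for $\sigma$ then points the wrong way. The fix is not a ``shared staircase of equalities''; it is to go in the \emph{other} direction and use a \emph{strict increment for $\pi$ matched by a bounded increment for $\sigma$}. Concretely: $(i,j)$ fails to be a bubble iff $i\pi\ge j$ or $j\pi^{-1}\le i$. In the first case the rook of row $i$ contributes, so $\pi[i,j]=\pi[i-1,j]+1$, while always $\sigma[i,j]\le\sigma[i-1,j]+1$; hence $\sigma[i-1,j]\le\pi[i-1,j]$ forces $\sigma[i,j]\le\pi[i,j]$. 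In the second case the rook of column $j$ contributes, so $\pi[i,j]=\pi[i,j+1]+1$, while $\sigma[i,j]\le\sigma[i,j+1]+1$, and again the inequality propagates. Now induct on $i+(n+1-j)$, moving toward the top-right boundary where $\pi[0,\cdot]=\pi[\cdot,n+1]=0=\sigma[0,\cdot]=\sigma[\cdot,n+1]$. Every non-bubble, non-boundary cell reduces to a cell of strictly smaller $i+(n+1-j)$, and bubbles are given; done. Your sketch never lands on this ``$\pi$ steps by exactly 1, $\sigma$ by at most 1'' mechanism, and the boundary-value handwaving (``$\pi[i,j]$ equals $\pi[i,n]$-type data'') does not substitute for it. The claim is true and your overall plan is sound, but as written the crux step is missing rather than merely unwritten.
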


If $\pi$ avoids the forbidden patterns, there is an even simpler
criterion. Define the \emph{right hull of $\pi$}, denoted by
$H_R(\pi)$, as the set of squares in the rook diagram of $\pi$
that have at least one rook weakly south-west of them and at least
one rook weakly north-east of them. Figure~\ref{fig:hull_example} shows
an example. The following lemma is due to Sj\"ostrand~\cite{sjostrand}.
\begin{figure}
\resizebox{20mm}{!}{\includegraphics{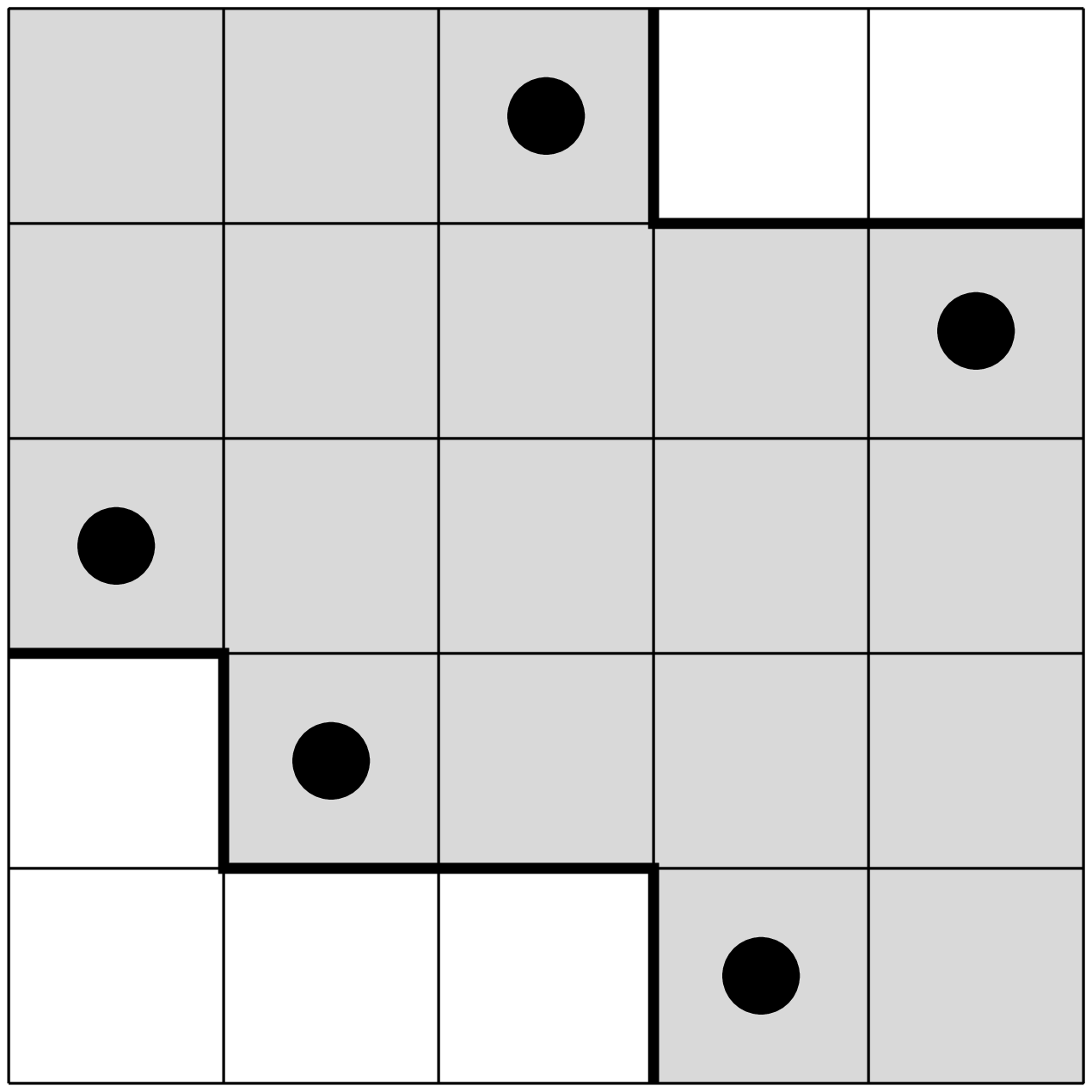}}
\caption{The shaded region constitutes the
right hull of the permutation 35124.}
\label{fig:hull_example}
\end{figure}
\begin{lemma}\label{lem:right_hull}
  Let $\pi\in\hat\fs_n$ and $\sigma\in\fs_n$.
  Then $\sigma\le\pi$ in the Bruhat order if and only if all rooks of
  $\sigma$ lie in the right hull of $\pi$.
\end{lemma}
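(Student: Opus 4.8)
The plan is to prove both implications of Lemma~\ref{lem:right_hull} using the bubble criterion of Lemma~\ref{lem:bubbles} as the starting point, together with the defining pattern-avoidance hypothesis $\pi\in\hat\fs_n$.

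First I would dispatch the easy direction: if $\sigma\le\pi$, then all rooks of $\sigma$ lie in $H_R(\pi)$. Suppose some rook $x$ of $\sigma$, in position $(i,j)$, lies outside the right hull. By definition of $H_R(\pi)$, either there is no rook of $\pi$ weakly south-west of $(i,j)$, or there is no rook of $\pi$ weakly north-east of $(i,j)$. In the first case, $\pi$ has no rook $(k,l)$ with $k\ge i$ and $l\le j$, which means every column $\le j$ has its rook in a row $<i$, i.e.\ $\pi[i,j]=0$; but the rook $x$ of $\sigma$ at $(i,j)$ forces $\sigma[i,j]\ge 1>0=\pi[i,j]$. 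Applying Lemma~\ref{pr:standardcrit} (the standard criterion; note this direction needs no avoidance hypothesis) this contradicts $\sigma\le\pi$. The second case is symmetric, using that $\pi[i,j]$ counts ones weakly above and weakly to the right of $(i,j)$, so an empty north-east quadrant forces the entry $w[i,j]$ relevant to $(i,j)$ to be too small in $\pi$ compared to $\sigma$. (One must phrase this slightly carefully in terms of the $w[i,j]$ statistic rather than a literal quadrant, but the reasoning is the quadrant count.)

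The substantive direction is the converse: if every rook of $\sigma$ lies in $H_R(\pi)$, then $\sigma\le\pi$. By Lemma~\ref{lem:bubbles} it suffices to show $\sigma[i,j]\le\pi[i,j]$ for every bubble $(i,j)$ of $\pi$. Fix such a bubble; so $\pi$ has a rook strictly west of $(i,j)$ in row $i$ (say at $(i,j')$ with $j'<j$) and a rook strictly south of $(i,j)$ in column $j$ (say at $(i',j)$ with $i'>i$). The key geometric claim I would establish is that for a bubble of a pattern-avoiding $\pi$, the portion of the right hull lying in the ``upper-right'' region relevant to $(i,j)$ — precisely, the squares $(k,l)$ with $k\le i$, $l\ge j$ that lie in $H_R(\pi)$ — has a very rigid staircase shape, and in fact $\pi[i,j]$ equals the number of such hull squares in a suitable single column or row, or more robustly, that $\pi[i,j]$ is exactly the number of \emph{rows} $k\le i$ that contain at least one hull square with column $\ge j$. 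Granting such a description, every rook of $\sigma$ contributing to $\sigma[i,j]$ occupies a distinct row $k\le i$ and a distinct column $l\ge j$ and lies in $H_R(\pi)$, hence in one of the admissible rows; distinctness of the rows then gives $\sigma[i,j]\le\pi[i,j]$. The pattern avoidance enters exactly here: without it the hull can fail to be ``convex'' enough and one can have more rooks of $\sigma$ crammed into the quadrant than $\pi$ has ones, which is how the forbidden patterns $4231$, etc., produce counterexamples — indeed the constructions in Theorem~\ref{th:Avoiding} are precisely of this flavour.

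The main obstacle, and the step I would spend the most effort on, is proving the rigid staircase description of $H_R(\pi)$ near a bubble for $\pi\in\hat\fs_n$ — equivalently, showing that if $(i,j)$ is a bubble and the ``north-east quadrant'' $\{(k,l): k\le i,\ l\ge j\}$ contains $c$ squares of $H_R(\pi)$ arranged so that $\sigma$ could place $> \pi[i,j]$ rooks there, then $\pi$ contains one of the four patterns. I would prove the contrapositive by a careful case analysis: assume $\sigma[i,j]>\pi[i,j]$ for some bubble $(i,j)$ with all rooks of $\sigma$ inside $H_R(\pi)$, pick a minimal configuration of rooks of $\pi$ witnessing why the hull extends where it does (the rook $(i,j')$ to the west, $(i',j)$ to the south, plus the rooks of $\pi$ that ``support'' each hull column to the north-east and each hull row), and show the relative order of three, five, or six of these rooks of $\pi$ realizes $4231$, $35142$, $42513$, or $351624$. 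This is exactly the type of argument the authors refer to when they cite \cite{sjostrand}, so I would structure it to mirror that source: first reduce to the case $\sigma[i,j]=\pi[i,j]+1$, then extract a minimal obstruction and enumerate the few shapes it can take.
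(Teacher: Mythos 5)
The paper does not actually prove this lemma; it is quoted directly from Sj\"ostrand \cite{sjostrand}. Your proposal therefore has to stand on its own, and there are two problems with it.

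The easy direction is right in spirit, but the quadrant bookkeeping is swapped. If $(i,j)$ has no rook of $\pi$ weakly south-west of it, then it is the \emph{SW-count} that vanishes, not $\pi[i,j]$ (which, per the paper's definition, counts rooks weakly to the north-east). In fact an empty SW quadrant at $(i,j)$ makes $\pi[i,j]$ large, not zero. One must apply the standard criterion to the complementary statistic, or equivalently to $\pi[i-1,j+1]$; the parenthetical caveat you add at the end of the paragraph should really cover both cases. This is fixable but, as written, the first case is wrong.

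The hard direction rests on a claim that is false. You assert that for a bubble $(i,j)$ of $\pi\in\hat\fs_n$, the quantity $\pi[i,j]$ equals the number of rows $k\le i$ that contain at least one right-hull square in some column $\ge j$, and you then deduce $\sigma[i,j]\le\pi[i,j]$ by placing each contributing rook of $\sigma$ into a distinct admissible row. Take $\pi=2413\in\hat\fs_4$ and the bubble $(3,3)$: here $\pi[3,3]=1$ (only the rook $(2,4)$ is weakly NE of $(3,3)$), yet all four squares $(2,3),(2,4),(3,3),(3,4)$ lie in $H_R(\pi)$, so two rows $k\le 3$ contain hull squares in columns $\ge 3$. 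The ``suitable single row or column'' variant you also suggest fails for the same example: every such row or column count there is $0$ or $2$, never $1$. In particular, the $2\times 2$ block of hull squares at rows $2,3$ and columns $3,4$ would, under your local count, permit $\sigma$ to place two rooks there and violate the bubble inequality; what actually prevents this is that such a partial placement cannot be completed to a permutation lying entirely in $H_R(\pi)$ (the hull squares in row $1$ are only in columns $1,2$, and in row $4$ only in columns $3,4$). That is a \emph{global} constraint your argument does not use. Repairing this requires engaging with the skew-Ferrers-board structure of the right hull and how it interacts with the bubble counts, which is precisely what Sj\"ostrand's proof does; as written, the central step of your hard direction is both left unproven and incorrect as stated.
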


For a permutation $\pi\in\fs_n$, the rook diagram of the inverse permutation
$\pi^{-1}$ is obtained by transposing the rook diagram of $\pi$. Define
$\pi^\crs=\pi_0\pi\pi_0$, where $\pi_0=n(n-1)\dotsm 1$
denotes the maximum element (in the Bruhat order) of $\fs_n$. Note
that the rook diagram of $\pi^\crs$ is obtained by a 180 degree rotation of
the rook diagram of $\pi$.
\begin{observation}\label{obs:trans_and_rot}
  The operations of transposition
  and rotation of the rook diagram of a permutation have the following 
properties.
  \begin{enumerate}
  \item[(a)] They are automorphisms of the Bruhat order, i.e.
    \[
    \sigma\le\tau\,\Leftrightarrow\,\sigma^{-1}\le\tau^{-1}
    \,\Leftrightarrow\,\sigma^\crs\le\tau^\crs
    \,\Leftrightarrow\,(\sigma^\crs)^{-1}\le(\tau^\crs)^{-1}.
    \]
  \item[(b)] They induce isomorphisms of inversion graphs, so
    \[
    G_\sigma\cong G_{\sigma^{-1}}\cong G_{\sigma^\crs}\cong 
G_{(\sigma^\crs)^{-1}}.
    \]
  \item[(c)] The set of the four forbidden patterns is closed under
    transposition and rotation, so
    \[
    \sigma\in\hat{\fs}_n\,\Leftrightarrow\,\sigma^{-1}\in\hat{\fs}_n
    \,\Leftrightarrow\,\sigma^\crs\in\hat{\fs}_n
    \,\Leftrightarrow\,(\sigma^\crs)^{-1}\in\hat{\fs}_n.
    \]
  \end{enumerate}
  From (a) and (b) it follows that
  $\sigma$, $\sigma^{-1}$, $\sigma^\crs$ and $(\sigma^\crs)^{-1}$
  are either all chromobruhatic or all non-chromobruhatic.
\end{observation}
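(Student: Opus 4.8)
The plan is to prove (a), (b) and (c) in turn, each by reducing to a standard fact about the symmetric group, and then to read off the concluding sentence from (a) together with (b).

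\emph{Part (a).} The map $\sigma\mapsto\sigma^{-1}$ reverses reduced words and fixes each generator in $S$, so by Definition~\ref{de:Bruhat} it carries the subword characterization of the Bruhat order to itself and is thus an order automorphism. The map $\sigma\mapsto\sigma^\crs=\pi_0\sigma\pi_0$ is conjugation by $\pi_0$, and since $\pi_0(i,i+1)\pi_0=(n-i,n-i+1)$ this conjugation is an automorphism of the Coxeter system $(\Sn,S)$, hence of the Bruhat order. (Equivalently, $\sigma\mapsto\sigma^\crs$ is the composite of the two order-reversing maps $\sigma\mapsto\pi_0\sigma$ and $\sigma\mapsto\sigma\pi_0$.) The first three equivalences in the displayed chain are then immediate, and the last one follows by composing the first two, since $(\sigma^\crs)^{-1}=(\sigma^{-1})^\crs$.

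\emph{Parts (b) and (c).} For (b), transposing the rook diagram moves the rook in position $(x_i,x_j)$ to $(x_j,x_i)$, while rotating by $180^\circ$ moves it to $(n+1-x_i,\,n+1-x_j)$; two rooks form an inversion precisely when $(x_i-y_i)(x_j-y_j)<0$, and this quantity is unchanged by either operation. Hence each of these bijections of the rook set, and their composite, is an isomorphism from $G_\sigma$ onto the inversion graph of the corresponding permutation, which is (b). For (c), recall the standard facts that $\sigma$ contains a pattern $p$ if and only if $\sigma^{-1}$ contains $p^{-1}$, and if and only if $\sigma^\crs$ contains $p^\crs$ (a $180^\circ$ rotation sends an occurrence in positions $i_1<\dots<i_m$ to an occurrence of the rotated pattern in positions $n+1-i_m<\dots<n+1-i_1$). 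It therefore suffices to check that $\{4231,35142,42513,351624\}$ is closed under $p\mapsto p^{-1}$ and $p\mapsto p^\crs$; a direct computation shows that $4231$ and $351624$ are each fixed by both operations, that $35142$ and $42513$ are each self-inverse, and that $(35142)^\crs=42513$, so the set is closed and (c) follows.

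\emph{The final assertion, and the main obstacle.} An order automorphism of the Bruhat order fixes the unique minimal element $e$, so it restricts to a bijection $[e,\sigma]\to[e,\sigma']$ whenever it sends $\sigma$ to $\sigma'$; by (a), $\br$ is constant on $\{\sigma,\sigma^{-1},\sigma^\crs,(\sigma^\crs)^{-1}\}$. Isomorphic graphs have equally many acyclic orientations, so by (b), $\ao$ is constant on the same set; hence $\br=\ao$ holds for one of these four permutations if and only if it holds for all. I do not anticipate a serious obstacle here, as each ingredient is either a textbook property of the symmetric group or a one-line check; the only point requiring genuine care is the closure computation in (c) — one must remember that it is pattern \emph{containment}, not merely the abstract group operations, that is being transported, and must notice that rotation swaps $35142$ and $42513$ rather than fixing them.
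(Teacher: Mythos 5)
Your proof is correct and is essentially the argument the paper is implicitly appealing to: the observation is stated without proof because each part is a textbook fact. Your verification of (a) via reversal of reduced words and conjugation by $\pi_0$ (or composition of the two order-reversing multiplications by $\pi_0$), (b) via the invariance of the quantity $(x_i-y_i)(x_j-y_j)$ under both symmetries of the board, and (c) via the explicit closure check on the four patterns are exactly what is needed, and your closure computations are right (including the key point that rotation swaps $35142$ and $42513$ rather than fixing them). The final deduction that $\br$ and $\ao$ are each constant on the orbit $\{\sigma,\sigma^{-1},\sigma^\crs,(\sigma^\crs)^{-1}\}$ is also stated correctly.
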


If $x$ is a rook in the diagram of $\pi$ then the image of $x$ under any 
composition of transpositions and rotations is a rook in the diagram of 
the resulting permutation.  In what follows, we sometimes discuss properties 
that the image rook (also called $x$) has in the resulting diagram, while 
still thinking of $x$ as lying in its original position in the diagram of 
$\pi$.

\begin{definition}
  Let $\pi\in\fs_n$ and let $x,y$ be a pair of rooks that is a descent,
  i.e.~$y_i=x_i-1$ and $x_j<y_j$.
  Then, $x,y$ is a \emph{light reduction pair} if
  we have the situation in Figure~\ref{fig:reduction_pairs}(a), i.e.
  \begin{itemize}
  \item there is no rook $a$ with $a_i<y_i$ and $a_j>y_j$, and
  \item there is no rook $a$ with $a_i>x_i$ and $x_j<a_j<y_j$.
  \end{itemize}
  The pair $x,y$ is called a \emph{heavy reduction pair} if we have the 
  situation
  in Figure~\ref{fig:reduction_pairs}(b), i.e.
  \begin{itemize}
  \item there is no rook $a$ with $a_i>x_i$ and $a_j<x_j$,
  \item there is no rook $a$ with $a_i<y_i$ and $a_j>y_j$, and
  \item there is no pair of rooks $a,b$ such that
    $a_i<y_i$ and $b_i>x_i$ and $x_j<a_j<b_j<y_j$ (or, equivalently,
there is some $x_j\le j<y_j$ such that the regions
$[1,y_i-1]\times[x_j+1,j]$ and $[x_i+1,n]\times[j+1,y_j-1]$ are both
empty).
  \end{itemize}
\end{definition}

\begin{figure}
  \setlength{\unitlength}{0.45mm}
  \setlength{\fboxsep}{0.0mm}
  \begin{tabular}{c}
    (a) \\
    \begin{picture}(95,85)(-15,-30)
      \put(0,0){\makebox(10,10){$x$}}
      \put(40,10){\makebox(10,10){$y$}}
      \put(50,20){\colorbox{dark}{\makebox(30,30){}}}
      \put(10,-30){\colorbox{dark}{\makebox(30,30){}}}
      \multiput(-10,-25)(10,0){9}%
      {\begin{picture}(0,0)(0,0)\dottedline{2}(0,0)(0,70)\end{picture}}
      \multiput(-15,-20)(0,10){7}%
      {\begin{picture}(0,0)(0,0)\dottedline{2}(0,0)(90,0)\end{picture}}
    \end{picture}
  \end{tabular}
  \ \ \ \ 
  \begin{tabular}{c}
    (b) \\
    \begin{picture}(140,85)(-30,-30)
      \put(0,0){\makebox(10,10){$x$}}
      \put(70,10){\makebox(10,10){$y$}}
      \put(-30,-30){\colorbox{dark}{\makebox(30,30){}}}
      \put(80,20){\colorbox{dark}{\makebox(30,30){}}}
      \put(10,20){\colorbox{light}{\makebox(30,30){}}}
      \put(40,-30){\colorbox{light}{\makebox(30,30){}}}
      \multiput(-20,-25)(10,0){13}%
      {\begin{picture}(0,0)(0,0)\dottedline{2}(0,0)(0,70)\end{picture}}
      \multiput(-25,-20)(0,10){7}%
      {\begin{picture}(0,0)(0,0)\dottedline{2}(0,0)(130,0)\end{picture}}
    \end{picture}
  \end{tabular}
  \caption{(a) A light reduction pair. (b) A heavy reduction pair.
    The shaded areas are empty. The size of the lighter shaded areas depends on
the underlying permutation.}\label{fig:reduction_pairs}
\end{figure}
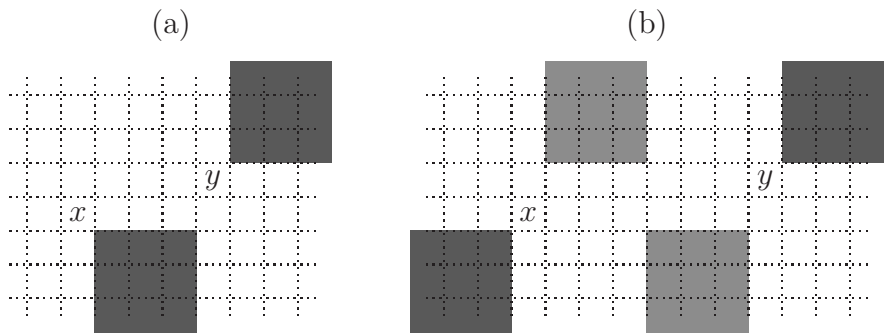

\begin{lemma}\label{lem:inductivestep}
  Let $\pi\in\hat{\fs}_n$ and assume that 
  \begin{itemize}
   \item[(a)] all $\rho\in\hat{\fs}_n$ below $\pi$
  in Bruhat order, and \item[(b)] all $\rho\in\hat{\fs}_{n-1}$ are 
chromobruhatic.
  \end{itemize}
  Then, $\pi$ is chromobruhatic
  if at least one of
  $\pi$, $\pi^{-1}$, $\pi^\crs$ and $(\pi^\crs)^{-1}$ has a reduction pair.
\end{lemma}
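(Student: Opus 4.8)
The plan is to establish, for a permutation $\pi$ possessing a light or heavy reduction pair $x,y$, a single recurrence that simultaneously governs $\br$ and $\ao$, and then to invoke Observation~\ref{obs:trans_and_rot} to reduce the general case (where one of $\pi,\pi^{-1},\pi^\crs,(\pi^\crs)^{-1}$ has such a pair) to that case. Since transposition and rotation preserve membership in $\hat\fs_n$, preserve Bruhat order, preserve inversion graphs, and hence preserve the chromobruhatic property and the hypotheses (a),(b), it suffices to treat the case where $\pi$ itself has a reduction pair. So I would fix such a pair $x,y$ and build the associated ``simpler'' permutations.

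First I would handle the \emph{light} case. Given a light reduction pair $x,y$, I would define $\rho$ to be the permutation obtained by some explicit local move on the rook diagram near $x$ and $y$ — typically either deleting a suitably placed rook (passing to $\hat\fs_{n-1}$) or swapping/sliding the rooks $x,y$ to merge them into a single descent-free configuration, yielding $\rho\in\hat\fs_n$ with $\rho<\pi$. The emptiness conditions in the definition of a light reduction pair are exactly what is needed to guarantee (i) that $\rho$ still avoids the four patterns, and (ii) that the right hull $H_R(\rho)$ differs from $H_R(\pi)$ in a controlled way, so that by Lemma~\ref{lem:right_hull} the Bruhat interval $[e,\pi]$ decomposes: the permutations $\sigma\le\pi$ split according to whether $\sigma$ uses a rook in the ``extra'' region, giving $\br(\pi)=\br(\rho)+(\text{correction})$, where the correction is itself a $\br$ of a simpler permutation covered by the induction. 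On the arrangement side, the inversion graph $G_\pi$ is obtained from $G_\rho$ by adding a vertex and/or edges in a position that makes $\ao$ satisfy the same additive recurrence — this is essentially a deletion–contraction argument for acyclic orientations, using that the reduction-pair conditions make the added vertex's neighborhood a clique or the added edge non-essential in the right way. Matching the two recurrences term by term and applying the inductive hypotheses (a),(b) gives $\br(\pi)=\ao(\pi)$.

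Next I would handle the \emph{heavy} case analogously, but here the recurrence should have \emph{two} terms on each side. The third bullet in the heavy-pair definition — the existence of a column index $j$ with $[1,y_i-1]\times[x_j+1,j]$ and $[x_i+1,n]\times[j+1,y_j-1]$ both empty — is what lets me split the region between $x$ and $y$ at column $j$ and write $\pi$ as built from two simpler permutations $\rho',\rho''\in\hat\fs_n\cup\hat\fs_{n-1}$; Lemma~\ref{lem:right_hull} again lets me count $\br(\pi)$ as a combination of $\br(\rho')$ and $\br(\rho'')$, and the same decomposition of $G_\pi$ yields the matching identity for $\ao$. In both cases the key point is that Lemma~\ref{lem:right_hull} converts the Bruhat interval into a purely diagrammatic object (``rooks lying in a shaded region''), which is manifestly local and so behaves well under the local surgery defining $\rho$, $\rho'$, $\rho''$.

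The main obstacle I anticipate is verifying that the surgered permutations genuinely lie in $\hat\fs_n$ (or $\hat\fs_{n-1}$) and are Bruhat-below $\pi$, and — more delicately — pinning down the \emph{exact} form of the recurrence so that the $\br$-side and the $\ao$-side coincide, rather than merely being ``of the same shape.'' This requires a careful case analysis of how a forbidden pattern could be created by the move, using each emptiness hypothesis in the reduction-pair definition, and a correspondingly careful bookkeeping of which sub-permutations appear as correction terms (one must check they are themselves pattern-avoiding and strictly below $\pi$, or live in $\fs_{n-1}$, so that (a) or (b) applies). I expect the acyclic-orientation recurrence to follow from a clean deletion–contraction once the combinatorics of the diagram is set up correctly, so the real work is entirely on the Bruhat side, in making the hull decomposition precise.
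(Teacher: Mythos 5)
Your reduction via Observation~\ref{obs:trans_and_rot} to the case where $\pi$ itself carries the reduction pair, and your treatment of the light case, line up with the paper: there one moves $x$ up one row and $y$ down one row to get $\rho\in\hat\fs_n$ with $\rho<\pi$, shows via Lemma~\ref{lem:bubbles} that the permutations in $[e,\pi]\setminus[e,\rho]$ are exactly those with a rook at $y$'s square, hence are counted by $\br(\pi-y)$, and matches this against the deletion--contraction identity $\ao(G)=\ao(G\setminus e)+\ao(G/e)$ applied to the edge $\{x,y\}$, using that in the light situation every neighbor of $y'$ is a neighbor of $x'$ so that $G_{\pi-y}\cong G_\pi/\{x,y\}$.

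The gap is in your heavy case. You predict a two-term recurrence and say you expect the $\ao$ side to follow from ``a clean deletion--contraction,'' so that ``the real work is entirely on the Bruhat side.'' Neither is what actually happens. In the heavy case the right-hull decomposition (Lemma~\ref{lem:right_hull}) gives a four-term inclusion--exclusion
\[
\br(\pi)=\br(\rho)+\br(\pi-x)+\br(\pi-y)-\br(\pi-x-y),
\]
because $H_R(\rho)$ differs from $H_R(\pi)$ in \emph{both} squares $x$ and $y$, so one must count permutations forced to place a rook at $x$, at $y$, or at both. Correspondingly, the $\ao$-side recurrence is \emph{not} a deletion--contraction: after deleting the edge $\{x,y\}$, contraction does not produce $G_{\pi-x}$ or $G_{\pi-y}$, and there is no single graph playing the role of $G/e$. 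Instead the paper proves a chromatic-polynomial identity
\[
\chi_\rho(t)-\chi_\pi(t)=\chi_{\pi-x}(t)+\chi_{\pi-y}(t)-t\,\chi_{\pi-x-y}(t),
\]
by a direct coloring count that crucially uses the existence of rooks $a\in A$ and $b\in B$ (supplied exactly by the failure of all four light-pair conditions) and the fact that $A\cup B$ induces a complete bipartite subgraph of $G_\pi$: a $t$-coloring of $G_{\pi-x-y}$ using $\alpha$ colors on $A$ and $\beta$ on $B$ extends in $t-\alpha$, $t-\beta$, and $t-\alpha-\beta$ ways to $G_{\pi-y}$, $G_{\pi-x}$, and to colorings of $G_\rho$ with $x'$, $y'$ equal, respectively. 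Note the extra factor of $t$ in front of $\chi_{\pi-x-y}$, which has no analogue in deletion--contraction; only after substituting $t=-1$ does one land on the four-term $\ao$ recurrence matching the Bruhat side. So the real difficulty in the heavy case is on the graph side, not the Bruhat side, and your sketch as written would not produce the required matching recurrences.
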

\begin{proof}
  If one of $\pi$, $\pi^{-1}$, $\pi^\crs$ and $(\pi^\crs)^{-1}$ has a light
  reduction pair, then by Observation~\ref{obs:trans_and_rot},
  $\pi$, $\pi^{-1}$, $\pi^\crs$ and $(\pi^\crs)^{-1}$
  all satisfy conditions (a) and (b),
  so we may assume that $\pi$ has a light reduction pair $x,y$.
  On the other hand, if none of $\pi$, $\pi^{-1}$, $\pi^\crs$
  and $(\pi^\crs)^{-1}$ has a light reduction pair, then one
  of them has a heavy reduction pair $x,y$ and we may assume that it is $\pi$.

  In either case, replace $x$ by a rook $x'$
  immediately above it, and replace $y$ by a rook $y'$
  immediately below it. The resulting permutation $\rho$ is below
  $\pi$ in the Bruhat order. Note that
  $\rho\in\hat{\fs}_n$ --- a forbidden pattern in $\rho$ must include
  both of $x'$ and $y'$ but an inspection of the forbidden patterns in
  Figure~\ref{fig:forbidden_patterns} and the reduction pair situations in
  Figure~\ref{fig:reduction_pairs} reveals that this is impossible.
  Thus, by the assumption in the lemma we conclude that $\rho$ is
  chromobruhatic.
  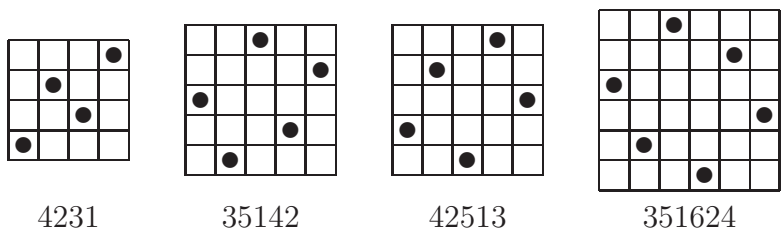
\begin{figure}
    \setlength{\unitlength}{0.4mm}
    \setlength{\fboxsep}{0.0mm}
    \begin{tabular}{cccc}
      \begin{picture}(50,60)(0,-55)
        \put(40,-10){\circle*{5}}
        \put(20,-20){\circle*{5}}
        \put(30,-30){\circle*{5}}
        \put(10,-40){\circle*{5}}
        \multiput(5,-5)(10,0){5}%
        {\begin{picture}(0,0)(0,0)\drawline(0,0)(0,-40)\end{picture}}
        \multiput(5,-5)(0,-10){5}%
        {\begin{picture}(0,0)(0,0)\drawline(0,0)(40,0)\end{picture}}
      \end{picture}
      &
      \begin{picture}(60,60)(0,-60)
        \put(30,-10){\circle*{5}}
        \put(50,-20){\circle*{5}}
        \put(10,-30){\circle*{5}}
        \put(40,-40){\circle*{5}}
        \put(20,-50){\circle*{5}}
        \multiput(5,-5)(10,0){6}%
        {\begin{picture}(0,0)(0,0)\drawline(0,0)(0,-50)\end{picture}}
        \multiput(5,-5)(0,-10){6}%
        {\begin{picture}(0,0)(0,0)\drawline(0,0)(50,0)\end{picture}}
      \end{picture}
      &
      \begin{picture}(60,60)(0,-60)
        \put(40,-10){\circle*{5}}
        \put(20,-20){\circle*{5}}
        \put(50,-30){\circle*{5}}
        \put(10,-40){\circle*{5}}
        \put(30,-50){\circle*{5}}
        \multiput(5,-5)(10,0){6}%
        {\begin{picture}(0,0)(0,0)\drawline(0,0)(0,-50)\end{picture}}
        \multiput(5,-5)(0,-10){6}%
        {\begin{picture}(0,0)(0,0)\drawline(0,0)(50,0)\end{picture}}
      \end{picture}
      &
      \begin{picture}(70,60)(0,-65)
        \put(30,-10){\circle*{5}}
        \put(50,-20){\circle*{5}}
        \put(10,-30){\circle*{5}}
        \put(60,-40){\circle*{5}}
        \put(20,-50){\circle*{5}}
        \put(40,-60){\circle*{5}}
        \multiput(5,-5)(10,0){7}%
        {\begin{picture}(0,0)(0,0)\drawline(0,0)(0,-60)\end{picture}}
        \multiput(5,-5)(0,-10){7}%
        {\begin{picture}(0,0)(0,0)\drawline(0,0)(60,0)\end{picture}}
      \end{picture}
      \\
      4231 & 35142 & 42513 & 351624
    \end{tabular}
    \caption{The four forbidden patterns.}
    \label{fig:forbidden_patterns}
  \end{figure}

  {\bf Case 1: $x,y$ is a light reduction pair in $\pi$.}
  What permutations
  are below $\pi$ but not below $\rho$ in the Bruhat order? Note that
  $\rho$ has the same bubbles as $\pi$, plus an additional bubble
  immediately above
  $y'$, i.e.~at the position of $y$. Now, Lemma~\ref{lem:bubbles} yields
  that the only permutations 
  below $\pi$ that are not below $\rho$ are the ones with a rook at the
  position of $y$.
  These are in one-one correspondence with the permutations weakly below
  the permutation $\pi-y\in\fs_{n-1}$ that we obtain by deleting $y$ from $\pi$
  together with its row and column.
  Thus, we have
  \begin{equation}\label{eq:light_br_recurrence}
    \br(\pi)=\br(\rho)+\br(\pi-y).
  \end{equation}
  Now consider the inversion graphs of $\pi$, $\rho$ and $\pi-y$.
  It is not hard to show that
  $G_\rho$ is isomorphic to the graph $G_\pi\setminus\{x,y\}$
  obtained by deletion of the edge $\{x,y\}$. Since all
  neighbors of $y'$ are also neighbors of $x'$ in $G_\rho$,
  the graph $G_{\pi-y}=G_{\rho-y'}$ is isomorphic to
  the graph $G_\pi/\{x,y\}$ obtained by contraction of the edge $\{x,y\}$.
  It is a well-known fact that,
  for any edge $e$ in any simple graph $G$, the number of acyclic
  orientations satisfies the recurrence relation
  $\ao(G)=\ao(G\setminus e)+\ao(G/e)$.
  Thus, in our case we get
  \begin{equation}\label{eq:light_ao_recurrence}
    \ao(\pi)=\ao(\rho)+\ao(\pi-y).
  \end{equation}
  The right-hand sides of equations~(\ref{eq:light_br_recurrence})
  and~(\ref{eq:light_ao_recurrence}) are equal since $\rho$ and
  $\pi-y$ are chromobruhatic. We conclude that $\br(\pi)=\ao(\pi)$
  so that $\pi$ also is chromobruhatic.

  {\bf Case 2: $x,y$ is a heavy reduction pair in $\pi$, and none of 
    $\pi$, $\pi^{-1}$, $\pi^\crs$ and $(\pi^\crs)^{-1}$ has a light
    reduction pair.}
  Since $y,x$ is not a light reduction pair in $\pi^\crs$,
  there exists a rook $a$ in the region $A=[1,y_i-1]\times[x_j+1,y_j-1]$.
  Analogously, since $x,y$ is not a light reduction pair in $\pi$,
  there exists a rook $b$ in the region $B=[x_i+1,n]\times[x_j+1,y_j-1]$.
  As can be seen in Figure~\ref{fig:heavy_swap}, the right hulls of
  $\pi$ and $\rho$ are the same except for the two squares containing 
$x$ and $y$,
  which belong to $H_R(\pi)$ but not to $H_R(\rho)$.
  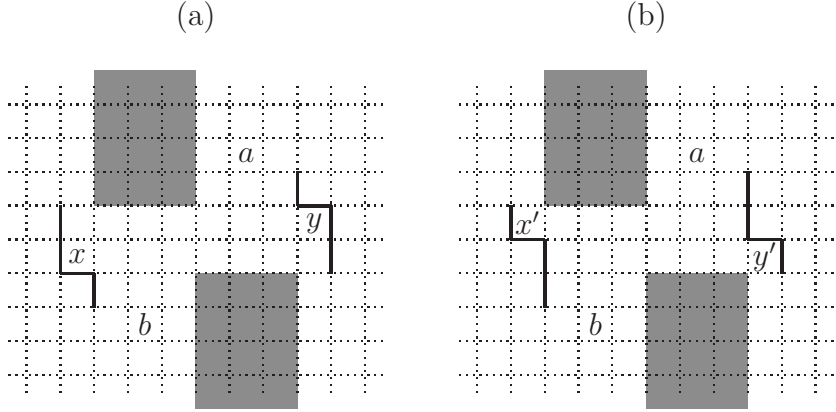
\begin{figure}
    \setlength{\unitlength}{0.45mm}
    \setlength{\fboxsep}{0.0mm}
    \begin{tabular}{c}
      (a) \\
      \begin{picture}(110,110)(-15,-40)
        \put(0,0){\makebox(10,10){$x$}}
        \put(70,10){\makebox(10,10){$y$}}
        \put(20,-20){\makebox(10,10){$b$}}
        \put(50,30){\makebox(10,10){$a$}}
        \put(10,20){\colorbox{light}{\makebox(30,40){}}}
        \put(40,-40){\colorbox{light}{\makebox(30,40){}}}
        {
          \thicklines
          \drawline(0,20)(0,0)(10,0)(10,-10)
          \drawline(70,30)(70,20)(80,20)(80,0)
        }
        \multiput(-10,-35)(10,0){11}%
        {\begin{picture}(0,0)(0,0)\dottedline{2}(0,0)(0,90)\end{picture}}
        \multiput(-15,-30)(0,10){9}%
        {\begin{picture}(0,0)(0,0)\dottedline{2}(0,0)(110,0)\end{picture}}
      \end{picture}
    \end{tabular}
    \ \ \ \ 
    \begin{tabular}{c}
      (b) \\
      \begin{picture}(110,110)(-15,-40)
        \put(0,10){\makebox(10,10){$x'$}}
        \put(70,0){\makebox(10,10){$y'$}}
        \put(20,-20){\makebox(10,10){$b$}}
        \put(50,30){\makebox(10,10){$a$}}
        \put(10,20){\colorbox{light}{\makebox(30,40){}}}
        \put(40,-40){\colorbox{light}{\makebox(30,40){}}}
        {
          \thicklines
          \drawline(0,20)(0,10)(10,10)(10,-10)
          \drawline(70,30)(70,10)(80,10)(80,0)
        }
        \multiput(-10,-35)(10,0){11}%
        {\begin{picture}(0,0)(0,0)\dottedline{2}(0,0)(0,90)\end{picture}}
        \multiput(-15,-30)(0,10){9}%
        {\begin{picture}(0,0)(0,0)\dottedline{2}(0,0)(110,0)\end{picture}}
      \end{picture}
    \end{tabular}
    \caption{(a) The heavy reduction pair $x,y$ in $\pi$.
      The shaded areas are empty and the thick lines show segments of
      the border of the right hull of $\pi$. (b) The right hull of $\rho$
      is the same as that of $\pi$, except for the two squares of $x$ and $y$.}
    \label{fig:heavy_swap}
  \end{figure}

  By Lemma~\ref{lem:right_hull} and inclusion-exclusion, we get
  \begin{equation}\label{eq:heavy_br_recurrence} 
    \br(\pi)=\br(\rho)+\br(\pi-x)+\br(\pi-y)-\br(\pi-x-y)
  \end{equation}
  where $\pi-x-y \in \fs_{n-2}$ is the permutation whose rook diagram is 
obtained by
  deleting both of $x$ and $y$ together with their rows and columns.

  Now, for any permutation $\sigma$,
  let $\chi_\sigma(t)=\chi_{G_\sigma}(t)$
  denote the chromatic polynomial of the inversion graph
  $G_\sigma$ (so for each positive integer $n$, $\chi_{G_\sigma}(n)$
  is the number of vertex colorings with at most $n$ colors
  such that neighboring vertices get distinct colors.
  The following argument is based on an idea by Postnikov.
  It is a well-known fact that $\ao(G)=(-1)^n\chi_G(-1)$ for any graph $G$
  with $n$ vertices.
  Since $G_\rho=G_\pi\setminus\{x,y\}$, the difference
  $\chi_\rho(t)-\chi_\pi(t)$ is the number of $t$-colorings of
  $G_\rho$ where $x'$ and $y'$ have the same color.
  
  Let $\mathcal{C}$
  be any $t$-coloring of $G_{\pi-x-y}$ using, say, $\alpha$ different
  colors for the vertices in $A$ and $\beta$ different colors for those
  in $B$. Since the subgraph of $G_\pi$ induced by $A\cup B$
  is a complete bipartite graph, the coloring $\mathcal C$ must use
  $\alpha+\beta$ different colors for the vertices in $A\cup B$.
  We can extend $\mathcal C$ to a coloring of $G_{\pi-y}$ by
  coloring the vertex $x$ with any of the $t-\alpha$ colors that
  are not used for the vertices in $A$. Analogously, we
  can extend $\mathcal C$ to a coloring of $G_{\pi-x}$ by
  coloring the vertex $y$ with any of the $t-\beta$ colors that
  are not used in $B$. Finally, we can extend $\mathcal C$
  to a coloring of $G_\rho$ where $x'$ and $y'$ have the same color,
  by choosing this color among the $t-\alpha-\beta$ colors that
  are not used for the vertices in $A\cup B$.
  Summing over all $t$-colorings $\mathcal C$ of $G_{\pi-x-y}$
  yields
  \begin{align*}
    \chi_\rho(t)-\chi_\pi(t)&=\sum_{\mathcal C}(t-\alpha-\beta) \\
    &=\sum_{\mathcal C}(t-\beta)+\sum_{\mathcal C}(t-\alpha)-
\sum_{\mathcal C}t \\
    &=\chi_{\pi-x}(t)+\chi_{\pi-y}(t)-t\chi_{\pi-x-y}(t).
  \end{align*}
  Using that $\ao(G)=(-1)^n\chi_G(-1)$ for a graph $G$ with $n$ vertices,
  we finally obtain
  \begin{equation}\label{eq:heavy_ao_recurrence}
    \ao(\pi)=\ao(\rho)+\ao(\pi-x)+\ao(\pi-y)-\ao(\pi-x-y).
  \end{equation}
  The right-hand sides of equations~\ref{eq:heavy_br_recurrence}
  and~\ref{eq:heavy_ao_recurrence} are equal by the assumption in the lemma.
  Thus, $\br(\pi)=\ao(\pi)$ and we conclude that $\pi$ is
  chromobruhatic.
\end{proof}

Let $\pi\in\fs_n$ be any nonidentity permutation.
Then there is a pair of rooks $x,y$ that is the first descent of
$\pi$, i.e.~$x_i=\min\{i\,:\,i\pi<(i-1)\pi\}$ and $y_i=x_i-1$.
Analogously, let $\bar x,\bar y$ be the first descent of $\pi^{-1}$,
i.e.~$\bar x_j=\min\{j\,:\,j\pi^{-1}<(j-1)\pi^{-1}\}$ and
$\bar y_j=\bar x_j-1$.
\begin{proposition}\label{pr:reduction}
  For any nonidentity $\pi\in\hat{\fs}_n$,
  either $x,y$ is a reduction pair in $\pi$ or
  $\bar x,\bar y$ is a reduction pair in $\pi^{-1}$, or both.
\end{proposition}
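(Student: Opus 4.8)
The plan is to take the nonidentity permutation $\pi\in\hat{\fs}_n$, look at the first descent $x,y$ of $\pi$ (so $y_i=x_i-1$ and $x_j<y_j$; since it is the \emph{first} descent, every row above $x_i$ carries a rook whose column lies outside, but in fact the rows $1,\dots,x_i-1$ are occupied by rooks in increasing column order that are all to the left of $x$ or all to the right of $y$), and ask whether it is a reduction pair. If it is, we are done. If not, one of the two defining conditions for ``light reduction pair'' (and then also for ``heavy reduction pair'') must fail, and I would show that this failure forces a very specific configuration of obstructing rooks, which in turn will be used to locate a reduction pair in $\pi^{-1}$.

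First I would record what it means for $x,y$ \emph{not} to be a light reduction pair. One of the following holds: (i) there is a rook $a$ with $a_i<y_i$ and $a_j>y_j$ --- i.e.\ a rook strictly above and strictly right of $y$; or (ii) there is a rook $a$ with $a_i>x_i$ and $x_j<a_j<y_j$ --- a rook strictly below $x$ in a column strictly between those of $x$ and $y$. Because $x,y$ is the \emph{first} descent, the rows above are sorted, which already rules out various sub-cases and pins down where an obstruction of type (i) or (ii) can sit relative to the sorted prefix. The key claim I would then isolate is: under failure of the light conditions together with pattern-avoidance, the obstruction cannot be of type (i) at all, so it must be of type (ii), and moreover the ``lowest-rightmost'' such obstructing rook $a$ together with its neighbor in the diagram of $\pi^{-1}$ furnishes the descent $\bar x,\bar y$ of $\pi^{-1}$ that we want. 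Concretely, I expect to translate to the transposed diagram: a rook below $x$ between the columns of $x$ and $y$ becomes, in $\pi^{-1}$, a rook to the right of $\bar\jmath$ between the rows of the transposed $x$ and $y$, and the condition that $x,y$ is the first descent of $\pi$ becomes a statement about the first descent of $\pi^{-1}$.

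Then I would verify, case by case, that $\bar x,\bar y$ (the first descent of $\pi^{-1}$) is a reduction pair in $\pi^{-1}$: check both light conditions, and if a light condition fails, argue --- now using that $x,y$ \emph{was} forced to fail its light conditions, i.e.\ using the symmetric information on both sides --- that the heavy reduction pair conditions hold instead. The heavy conditions ask for the two corner regions near $\bar x$ and $\bar y$ to be empty and for the existence of a splitting column $j$ with two rectangles empty; the ``first descent'' property on the $\pi^{-1}$ side will supply the emptiness of the corner regions automatically, and the avoidance of $4231$, $35142$, $42513$, $351624$ will supply the splitting column. This is where pattern-avoidance does the real work: if no splitting column existed, one would extract a pair of rooks $a,b$ straddling every column between $\bar x$ and $\bar y$, and combined with $\bar x,\bar y$ (and with $x,y$ or its obstruction on the original side) this would spell out one of the four forbidden patterns.

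\textbf{The main obstacle} I anticipate is the bookkeeping in the last case: showing that the absence of a splitting column for the heavy condition, on \emph{whichever} of the two diagrams we land in, always produces one of the four specific forbidden patterns --- the patterns come in sizes $4$, $5$, $5$, $6$, and which one appears depends on how the obstructing rooks $a,b$ interleave with $x,y,\bar x,\bar y$ and with the sorted prefixes coming from the ``first descent'' hypothesis on both sides. I would organize this by first reducing, via Observation~\ref{obs:trans_and_rot}, to a canonical sub-case (so that I only have to exhibit, say, $4231$ or $35142$), then check the handful of remaining interleavings directly against Figure~\ref{fig:forbidden_patterns}. The rest --- that $x,y$ being the first descent kills type-(i) obstructions and forces the corner regions to be empty --- should be short and essentially combinatorial.
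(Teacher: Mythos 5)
Your high-level strategy is the same as the paper's: use the first-descent structure of $\pi$ and $\pi^{-1}$, note that one of the two light-pair conditions is forced automatically, and then let pattern-avoidance kill the remaining obstructions. Your key observation --- that because $x,y$ is the \emph{first} descent, the rows above form an increasing prefix whose maximum column is $y_j$, so no rook can sit strictly above and to the right of $y$ --- is exactly the hinge the paper also uses to guarantee the obstructing rook $a$ lies below $x$ between the columns of $x$ and $y$.

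However, there is a genuine gap in the next step. You claim that ``the `first descent' property on the $\pi^{-1}$ side will supply the emptiness of the corner regions automatically'' for the heavy reduction pair. That is false. A heavy reduction pair requires \emph{two} empty corners: no rook above-right of $y$ (this one is indeed automatic from first-descent) \emph{and} no rook below-left of $x$. The latter is not forced by $x,y$ being the first descent; it is vacuous only when $x$ sits in column $1$, i.e.\ $x=\bar z$ (respectively, when $\bar x$ sits in row $1$, i.e.\ $\bar x=z$, on the $\pi^{-1}$ side). The paper's proof is organized precisely around a four-way case split on whether $x=\bar z$ and whether $\bar x=z$, and it is this case split, together with the preliminary observations ``$\bar z$ is weakly below $x$'' and ``$z$ is weakly to the right of $\bar x$,'' that makes the corner-emptiness usable. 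Your plan skips this and would therefore break down whenever $x\neq\bar z$ and there is a rook below-left of $x$ (a configuration that does occur). Relatedly, the sentence asserting the prefix rooks are ``all to the left of $x$ or all to the right of $y$'' is wrong --- the prefix rooks merely have columns at most $y_j$, and several may sit strictly between the columns of $x$ and $y$; indeed such rooks are exactly where the obstruction $a$ lives. Finally, the orbit of the four patterns under transposition and rotation is $\{4231\}$, $\{35142,42513\}$, $\{351624\}$, so you cannot reduce to exhibiting only $4231$ or $35142$; the pattern $351624$ really does arise in the residual case $x=\bar z$, $\bar x=z$ and must be handled.
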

\begin{proof}
  We suppose neither of $x,y$ and $\bar x,\bar y$ is a reduction pair,
  and our goal is to find a forbidden pattern.

  If $\pi(1)=1$ it suffices to look at the rook configuration on
  the smaller board $[2,n]\times[2,n]$ since the pairs $x,y$ and 
  $\bar x,\bar y$
  on that board are not reduction pairs either.
  Thus, we may assume that $\pi(1)>1$.

  Let $z$ be the rook in row 1 and let $\bar z$ be the rook in column 1.
  From our assumption that $x,y$ is not a light reduction pair in $\pi$, and

  the fact that the rooks $x,y$ represent the first descent in $\pi$, 
  it follows that there is a rook $a$ in the
  region $A=[x_i+1,n]\times[x_j+1,y_j-1]$. If $x$ is in column 1, our
  assumption that $x,y$ is not a heavy reduction pair implies that
  $y\ne z$ and that there is a rook $b$ in the region
  $B=[x_i+1,n]\times[z_j+1,y_j-1]$,
  because $z$ is the leftmost rook in the rows above $y$. Analogously,
  since $\bar x,\bar y$ is not a reduction pair in $\pi^{-1}$,
  there is a rook $\bar a\in\bar 
  A=[\bar x_i+1,\bar y_i-1]\times[\bar x_j+1,n]$,
  and if $\bar x$ is in the first row, then $\bar y\ne\bar z$ and
  there is a rook
  $\bar b\in\bar B=[\bar z_i+1,\bar y_i-1]\times[\bar x_j+1,n]$.

  By the construction of $x$,
  all rooks in rows above of $x$ are weakly to the right of $z$,
  so $\bar z$ is weakly below $x$. Analogously, $z$ is weakly to the
  right of $\bar x$.
  This implies that either $\bar x$ is weakly below $x$, or $\bar x=z$,
  and analogously, either $x$ is weakly to the right of $\bar x$, or
  $x=\bar z$.

  {\bf Case 1: $x\ne\bar z$ and $\bar x\ne z$ as in Figure~\ref{fig:case1}.}
  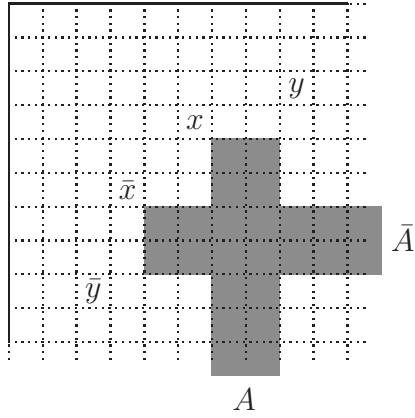
\begin{figure}
    \setlength{\unitlength}{0.45mm}
    \setlength{\fboxsep}{0.0mm}
    \begin{picture}(120,120)(10,-120)
      \put(40,-60){\makebox(10,10){$\bar x$}}
      \put(30,-90){\makebox(10,10){$\bar y$}}
      \put(60,-40){\makebox(10,10){$x$}}
      \put(90,-30){\makebox(10,10){$y$}}
      \put(50,-80){\colorbox{light}{\makebox(70,20){}}}
      \put(70,-110){\colorbox{light}{\makebox(20,70){}}}
      \put(70,-80){\colorbox{light}{\makebox(20,20){}}}
      \put(123,-73){$\bar A$}
      \put(76,-120){$A$}
      \drawline(10,-100)(10,0)(110,0)
      \multiput(10,0)(10,0){11}%
      {\begin{picture}(0,0)(0,0)\dottedline{2}(0,0)(0,-105)\end{picture}}
      \multiput(10,0)(0,-10){11}%
      {\begin{picture}(0,0)(0,0)\dottedline{2}(0,0)(105,0)\end{picture}}
    \end{picture}
    \caption{The situation of case 1. It is possible that $x=\bar x$.}
    \label{fig:case1}
  \end{figure}
  If $a$ is above $\bar y$, then the rooks $y,x,a,\bar y$
  form the forbidden pattern 4231.
  Analogously, if $\bar a$ is to the left of $y$,
  then the rooks $y,\bar x,\bar a,\bar y$
  form the forbidden pattern 4231. Finally, if $a$ is below $\bar y$
  and $\bar a$ is to the right of $y$, then
  the rooks $y,x,\bar a,\bar y,a$ form the forbidden pattern 42513.

  {\bf Case 2: $x=\bar z$ but $\bar x\ne z$ as in Figure~\ref{fig:case2}.}
  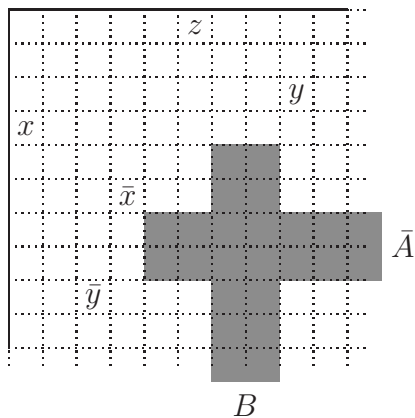
\begin{figure}
    \setlength{\unitlength}{0.45mm}
    \setlength{\fboxsep}{0.0mm}
    \begin{picture}(120,120)(10,-120)
      \put(40,-60){\makebox(10,10){$\bar x$}}
      \put(30,-90){\makebox(10,10){$\bar y$}}
      \put(10,-40){\makebox(10,10){$x$}}
      \put(60,-10){\makebox(10,10){$z$}}
      \put(90,-30){\makebox(10,10){$y$}}
      \put(50,-80){\colorbox{light}{\makebox(70,20){}}}
      \put(70,-110){\colorbox{light}{\makebox(20,70){}}}
      \put(70,-80){\colorbox{light}{\makebox(20,20){}}}
      \put(123,-73){$\bar A$}
      \put(76,-120){$B$}
      \drawline(10,-100)(10,0)(110,0)
      \multiput(10,0)(10,0){11}%
      {\begin{picture}(0,0)(0,0)\dottedline{2}(0,0)(0,-105)\end{picture}}
      \multiput(10,0)(0,-10){11}%
      {\begin{picture}(0,0)(0,0)\dottedline{2}(0,0)(105,0)\end{picture}}
    \end{picture}
    \caption{The situation of case 2. (If we transpose the diagram and
      let each letter change places with its barred variant, we obtain the
      situation of case 3.)}
    \label{fig:case2}
  \end{figure}
  As before, if $\bar a$ is to the left of $y$,
  then the rooks $y,\bar x,\bar a,\bar y$
  form the forbidden pattern 4231.
  If $b$ is above $\bar y$, then
  $z,y,x,b,\bar y$ form the pattern 35142. Finally, if
  $\bar a$ is to the right of $y$ and $b$ is below
  $\bar y$, then $y,\bar x,\bar a,\bar y,b$ form the pattern 42513.

  {\bf Case 3: $x\ne\bar z$ but $\bar x= z$.}
  This is just the ``transpose'' of case 2.

  {\bf Case 4: $x=\bar z$ and $\bar x=z$ as in Figure~\ref{fig:case4}.}
  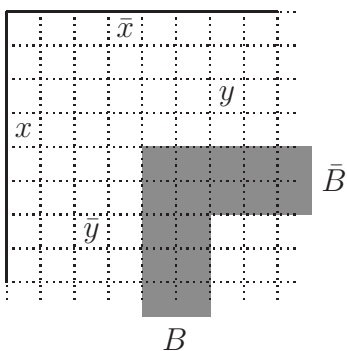
\begin{figure}
    \setlength{\unitlength}{0.45mm}
    \setlength{\fboxsep}{0.0mm}
    \begin{picture}(100,100)(10,-100)
      \put(30,-70){\makebox(10,10){$\bar y$}}
      \put(10,-40){\makebox(10,10){$x$}}
      \put(40,-10){\makebox(10,10){$\bar x$}}
      \put(70,-30){\makebox(10,10){$y$}}
      \put(50,-60){\colorbox{light}{\makebox(50,20){}}}
      \put(50,-90){\colorbox{light}{\makebox(20,50){}}}
      \put(50,-60){\colorbox{light}{\makebox(20,20){}}}
      \put(103,-53){$\bar B$}
      \put(56,-100){$B$}
      \drawline(10,-80)(10,0)(90,0)
      \multiput(10,0)(10,0){9}%
      {\begin{picture}(0,0)(0,0)\dottedline{2}(0,0)(0,-85)\end{picture}}
      \multiput(10,0)(0,-10){9}%
      {\begin{picture}(0,0)(0,0)\dottedline{2}(0,0)(85,0)\end{picture}}
    \end{picture}
    \caption{The situation of case 4.}
    \label{fig:case4}
  \end{figure}
  If there is a rook $\tilde b\in B\cap\bar B$, then
  $\bar x,y,x,\tilde b,\bar y$ form the pattern 35142.
  But if $b$ is below $\bar y$ and $\bar b$ is to the right
  of $y$, then $z,y,\bar z,\bar b,\bar y,b$ form the last forbidden
  pattern 351624.
\end{proof}
Combining Lemma~\ref{lem:inductivestep} and
Proposition~\ref{pr:reduction}, yields the
following two corollaries via induction.
\begin{corollary}\label{co:surjective}
  A permutation is chromobruhatic if it avoids
the patterns $4231$,
$35142$, $42513$ and $351624$.
\end{corollary}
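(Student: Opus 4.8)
The plan is to obtain Corollary~\ref{co:surjective} from Lemma~\ref{lem:inductivestep} and Proposition~\ref{pr:reduction} by a nested induction: an outer (strong) induction on $n$ and, for each fixed $n$, an inner induction along the Bruhat order on $\hat{\fs}_n$. Concretely, I would prove by induction on $n$ the assertion that every permutation in $\hat{\fs}_n$ is chromobruhatic. The base case $n=1$ is immediate, since $\hat{\fs}_1=\{e\}$ and $\br(e)=\ao(e)=1$. For the inductive step, fix $n\ge 2$ and assume that every permutation in $\hat{\fs}_m$ is chromobruhatic for all $m<n$; in particular, hypothesis~(b) of Lemma~\ref{lem:inductivestep} holds for every $\pi\in\hat{\fs}_n$.

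Within this fixed $n$, I would then show by induction on the length $\ell(\pi)$ that every $\pi\in\hat{\fs}_n$ is chromobruhatic; here length refines the Bruhat order, so $\rho<\pi$ forces $\ell(\rho)<\ell(\pi)$. The base case $\pi=e$ is again trivial. For nonidentity $\pi$, the inner induction hypothesis says precisely that every $\rho\in\hat{\fs}_n$ lying strictly below $\pi$ in the Bruhat order is chromobruhatic, which is exactly hypothesis~(a) of Lemma~\ref{lem:inductivestep}. It then remains only to produce a reduction pair in one of $\pi$, $\pi^{-1}$, $\pi^\crs$, $(\pi^\crs)^{-1}$. By Observation~\ref{obs:trans_and_rot}(c) we have $\pi^{-1}\in\hat{\fs}_n$, so Proposition~\ref{pr:reduction} applies and furnishes a reduction pair either in $\pi$ or in $\pi^{-1}$; since both of these permutations occur among the four allowed in Lemma~\ref{lem:inductivestep}, that lemma yields that $\pi$ is chromobruhatic. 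This closes the inner induction, hence the outer one, and the corollary follows.

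As for where the difficulty lies: essentially all of the substance has already been placed in Lemma~\ref{lem:inductivestep} and Proposition~\ref{pr:reduction}, so the only thing to get right in deducing the corollary is the organization of the two-layer induction — feeding hypothesis~(a) of the lemma from the Bruhat-order induction and hypothesis~(b) from the induction on $n$, and taking the outer induction to be strong so that the smaller permutations (of sizes $n-1$ and $n-2$) occurring in the recurrences inside the proof of the lemma are all already known to be chromobruhatic. I do not anticipate any genuine obstacle beyond this bookkeeping.
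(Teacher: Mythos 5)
Your proposal is correct and matches the paper's approach, which simply states that the corollary follows from Lemma~\ref{lem:inductivestep} and Proposition~\ref{pr:reduction} ``via induction.'' You have merely spelled out the two-layer induction (strong induction on $n$, then induction on length within $\hat{\fs}_n$) that the paper leaves implicit, including the slightly careful point that the heavy-pair recurrence drops down to $\fs_{n-2}$, which is why the outer induction must be strong.
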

Recall that the right and left weak orders on $\fs_n$ are defined
by $u\le_{R}w\Leftrightarrow\INV(u)\subseteq\INV(w)$ and
$u\le_{L}w\Leftrightarrow\INV(u^{-1})\subseteq\INV(w^{-1})$.
The \emph{two-sided weak order} is the transitive closure
of the union of the right and left weak orders.
\begin{corollary}
  Every chromobruhatic permutation is connected to the identity permutation
  via a saturated chain of chromobruhatic permutations in the two-sided 
  weak order.
\end{corollary}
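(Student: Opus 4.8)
The plan is to prove the following equivalent statement: every $\pi\in\hat{\fs}_n$ can be joined to $e$ by a chain $e=\pi_0,\pi_1,\dots,\pi_\ell=\pi$ in which, for each $i$, $\pi_i$ is covered by $\pi_{i+1}$ in the left or the right weak order, and each $\pi_i$ lies in $\hat{\fs}_n$. This suffices: by Theorem~\ref{th:Avoiding} and Corollary~\ref{co:surjective} the chromobruhatic permutations are exactly the members of $\hat{\fs}_n$; and a chain built from left- and right-weak covers is automatically a saturated chain of the two-sided weak order, because $\le_L$ and $\le_R$ are each graded by Coxeter length (and weakly monotone for it), so the two-sided weak order is graded by $\ell$ and its cover relations are precisely the left and right weak covers.

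The argument is an induction on $\ell(\pi)$; the case $\pi=e$ is trivial. Let $\pi\in\hat{\fs}_n$ be a nonidentity permutation. By Proposition~\ref{pr:reduction}, either the first descent $x,y$ of $\pi$ is a reduction pair in $\pi$, or the first descent $\bar x,\bar y$ of $\pi^{-1}$ is a reduction pair in $\pi^{-1}$. Consider the first case, and let $\rho$ be the permutation built in the proof of Lemma~\ref{lem:inductivestep} by pushing $x$ one step up and $y$ one step down. Since $y_i=x_i-1$, this merely interchanges the two rooks occupying the consecutive rows $y_i$ and $y_i+1$, i.e.\ $\rho=s_{y_i}\pi$, where $s_{y_i}$ is the adjacent transposition of those positions. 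As recorded in the proof of Lemma~\ref{lem:inductivestep}, $\rho\in\hat{\fs}_n$; and because $\pi$ has a descent at position $y_i$, this interchange destroys exactly one inversion, so $\ell(\rho)=\ell(\pi)-1$ and hence $\rho\lessdot_L\pi$ in the left weak order. In the second case, apply the same construction to $\pi^{-1}\in\hat{\fs}_n$ to obtain $\sigma\in\hat{\fs}_n$ with $\sigma\lessdot_L\pi^{-1}$ and $\ell(\sigma)=\ell(\pi)-1$; since inverting interchanges the left and right weak orders and preserves $\hat{\fs}_n$ by Observation~\ref{obs:trans_and_rot}(c), we get $\sigma^{-1}\in\hat{\fs}_n$ with $\sigma^{-1}\lessdot_R\pi$. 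Either way we have produced $\pi'\in\hat{\fs}_n$, with $\ell(\pi')=\ell(\pi)-1$, that is covered by $\pi$ in the left or the right weak order.

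By the induction hypothesis there is a chain of the required kind from $e$ to $\pi'$ inside $\hat{\fs}_n$; appending $\pi$ produces such a chain to $\pi$, completing the induction and hence the proof. I expect the only genuine care to be needed in the middle paragraph: one must verify that the ``move a rook up, move a rook down'' operation of Lemma~\ref{lem:inductivestep} is indeed a weak-order cover --- equivalently, that it deletes a single inversion of $\pi$ --- and one must pass from the $\pi^{-1}$-case back to $\pi$ by transposing rook diagrams via Observation~\ref{obs:trans_and_rot}. The remaining points, that $\hat{\fs}_n$ is stable under the construction and that the two-sided weak order is graded by $\ell$, are routine.
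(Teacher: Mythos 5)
Your proof is correct and follows the same route the paper intends: the paper derives this corollary by combining Proposition~\ref{pr:reduction} (existence of a first-descent reduction pair in $\pi$ or $\pi^{-1}$) with the construction of $\rho$ from the proof of Lemma~\ref{lem:inductivestep}, observing that $\rho$ (or its inverse) is a weak-order cover of $\pi$ that remains in $\hat{\fs}_n$, and inducting on length. You have supplied exactly these details, including the verification that swapping the two rooks of a descent pair deletes one inversion and hence gives a left- or right-weak cover, and the transfer between $\pi$ and $\pi^{-1}$ via Observation~\ref{obs:trans_and_rot}.
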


\section{Another characterization of permutations that avoid the four patterns}
\label{sec:axel}
In this section we demonstrate a feature of the injection $\phi:\CL
\to [e,w]$ that we call the ``going-down property''. As a
consequence, yet another characterization of permutations that avoid $4231$,
$35142$, $42513$ and $351624$ is deduced. It 
implies, in particular, that avoidance of these patterns is a
combinatorial property of the principal ideal a permutation generates
in the Bruhat order.

\begin{lemma}\label{le:down}
Let $(W,S)$ be any finitely generated Coxeter system. Suppose
$s_1\dotsm s_k$ is a reduced expression for $w\in W$. Define $t_i =
s_1\dotsm s_i\dotsm s_1\in T_w$.
Assume there exist $1\leq i_1<\dotsb <
i_m\leq k$ such that $t_{i_1}\dotsm t_{i_m}w = u$ and that the string
$(i_m, \dotsc, i_1)$ is lexicographically maximal with this
property (for fixed $m$ and $u$).
Then, $w>t_{i_m}w>t_{i_{m-1}}t_{i_m}w > \dotsb > t_{i_1}\dotsm
t_{i_m}w = u$. 
\begin{proof}
In order to arrive at a contradiction, let us assume $t_{i_j}\dotsm t_{i_m}w >
t_{i_{j+1}}\dotsm t_{i_m}w = b$. The strong exchange property
(Proposition \ref{pr:SEP}) implies that
an expression for $b$ can be obtained from $s_1\dotsm
\wh{s_{i_j}}\dotsm \wh{s_{i_m}}\dotsm s_k$ by deleting a letter
  $s_x$.

If $x<i_j$, then $t_{i_j}=t_x$ and $w = t_{i_j}^2w = s_1\dotsm
\wh{s_x}\dotsm \wh{s_{i_j}} \dotsm s_k$, contradicting the fact our 
original expression for $w$ is reduced.

Now suppose $x> i_j$; say $i_j\leq i_l<x<i_{l+1}$ (where we have
defined $i_{m+1}=k+1$). Hence, $u = t_{i_1}\dotsm
t_{i_{j-1}}t_{i_{j+1}}\dotsm t_{i_l}t_xt_{i_{l+1}}\dotsm
t_{i_m}w$. This, however, contradicts the maximality of $(i_1, \dotsc, i_m)$.
\end{proof}
\end{lemma}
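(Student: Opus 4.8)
The plan is to prove, by contradiction, that the asserted chain does not fail at any step, using the strong exchange property together with the lexicographic maximality hypothesis. First I would record an elementary \emph{subword formula}: for any increasing sequence $a_1 < a_2 < \dotsb < a_r$ in $[k]$,
\[
t_{a_1}t_{a_2}\dotsm t_{a_r}w \;=\; s_1\dotsm\wh{s_{a_1}}\dotsm\wh{s_{a_r}}\dotsm s_k,
\]
the word obtained from $s_1\dotsm s_k$ by deleting the letters in positions $a_1,\dotsc,a_r$ (an expression which need not be reduced); the proof is a straightforward induction, identical to the computation in the proof of Proposition~\ref{pr:map}. Now set $b_j = t_{i_{j+1}}\dotsm t_{i_m}w$, so that $b_m = w$, $b_0 = u$ and $b_{j-1} = t_{i_j}b_j$; the claimed chain is exactly $b_m > b_{m-1} > \dotsb > b_0$. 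By the subword formula, $b_{j-1} = s_1\dotsm\wh{s_{i_j}}\dotsm\wh{s_{i_m}}\dotsm s_k$. Since each $t_{i_j}$ is a reflection, $b_{j-1}$ and $b_j$ are comparable in the Bruhat order, with $b_{j-1}<b_j$ precisely when $\ell(b_{j-1})<\ell(b_j)$; so it suffices to show $\ell(t_{i_j}b_j)<\ell(b_j)$ for every $j\in[m]$.

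Suppose this fails for some $j$, i.e.\ $\ell(t_{i_j}b_j)>\ell(b_j)$. Writing $v=b_{j-1}=t_{i_j}b_j$, this says $\ell(t_{i_j}v)=\ell(b_j)<\ell(v)$, so the strong exchange property (Proposition~\ref{pr:SEP}), applied to the reflection $t_{i_j}$ and the (possibly non-reduced) expression for $v$ displayed above, shows that $b_j = t_{i_j}v$ is obtained from that expression by deleting one more letter $s_x$, where $x\in[k]\setminus\{i_j,\dotsc,i_m\}$. Hence $b_j$ is the word obtained from $s_1\dotsm s_k$ by deleting the positions in $\{x\}\cup\{i_j,\dotsc,i_m\}$. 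As $x\ne i_j$, either $x<i_j$ or $x>i_j$.

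If $x<i_j$, then $x,i_j,i_{j+1},\dotsc,i_m$ is increasing, so the subword formula gives $b_j=t_xt_{i_j}b_j$, hence $t_xt_{i_j}=e$; applying the formula once more, $w=t_xt_{i_j}w=s_1\dotsm\wh{s_x}\dotsm\wh{s_{i_j}}\dotsm s_k$ would be an expression for $w$ of length $k-2$, contradicting $\ell(w)=k$. If $x>i_j$, choose $l\in\{j,\dotsc,m\}$ with $i_l<x<i_{l+1}$, where $i_{m+1}:=k+1$; then $i_j,\dotsc,i_l,x,i_{l+1},\dotsc,i_m$ is increasing, so the subword formula gives $b_j=t_{i_j}\dotsm t_{i_l}\,t_x\,t_{i_{l+1}}\dotsm t_{i_m}w$. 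Substituting this into $u=t_{i_1}\dotsm t_{i_{j-1}}t_{i_j}b_j$ and cancelling the repeated $t_{i_j}$ yields
\[
u \;=\; t_{i_1}\dotsm t_{i_{j-1}}\,t_{i_{j+1}}\dotsm t_{i_l}\,t_x\,t_{i_{l+1}}\dotsm t_{i_m}\,w,
\]
an expression of $u$ as a product of $m$ reflections from $T_w=\{t_p\mid p\in[k]\}$ whose increasing string of indices is $(i_1,\dotsc,i_{j-1},i_{j+1},\dotsc,i_l,x,i_{l+1},\dotsc,i_m)$. Reversed, this string agrees with $(i_m,\dotsc,i_1)$ in its first $m-l$ entries (namely $i_m,i_{m-1},\dotsc,i_{l+1}$) and then carries $x$, whereas $(i_m,\dotsc,i_1)$ carries $i_l$ there; since $x>i_l$, the reversed new string is lexicographically larger, contradicting the maximality of $(i_m,\dotsc,i_1)$. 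In both cases we reach a contradiction, so $\ell(t_{i_j}b_j)<\ell(b_j)$ for every $j$, which is the assertion.

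The step I expect to be the main obstacle is this last interplay: one must present $b_{j-1}$ by a concrete --- not necessarily reduced --- word so that the strong exchange property becomes directly applicable (which is why I isolate the subword formula first), and then locate the exchanged position $x$ among the $i_\nu$ precisely enough to land in exactly one of the two contradictions --- reducedness of $s_1\dotsm s_k$ when $x<i_j$, and maximality of $(i_m,\dotsc,i_1)$ when $x>i_j$. The remaining manipulations are routine.
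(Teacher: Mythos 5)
Your proof is correct and follows the same strategy as the paper's: assume a step fails, apply the strong exchange property to the concrete subword expression for $t_{i_j}\dotsm t_{i_m}w$ to locate a deleted position $x$, and then split on whether $x<i_j$ (contradicting reducedness of $s_1\dotsm s_k$) or $x>i_j$ (contradicting lexicographic maximality). The only difference is that you make explicit the ``subword formula'' and the reversed-string comparison, both of which the paper uses implicitly.
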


\begin{proposition}[Going-down property of $\phi$]\label{pr:down}
Choose $C = \{\Hz = X_0 \lhd X_1 \lhd \dotsb \lhd X_m\}\in \CL$. Assume
$\lambda (X_{i-1}\lhd X_i) = H_{j_i}$ with corresponding reflection $t_{j_i}$. Then, $t_{j_i}\dotsm t_{j_m}w <
t_{j_{i+1}}\dotsm t_{j_m}w$ for all $i$.
\begin{proof}
Applying Lemma \ref{le:down}, it suffices to show that $(j_m,
\dotsc, j_1)$ is lexicographically maximal in the set $\{(p_m, \dotsc,
p_1)\in [k]^m \mid p_m>\dotsb > p_1\text{ and }t_{p_1}\dotsm
t_{p_m}=t_{j_1}\dots t_{j_m}\}$. Let us deduce a contradiction by assuming that $(j_m^\prime, \dotsc, j_1^\prime)$ is a
lexicographically larger sequence in this set. Suppose $i$
is the largest index for which $j_i\neq j_i^\prime$. We have
$H_{j_i^\prime} \supseteq X_i$, because,
by Proposition~\ref{pr:Carter} and Lemma~\ref{le:dimension},
$X_i$ is the fixed point space
of $t_{j_1}\dotsm t_{j_i} = t_{j_1^\prime}\dotsm t_{j_i^\prime}$ which
is an element of absolute length $i$. Observing that $j_i^\prime >
j_i$, i.e.\ $H_{j_i}>H_{j_i^\prime}$, the construction of $\lambda$
implies $\lambda(X_{\alpha-1}\lhd X_\alpha) \leq
H_{j^\prime_i}$ for some $\alpha\in [i]$. However, this contradicts
the fact that $\lambda(X_{\alpha-1}\lhd X_\alpha) \geq H_{j_i}$ for
all such $\alpha$. 
\end{proof}
\end{proposition}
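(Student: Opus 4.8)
The plan is to reduce the statement to Lemma~\ref{le:down}. Because $\ca_w$ is ordered by $H_1 > H_2 > \dots > H_k$, the hypothesis that $C$ is $\lambda$-decreasing says exactly that $j_1 < j_2 < \dots < j_m$. So if I can show that the reversed index string $(j_m, \dots, j_1)$ is lexicographically maximal in the set
\[
M = \{(p_m, \dots, p_1) \in [k]^m \mid p_m > \dots > p_1 \text{ and } t_{p_1}\dotsm t_{p_m} = t_{j_1}\dotsm t_{j_m}\},
\]
then Lemma~\ref{le:down}, applied with the reduced expression $s_1\dotsm s_k$ and $u = p(C)w$ (note that $t_{p_1}\dotsm t_{p_m}w=u$ is equivalent to $t_{p_1}\dotsm t_{p_m}=p(C)=t_{j_1}\dotsm t_{j_m}$), immediately delivers $w > t_{j_m}w > \dots > t_{j_1}\dotsm t_{j_m}w = u$, which is precisely the asserted going-down property.

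To prove this lexicographic maximality I would argue by contradiction. Suppose $(j'_m, \dots, j'_1) \in M$ is strictly larger, and let $i$ be the largest index at which the two strings disagree; lexicographic largeness then gives $j'_i > j_i$, and cancelling the common suffix $t_{j_{i+1}}\dotsm t_{j_m} = t_{j'_{i+1}}\dotsm t_{j'_m}$ leaves $t_{j_1}\dotsm t_{j_i} = t_{j'_1}\dotsm t_{j'_i}$. The substantive step is to recognise this common element as having fixed-point space exactly $X_i$: by Lemma~\ref{le:dimension} its absolute length is $i$, so by Proposition~\ref{pr:Carter} its fixed space $F$ has codimension $i$; since it is a product of reflections through $H_{j_1}, \dots, H_{j_i}$, all of which contain $X_i = H_{j_1}\cap \dots \cap H_{j_i}$, we get $X_i \subseteq F$, and $\codim X_i = i = \codim F$ forces $F = X_i$. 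But the same element is equally a product of reflections through $H_{j'_1}, \dots, H_{j'_i}$, so each of those hyperplanes contains $F = X_i$; in particular $H_{j'_i} \supseteq X_i$.

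The contradiction then comes from the construction of the standard labelling $\lambda$. Since $H_{j'_i}$ contains $X_i$ but not $\Hz$, there is a least $\alpha \in [i]$ with $H_{j'_i} \supseteq X_\alpha$; by minimality $H_{j'_i} \not\supseteq X_{\alpha-1}$, so $H_{j'_i}$ lies in the set of which $\lambda(X_{\alpha-1}\lhd X_\alpha)$ is the $\ca$-minimum, whence $H_{j_\alpha} = \lambda(X_{\alpha-1}\lhd X_\alpha) \leq H_{j'_i}$ in the chosen order on $\ca_w$, i.e.\ $j_\alpha \geq j'_i$. On the other hand $\alpha \leq i$, so the $\lambda$-decreasing property of $C$ gives $H_{j_\alpha} \geq H_{j_i}$, i.e.\ $j_\alpha \leq j_i$. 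Hence $j'_i \leq j_\alpha \leq j_i$, contradicting $j'_i > j_i$ and completing the proof.

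The genuine content lies in the identification $F = X_i$, which is where Lemma~\ref{le:dimension} and Carter's formula carry the argument; everything else is bookkeeping. The step I expect to be most error-prone is exactly that bookkeeping: one must carefully keep apart the reverse-inclusion order on $L_w$ and the chosen linear order $H_1 > \dots > H_k$ on $\ca_w$, and track the resulting inequalities on subscripts (larger subscript $\leftrightarrow$ smaller hyperplane $\leftrightarrow$ later in a $\lambda$-decreasing chain), so that the final chain $j'_i \leq j_\alpha \leq j_i$ really does contradict the assumed strict lexicographic inequality.
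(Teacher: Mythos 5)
Your proof is correct and follows essentially the same route as the paper's: the reduction to Lemma~\ref{le:down} via lexicographic maximality of $(j_m,\dotsc,j_1)$, the identification of $X_i$ as the fixed space of the common prefix product, and the clash between the standard labelling's minimality at the covering $X_{\alpha-1}\lhd X_\alpha$ and the $\lambda$-decreasing property of $C$ are all present in the paper in the same order. The one place you compress slightly is the inference ``so each of those hyperplanes contains $F = X_i$''; this requires the same codimension count you just ran for the $H_{j_\beta}$'s (namely $F\supseteq H_{j'_1}\cap\dotsb\cap H_{j'_i}$ together with $\codim F=i\ge\codim(H_{j'_1}\cap\dotsb\cap H_{j'_i})$ forcing equality), but the paper is equally terse at that step, so this is a matter of exposition rather than a gap.
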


Given $u\leq w \in W$, let $a\ell(u,w)$ denote the directed distance
from $u$ to $w$ in the directed graph (the {\em Bruhat graph}
\cite{dyer}) on $W$ whose edges are given by $x\to tx$ whenever $t\in
T$ and $\ell(x) < \ell(tx)$. Observe that $a\ell(u,w) \geq
\aell(uw^{-1})$ in general.

\begin{theorem} \label{th:characterization}
Let $w\in \Sn$. The following assertions are equivalent:
\begin{itemize}
\item $w$ avoids $4231$, $35142$, $42513$ and $351624$.
\item $\aell(uw^{-1}) = a\ell(u,w)$ for all $u < w$. 
\end{itemize}
\begin{proof}
If $w$ avoids the given patterns, $\phi$ is surjective. Proposition
\ref{pr:down} then shows that for any $u<w$ there is a directed path
of length $\aell(uw^{-1})$ from $u$ to $w$ in the Bruhat graph.

For the converse implication, suppose $w$ contains at least one of the
patterns. By 
the proof of Theorem \ref{th:Avoiding}, there exists some $u<w$
such that $uw^{-1}$ 
cannot be written as a product of $\aell(uw^{-1})$ inversions of
$w$. On the other hand, whenever there is a directed path from $u$ to
$w$ of length 
$p$, then $uw^{-1}$ can be written as a product of $p$ inversions of
$w$ (this follows from the strong exchange property). Hence,
$a\ell(u,w) > \aell(uw^{-1})$. 
\end{proof}
\end{theorem}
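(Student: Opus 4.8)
The plan is to prove the two implications of the equivalence separately; in both directions almost all the work has already been carried out, so the argument is mostly a matter of assembling earlier results. For the implication that avoidance forces $\aell(uw^{-1})=a\ell(u,w)$, I would fix $u<w$ and use that, since $w$ avoids the four patterns, the map $\phi$ is surjective (Theorem~\ref{th:main}). Thus there is a chain $C=\{\Hz=X_0\lhd\dotsb\lhd X_m\}\in\CL$ with $\phi(C)=p(C)w=u$, so $p(C)=uw^{-1}$, and Lemma~\ref{le:dimension} gives $\aell(uw^{-1})=\aell(p(C))=m$. Writing $\lambda(X_{i-1}\lhd X_i)=H_{j_i}$ with corresponding reflection $t_{j_i}$, the going-down property (Proposition~\ref{pr:down}) asserts $t_{j_i}\dotsm t_{j_m}w<t_{j_{i+1}}\dotsm t_{j_m}w$ for every $i$, i.e.\ we have the chain
\[
u=t_{j_1}\dotsm t_{j_m}w<t_{j_2}\dotsm t_{j_m}w<\dotsb<t_{j_m}w<w
\]
in which each term arises from its predecessor by left multiplication by a reflection that strictly increases the Coxeter length (Bruhat order refines length). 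Hence this is a directed path of length $m$ from $u$ to $w$ in the Bruhat graph, so $a\ell(u,w)\le m=\aell(uw^{-1})$; together with the general inequality $a\ell(u,w)\ge\aell(uw^{-1})$ noted before the theorem, this gives equality for every $u<w$.

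For the converse I would argue contrapositively. If $w$ contains one of the four patterns, the explicit construction in the proof of Theorem~\ref{th:Avoiding} supplies a permutation $u<w$ such that $uw^{-1}$ \emph{cannot} be written as a product of $\aell(uw^{-1})$ inversions of $w$. On the other hand, a directed path of length $p$ from $u$ to $w$ expresses $uw^{-1}$ as the product of the $p$ reflections labelling its edges (so $\aell(uw^{-1})\le p$ in any case), and in fact such a path forces $uw^{-1}$ to be a product of $p$ \emph{elements of $T_w$}. Granting this, no directed path from $u$ to $w$ can have length $\aell(uw^{-1})$, so every such path has length strictly greater than $\aell(uw^{-1})$, i.e.\ $a\ell(u,w)>\aell(uw^{-1})$; this breaks the second assertion, completing the contrapositive.

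The hard part will be justifying the claim that a length-$p$ directed path from $u$ to $w$ realizes $uw^{-1}$ as a product of $p$ inversions of $w$. The approach I would take: fix a reduced word $s_1\dotsm s_k$ for $w$ and apply the strong exchange property (Proposition~\ref{pr:SEP}, in its form for arbitrary, not necessarily reduced, words) once per edge of the path, deleting one letter each time, so that $u$ appears as a subword of $s_1\dotsm s_k$ with $p$ letters removed; a telescoping computation, deleting those letters in the appropriate (smallest-position-last) order, then writes $uw^{-1}$ as a product of $p$ of the standard reflections $t_i\in T_w$. This also yields the inequality $a\ell(u,w)\ge\aell(uw^{-1})$ used above, since any directed path produces \emph{some} factorization into reflections and $\aell$ is the minimum length of a reflection factorization. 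The point of care is that one should \emph{not} try to peel off a single edge and induct naively, because the reflection labelling an edge of the Bruhat graph need not itself be an inversion of $w$ (a small $\fs_3$ example already exhibits this) --- the induction has to be run on the whole word rather than edge by edge. Once this lemma is in place, the theorem follows by combining Theorem~\ref{th:main}, Proposition~\ref{pr:down}, Lemma~\ref{le:dimension}, and Theorem~\ref{th:Avoiding}.
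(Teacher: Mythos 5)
Your argument is correct and follows the paper's proof in both directions: forward via surjectivity of $\phi$ (Theorem~\ref{th:main}), Lemma~\ref{le:dimension}, and the going-down property (Proposition~\ref{pr:down}); converse via the explicit elements constructed in the proof of Theorem~\ref{th:Avoiding} together with the strong exchange property. The paper dispatches the strong-exchange step in a parenthetical remark, and your expansion of it is the right one: iterating strong exchange along the path exhibits $u$ as $s_1\dotsm s_k$ with $p$ letters deleted at positions $a_1<\dotsb<a_p$, and the telescoping identity $t_{a_1}\dotsm t_{a_p}w = \prod_{i\notin\{a_1,\dotsc,a_p\}}s_i$ (already used in the proof of Proposition~\ref{pr:map}) then gives $uw^{-1}$ as a product of $p$ elements of $T_w$, even though the individual reflections labelling the path's edges need not lie in $T_w$ --- a caveat you were right to flag.
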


\begin{corollary}
Suppose $w_1\in \Sn$ avoids $4231$, $35142$, $42513$ and $351624$
whereas $w_2\in \Sn$ does not. Then, $[e,w_1]\not \cong [e,w_2]$ as posets.
\begin{proof}
Let $u\leq w \in W$. Denote by $\BG(u,w)$ the subgraph of the
Bruhat graph on $W$ induced by the elements in the Bruhat interval $[u,w]$. 
It is known \cite[Proposition 3.3]{dyer} that the isomorphism type
of $[u,w]$ determines the isomorphism type of $\BG(u,w)$.

Now suppose $w\in \Sn$ contains one of the four patterns. In the
proof of Theorem 
\ref{th:Avoiding}, we produced elements $u < w$ such
that $uw^{-1}$ cannot be written as a product of $\aell(uw^{-1})$
inversions of $w$. A closer examination of these elements reveals
that, for each such $u$, there is a transposition $t$ such that $tu<u$
and $\aell(tuw^{-1})=a\ell(tu,w) = \aell(uw^{-1})-1$.\footnote{For example, if
  the pattern $42513$ occurs in positions $n_1, \dotsc, n_5$, we have
  $uw^{-1}=(n_2\, n_5\, n_3)(n_1\, n_4)$. Observe that $n_2u = n_5w$ and
  $n_3u = n_2w$. Hence, $t=(n_2\, n_3)$ with $(n_2,n_3) \in \INV(u)$. Now, $tuw^{-1} =
  (n_3\, n_5)(n_1\, n_4)$, $\aell(tuw^{-1})=2$ and $tu \to (n_3\, n_5)tu
  \to (n_1\, n_4)(n_3\, n_5)tu = w$ is a directed path in $\BG(e,w)$ of
  length $2$. The remaining three cases are similar.} Thus, $\BG(e,w)$
contains an undirected path from $u$ to $w$ of length
$\aell(uw^{-1})$. Therefore, it is possible to
determine from the combinatorial type of $[e,w]$ that it
contains an element $u$ with $a\ell(u,w)>\aell(uw^{-1})$. 
\end{proof} 
\end{corollary}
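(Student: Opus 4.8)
The plan is to exploit the two invariants that distinguish the Bruhat intervals, namely the parameters $\aell(uw^{-1})$ and $a\ell(u,w)$ attached to each $u\in[e,w]$. By Theorem~\ref{th:characterization}, $w$ avoids the four forbidden patterns precisely when $\aell(uw^{-1})=a\ell(u,w)$ for every $u<w$, so $[e,w_1]$ and $[e,w_2]$ differ in this respect: there is some $u<w_2$ with $a\ell(u,w_2)>\aell(uw_2^{-1})$ while no such element exists below $w_1$. The obstacle to concluding immediately is that these quantities are \emph{a priori} defined in terms of the Bruhat graph $\BG(e,w)$ and the group structure of $\Sn$, not in terms of the abstract poset $[e,w]$. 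So the real content of the proof is to argue that the discrepancy $a\ell(u,w)>\aell(uw^{-1})$ can be detected purely combinatorially from the isomorphism type of the interval.

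First I would recall Dyer's result \cite[Proposition 3.3]{dyer} that the isomorphism type of a Bruhat interval $[u,w]$ determines the isomorphism type of the induced Bruhat graph $\BG(u,w)$, viewed as an (undirected, or directed-up-to-global-reversal) graph. In particular, from the abstract poset $[e,w]$ one can reconstruct $\BG(e,w)$ as a graph, and for any two elements $x\le y$ in the interval one can compute the graph distance $d_{\BG}(x,y)$ between them inside $\BG(e,w)$. So it suffices to characterize, in graph-theoretic terms, the existence of an element $u$ exhibiting the ``defect'' $a\ell(u,w)>\aell(uw^{-1})$.

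The key step is the footnote's observation, which I would promote to the main line of the argument: for each of the four pattern cases, the element $u<w$ produced in the proof of Theorem~\ref{th:Avoiding} admits a transposition $t$ with $tu<u$, $(n_\ast,n_\ast)\in\INV(u)$, $\aell(tuw^{-1})=\aell(uw^{-1})-1$, and a directed path from $tu$ to $w$ in $\BG(e,w)$ of length $\aell(tuw^{-1})$. Concatenating the edge $u\to$(no, $tu\to u$ is the wrong direction)—rather, $u$ and $tu$ are Bruhat-comparable so $\{u,tu\}$ is an edge of $\BG(e,w)$, and appending it to the directed path $tu\to\dots\to w$ produces an \emph{undirected} path in $\BG(e,w)$ from $u$ to $w$ of length $\aell(uw^{-1})$. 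On the other hand, there is no \emph{directed} path of that length (that was the point of Theorem~\ref{th:Avoiding}), so in $\BG(e,w_2)$ there exists a pair of elements $u\le w_2$ with $d_{\BG}(u,w_2)\le\aell(uw_2^{-1})$ yet the up-distance $a\ell(u,w_2)$ strictly larger; whereas in $\BG(e,w_1)$ every $u$ has $d_{\BG}(u,w_1)=a\ell(u,w_1)=\aell(uw_1^{-1})$. One must check that ``$\aell(uw^{-1})$'' and ``$a\ell(u,w)$'' are themselves recoverable from $\BG(e,w)$: the latter is just the up-distance in the directed graph (and the global up/down ambiguity is resolved by the poset structure, since $[e,w]$ remembers which vertex is $\hat 0$ and which is $\hat 1$), and the former equals the minimal length of an undirected path from $u$ to $w$ in $\BG(e,w)$ — this last equality is a consequence of the strong exchange property (Proposition~\ref{pr:SEP}), which tells us an undirected path of length $p$ from $u$ to $w$ yields an expression of $uw^{-1}$ as a product of $p$ reflections, all of which can be taken to lie in $T_w$ after absorbing $w^{-1}$, hence $\aell(uw^{-1})\le p$, and conversely a reduced reflection word gives a path of the right length. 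Putting these identifications together, the existence of an element with $a\ell(u,w)>\aell(uw^{-1})$ — equivalently with the undirected $\BG$-distance to $w$ strictly smaller than the directed one — is a property of $[e,w]$ alone, so $[e,w_1]\not\cong[e,w_2]$.

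The main obstacle I anticipate is the bookkeeping in the footnote's claim: verifying in each of the four cases that the transposition $t$ exists with all the stated properties (in particular that $tu<u$ in Bruhat order and that the shortened directed path to $w$ has exactly length $\aell(uw^{-1})-1$). The $42513$ case is worked out explicitly; the remaining three require the analogous but slightly fiddly check that the specific cycle $uw^{-1}$ has a reflection $t$ stripping off one factor while keeping the relevant positions an inversion of $u$, together with exhibiting the short directed path. This is routine case analysis using Proposition~\ref{pr:standardcrit} and Proposition~\ref{pr:SEP}, but it is where all the real verification lives.
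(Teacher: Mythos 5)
Your proposal follows essentially the same route as the paper: invoke Dyer's result that the isomorphism type of $[e,w]$ determines that of $\BG(e,w)$, then use the footnote's transposition $t$ to exhibit a short undirected path from the offending $u$ to $w$, so that the existence of a vertex whose undirected $\BG$-distance to the top is strictly smaller than its directed distance becomes a poset invariant separating $w_2$ from $w_1$ (the latter handled via Theorem~\ref{th:characterization}). One small caution: your claim that $\aell(uw^{-1})$ always \emph{equals} the undirected distance in $\BG(e,w)$ is stronger than what is justified or needed --- the converse direction of your argument (a reduced reflection factorization of $uw^{-1}$ yields a path) produces a path in the Bruhat graph of $W$ that need not stay inside $[e,w]$; what is actually used is only the one-sided inequality $\aell(uw^{-1})\le$ undirected distance, together with the explicit short path from the footnote, exactly as in the paper.
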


\section{Inequality of Betti numbers} \label{S:Betti}

In this section we use the bijection $\phi$ to derive, for $w\in \hat{\fs}_n$, 
inequalities relating the ranks of the cohomology groups of the
complexified hyperplane arrangement $\rcca_w$ and the closure of the
cell corresponding to $w$ in the Bruhat decomposition of the flag manifold.

Let $B$ be a Borel subgroup of $G=GL_n({\mathbb C})$.  The Schubert cells
(or Bruhat cells) ${BwB/B}$ ($w \in S_n$) determine a cell decomposition of the 
complex flag manifold $G/B$.  The closure of each such cell
admits a regular decomposition into cells indexed by permutations in
the Bruhat interval $[e,w]$, that is, $\overline{BwB/B}=
\cup_{\pi\le w} B\pi B/B$. All Schubert cells are even-dimensional.  It follows that 
$\sum_{i}\beta^{2i}(\overline{BwB/B})q^i=\sum_{\pi\le w} q^{\ell(\pi)}$.
That is, $\beta^{2i}(\overline{BwB/B})$ counts the number of elements
$u\in [e,w]$ with $\ell(u)=i$. This is well known, see for instance 
\cite{borel, fulton, GR}. 

The linear equations determining the hyperplanes in an arrangement $\ca$ in $\rr^n$ also define hyperplanes in $\mathbb{C}^n$. These complex hyperplanes 
  yield the {\em  complexified} arrangement $\cca$.

For the complexified hyperplane arrangement $\rcca_w$ we have the Orlik-Solomon formula for the Betti numbers of the complement of
a complex hyperplane arrangement,
\[\beta^{i}({\mathbb C}^{n} \setminus \cup \rcca_w)=
\sum_{x\in L_w: \text{rank}(x)=i}|\mu(\hat 0,x)|.\]
See \cite{Bj92} for background on subspace 
arrangements. As noted above, the theory for 
lexicographic shellability of posets \cite{bjorner} says that 
$|\mu(\hat 0,x)|$ is the number of descending saturated 
chains in the EL-labeling $\lambda$ 
starting at $\hat 0$ and ending at $x$.

\begin{proposition}\label{pr:betprop}
  For any permutation $w\in \Sn$ that avoids the patterns
  4231, 35142, 42513, and 351624, we have for $r\ge 0$ that
  \begin{enumerate}
  \item{}$
    \sum_{i=0}^r\beta^{2(\ell(w)-i)}(\overline{BwB/B})\le
    \sum_{i=0}^r \beta^{i}({\mathbb C}^n \setminus \rcca_w),$
  \item{}$\sum_{j=0}^r\beta^{2(\ell(w)-2j)}(\overline{BwB/B})\le
    \sum_{j=0}^r \beta^{2j}({\mathbb C}^{n} \setminus \rcca_w)$ and
  \item{}$\sum_{j=0}^r\beta^{2(\ell(w)-2j-1)}(\overline{BwB/B})\le
    \sum_{j=0}^r \beta^{2j+1}({\mathbb C}^{n} \setminus \rcca_w)$.
  \end{enumerate}
  When $r$ is maximal, that is, when the sum is taken over all non-zero Betti numbers,
  we have equality. This occurs when $r=\ell(w)$, $r=\lfloor\ell(w)/2\rfloor$ 
  and $r=\lfloor(\ell(w)-1)/2\rfloor$, respectively.
\end{proposition}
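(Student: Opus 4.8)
The plan is to transport everything through the bijection $\phi:\CL\to[e,w]$, which is available because $w$ avoids the four patterns (Theorem~\ref{th:injective} together with Corollary~\ref{co:surjective}). First I would rewrite all six partial sums as cardinalities. Since $\beta^{2i}(\overline{BwB/B})$ is the number of $u\le w$ with $\ell(u)=i$, the left-hand sides of (1), (2), (3) are, respectively, $|\{u\le w\mid\ell(w)-\ell(u)\le r\}|$, $|\{u\le w\mid\ell(w)-\ell(u)\text{ even and }\le 2r\}|$ and $|\{u\le w\mid\ell(w)-\ell(u)\text{ odd and }\le 2r+1\}|$. On the arrangement side, the Orlik--Solomon formula and the theory of EL-shellability identify $\beta^i(\mathbb{C}^n\setminus\rcca_w)$ with the number of $\lambda$-decreasing saturated chains from $\Hz$ to an element of rank $i$ in $L_w$; and for $C=\{\Hz=X_0\lhd\dotsb\lhd X_m\}\in\CL$ the proof of Theorem~\ref{th:injective}, Lemma~\ref{le:dimension} and Proposition~\ref{pr:Carter} give $\mathrm{rank}(X_m)=\aell(p(C))=m$. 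Writing $m(C)$ for the length of $C$, it follows that the right-hand sides of (1), (2), (3) equal $|\{C\in\CL\mid m(C)\le r\}|$, $|\{C\in\CL\mid m(C)\text{ even and }\le 2r\}|$ and $|\{C\in\CL\mid m(C)\text{ odd and }\le 2r+1\}|$.

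The heart of the argument is a comparison, via $\phi$, of the grading of $\CL$ by $m(C)$ with the grading of $[e,w]$ by $\ell(w)-\ell(u)$. I claim that for every $C\in\CL$,
\[
m(C)\le \ell(w)-\ell(\phi(C))\qquad\text{and}\qquad m(C)\equiv \ell(w)-\ell(\phi(C))\pmod 2.
\]
The inequality is immediate from Proposition~\ref{pr:map}: $\phi(C)=p(C)w$ is represented by the subword of the fixed reduced expression for $w$ obtained by deleting the $m(C)$ letters $s_{j_1},\dotsc,s_{j_{m(C)}}$, so $\ell(\phi(C))\le\ell(w)-m(C)$. For the congruence, observe that $p(C)=\phi(C)w^{-1}$ and that in $\Sn$ the sign of a permutation $\sigma$ equals both $(-1)^{\aell(\sigma)}$ and $(-1)^{\ell(\sigma)}$; since $m(C)=\aell(p(C))$ by Lemma~\ref{le:dimension}, this yields $(-1)^{m(C)}=\mathrm{sgn}(\phi(C))\,\mathrm{sgn}(w)=(-1)^{\ell(w)-\ell(\phi(C))}$.

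Granting the claim, each of (1), (2), (3) follows from a one-line set-theoretic estimate using that $\phi$ is a bijection. For (1): if $u\le w$ satisfies $\ell(w)-\ell(u)\le r$, then $C=\phi^{-1}(u)$ satisfies $m(C)\le\ell(w)-\ell(u)\le r$, so $\phi^{-1}$ injects $\{u\le w\mid\ell(w)-\ell(u)\le r\}$ into $\{C\in\CL\mid m(C)\le r\}$; comparing cardinalities gives (1). For (2) and (3) the argument is the same, using in addition the congruence to carry over the parity constraint: if $\ell(w)-\ell(u)$ is even (resp.\ odd) and bounded by $2r$ (resp.\ $2r+1$), then $m(\phi^{-1}(u))$ has the same parity and satisfies the same bound.

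For the equality assertions, recall that $|\CL|=\cR(w)=\ao(w)$ (Proposition~\ref{pr:regions} and Section~\ref{sec:chromo}), that $|[e,w]|=\br(w)$, and that $\br(w)=\ao(w)$ (Corollary~\ref{co:surjective}). When $r=\ell(w)$, the left-hand side of (1) is $\br(w)$, and since every $C\in\CL$ has $m(C)\le\ell(w)$ by the claim, the right-hand side is $|\CL|=\ao(w)$; the two coincide, and both sides have exhausted their nonzero terms (the remaining cohomological degrees on the left are negative, and $m(C)\le\ell(w)$ forces $\beta^i(\mathbb{C}^n\setminus\rcca_w)=0$ for $i>\ell(w)$). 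For (2) with $r=\lfloor\ell(w)/2\rfloor$ (resp.\ (3) with $r=\lfloor(\ell(w)-1)/2\rfloor$) the left-hand side now counts all $u\le w$ with $\ell(w)-\ell(u)$ even (resp.\ odd), equivalently all $u\le w$ with $\mathrm{sgn}(uw^{-1})=+1$ (resp.\ $-1$), and by the congruence $\phi$ matches these bijectively with the $C\in\CL$ for which $m(C)$ is even (resp.\ odd) --- which is precisely the right-hand side. I do not anticipate a real obstacle: the proposition is a bookkeeping consequence of the bijection $\phi$, Proposition~\ref{pr:map}, and the sign identity. The one point that needs care is the \emph{direction} of the majorization --- it is the chain-length filtration of $\CL$ that dominates the filtration of $[e,w]$ by $\ell(w)-\ell(u)$, which is exactly why the Schubert-side partial sums land on the smaller side of each inequality, together with correctly pairing the three parity-refined statements with the three sums.
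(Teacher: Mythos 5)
Your proof is correct and follows essentially the same route as the paper's: rewrite both sides as cardinalities (via the Schubert cell decomposition on one side and Orlik--Solomon together with EL-shellability on the other), use $\ell(\phi(C))\le\ell(w)-m(C)$ from Proposition~\ref{pr:map} and Lemma~\ref{le:dimension}, obtain the parity refinement from the fact that a transposition changes length by an odd amount, and invoke bijectivity of $\phi$ (Theorem~\ref{th:main}) for the maximal-$r$ equalities. One place where you are slightly more careful than the paper's terse argument is in noting that the inequalities themselves really use \emph{surjectivity} of $\phi$ (so that every $u$ with $\ell(w)-\ell(u)\le r$ has a preimage chain of length $\le r$), not merely the injectivity that the paper's one-line justification cites.
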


\begin{proof}
  We use the notation introduced in Section \ref{sec:inj}.  Let $s_1\ldots s_k$ be a reduced expression for $w$.  The right hand side in (1) counts chains $C\in\CL$ 
  of length at most $r$. Each such chain of length $i$
  gives a word $p(C)$ of length $i$ in the alphabet $t_1,\ldots,t_k$. By Lemma \ref {le:dimension}
  we have $\aell(p(C))=i$ and thus $\ell(\phi(C))\le\ell(w)-i$. By Theorem
  \ref{th:injective} $\phi$ is injective and the inequality follows.

  Since multiplication by a transposition $t_j$ always changes the 
  length of $w \in S_n$ by an odd number, the other two inequalities follow.

  The map $\phi$ is by Theorem \ref{th:main} a bijection between chains with 
  descending labels and elements in the Bruhat interval $[e,w]$ which gives
  equality of the number of plausible words in the $t_j$s and $s_j$s
  respectively.
\end{proof}
                               
Note that these inequalities are not true in general for permutations
not avoiding the four patterns.
In fact, if $w\notin \hat{\fs}_n$ and $r=\ell(w)$ we know by
Theorem~\ref{th:Avoiding}
that the inequality (1) does not hold.

\section{Chromatic polynomials and smooth permutations}
\label{sec:chromo}
Recall the directed distance $a\ell(u,w)$ defined prior to Theorem \ref{th:characterization}. In this section we will use the injective map $\phi$ from
Proposition~\ref{pr:map} to show that the chromatic polynomial
$\chi_{G_w}(t)$ of the inversion graph $G_w$ of $w\in\hat{\fs}_n$
keeps track of the transposition distance $a\ell(u,w)$ of
elements $u\in[e,w]$. We follow
Postnikov and sometimes call a permutation \emph{chromobruhatic}
if it avoids the four forbidden patterns.
\begin{theorem}\label{th:chromatic}
For any permutation $w\in\fs_n$, the polynomial
identity
\[
\sum_{u\in[e,w]}q^{a\ell(u,w)}=(-q)^n \chi_{G_w}(-q^{-1}),
\]
holds if and only if $w$ avoids the patterns
4231, 35142, 42513 and 351624.
\end{theorem}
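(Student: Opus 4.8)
The plan is to derive the identity from the work already done, treating the two directions separately. For the forward direction, assume $w$ avoids the four patterns. By a standard fact (used already in the proof of Lemma~\ref{lem:inductivestep}), for any graph $G$ on $n$ vertices one has $(-q)^n\chi_G(-q^{-1}) = \sum_{j\ge 0} a_j(G)\, q^j$, where $a_j(G)$ counts the number of $j$-element subsets $F$ of the edge set such that the spanning subgraph $(V,F)$ has no broken circuit — equivalently, by Whitney's theorem, $(-q)^n\chi_G(-q^{-1})$ is the generating function $\sum_F (-1)^{|F|}(-q)^{n-c(F)}$ suitably normalized. The cleanest route, however, is to recognize the right-hand side as the generating function counting, by rank, the elements of the intersection lattice $L_w$ weighted by $|\mu(\Hz,\cdot)|$: indeed for the inversion graph $G_w$ we have the well-known identity $\chi_{G_w}(t) = \sum_{x\in L_w} \mu(\Hz,x)\, t^{\dim x}$, and since $\dim x = n - \mathrm{rank}(x)$, multiplying through gives
\[
(-q)^n\chi_{G_w}(-q^{-1}) = \sum_{x\in L_w} |\mu(\Hz,x)|\, q^{\mathrm{rank}(x)}.
\]
By Proposition~\ref{pr:regions} (more precisely its proof via EL-shellability), $|\mu(\Hz,x)|$ equals the number of $\lambda$-decreasing saturated chains from $\Hz$ to $x$, so the right-hand side is exactly $\sum_{C\in\CL} q^{\mathrm{rank}(\max C)} = \sum_{C\in\CL} q^{m(C)}$ where $m(C)$ is the length of $C$, using Lemma~\ref{le:dimension} to identify $\mathrm{rank}(\max C)$ with $m(C)$.

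Now I invoke the going-down property. By Theorem~\ref{th:main}, since $w$ avoids the patterns, $\phi:\CL\to[e,w]$ is a bijection, and by Proposition~\ref{pr:down} each chain $C$ of length $m$ gives a directed path $w > t_{j_m}w > \dots > t_{j_1}\dots t_{j_m}w = \phi(C)$ in the Bruhat graph, so $a\ell(\phi(C),w) \le m$. On the other hand $a\ell(\phi(C),w)\ge \aell(\phi(C)w^{-1}) = \aell(p(C)) = m$ by Lemma~\ref{le:dimension} and the remark preceding Theorem~\ref{th:characterization}. Hence $a\ell(\phi(C),w) = m(C)$ for every $C$, and since $\phi$ is a bijection onto $[e,w]$,
\[
\sum_{u\in[e,w]} q^{a\ell(u,w)} = \sum_{C\in\CL} q^{m(C)} = (-q)^n\chi_{G_w}(-q^{-1}),
\]
which is the desired identity.

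For the converse, suppose the polynomial identity holds; I must show $w$ avoids the four patterns. Setting $q=1$ collapses the left-hand side to $\br(w)$ and the right-hand side to $(-1)^n\chi_{G_w}(-1) = \ao(G_w) = \cR(w)$; thus $\br(w)=\cR(w)$, i.e.\ $w$ is chromobruhatic, and Theorem~\ref{th:Avoiding} (the necessity direction) forces $w$ to avoid all four patterns. I expect the main obstacle — really the only delicate point — to be pinning down the precise combinatorial identity $\chi_{G_w}(t) = \sum_{x\in L_w}\mu(\Hz,x)t^{\dim x}$ and matching the grading conventions ($\dim$ versus $\mathrm{rank}$, and the sign bookkeeping in $(-q)^n\chi_{G_w}(-q^{-1})$) so that the exponent of $q$ coming out of the Orlik–Solomon/Möbius side really is the chain length $m(C)$ rather than $n-m(C)$; once that is set up correctly, everything else is an assembly of Theorem~\ref{th:main}, Proposition~\ref{pr:down}, Lemma~\ref{le:dimension}, and Theorem~\ref{th:Avoiding}. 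A secondary point worth a sentence is confirming that equality $a\ell(u,w)=\aell(uw^{-1})$ holding for \emph{all} $u<w$ (which is exactly Theorem~\ref{th:characterization}) is what makes the exponents on the two sides agree term-by-term, not merely in aggregate.
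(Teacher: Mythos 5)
Your proof is correct and follows essentially the same route as the paper's: expand $\chi_{G_w}$ over the intersection lattice, identify $|\mu(\Hz,X)|$ with counts of $\lambda$-decreasing chains, use the bijectivity of $\phi$ together with Lemma~\ref{le:dimension} and the going-down property (which you inline rather than quoting Theorem~\ref{th:characterization} directly) to match exponents, and for the converse specialize at $q=1$ (equivalently $t=-1$ in the paper) to reduce to the already-proved statement (B). The bookkeeping you flagged as a worry is handled correctly.
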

\begin{proof}
It is well-known (see e.g.~\cite{stanley}) that
\[
\chi_{G_w}(t)=\sum_{X\in L_w}\mu(X)t^{\dim X}=
\sum_{X\in L_w}(-1)^{\codim X}|\mu(X)|t^{\dim X}.
\]
Lemma~\ref{le:dimension} implies that
if $u=\phi(\Hz=X_0\lhd X_1\lhd\dotsb\lhd X_m)$ then
$\aell(uw^{-1})=m=\codim(X_m)$. 
If $w$ avoids the four patterns we have by Theorem \ref{th:characterization} 
that $\aell(uw^{-1})=a\ell(u,w)$ and thus
\[
\sum_{X\in L_w}(-1)^{\codim X}|\mu(X)|t^{\dim X}
=\sum_{u\in[e,w]}(-1)^{a\ell(u,w)}t^{n-a\ell(u,w)},
\]
since $\phi$ is bijective. If $w$ does contain one of the four patterns 
Theorem \ref{th:Avoiding} gives inequality by substituting $t=-1$. 

Finally, make the substitution $t=-q^{-1}$.
\end{proof}

A well-known criterion, due to Lakshmibai and Sandhya~\cite{lakshmibaisandhya},
says that for a permutation $w\in\fs_n$, the Schubert variety
$\overline{BwB/B}$ is smooth if and only if $w$ avoids the patterns 3412 and 4231.
Let us say that such a permutation itself is \emph{smooth}.  Note that every smooth permutation is chromobruhatic.

Given $w\in\fs_n$ and regions $r$ and $r'$ of $\rr^{n-1} \setminus \rca_w$, let $d(r,r')$
denote the number of hyperplanes of $\rca_w$ that separate $r$ and $r'$.
Let $r_0$ be the region that contains the point $(1,\dotsc,n)$,
and define $R_w(q)=\sum_r q^{d(r_0,r)}$, where the sum is taken over
all regions of $\rca_w$.

Recently, Oh, Postnikov, and Yoo~\cite{ohpostnikovyoo}
showed that the Poincar\'e polynomial
$\sum_{u\in[e,w]}q^{\ell(u)}$ equals $R_w(q)$ if and only
if $w$ is smooth. They also link this polynomial to
the chromatic polynomial $\chi_{G_w}(t)$, and they are able
to compute the latter, which is very useful for us.

An index $r\in\{1,\dotsc,n\}$ is a \emph{record position} 
of a permutation $w\in\fs_n$ if $rw>\max\{1w,\dotsc,(r-1)w\}$.
For $i=1,\dotsc,n$, let $r_i$ and $r'_i$ be the record positions
of $w$ such that $r_i\le i<r'_i$ and there are no other record positions
between $r_i$ and $r'_i$. (Set $r'_i=+\infty$ if there
are no record positions greater than $i$.) Let
\[
e_i=\#\{j\,|\,r_i\le j<i,\ jw>iw\}
+\#\{k\,|\,r'_i\le k\le n,\ kw<iw\}.
\]
\begin{theorem}[Oh, Postnikov, Yoo]\label{th:smoothchromo}
For any smooth permutation $w\in\fs_n$, the chromatic polynomial
of the inversion graph of $w$ is given by
$\chi_{G_w}(t)=(t-e_1)(t-e_2)\dotsm(t-e_n)$.
\end{theorem}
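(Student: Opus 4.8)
The plan is to split the statement into a structural fact about the graph $G_w$ and a combinatorial matching of exponents. The first step is to note that $G_w$ is a permutation graph and that a short inspection of the $24$ patterns of length four shows $3412$ to be the unique one whose inversion graph is a $4$-cycle; since no permutation graph contains an induced cycle of length at least five, it follows that for any $w$ avoiding $3412$ --- in particular for any smooth $w$ --- the graph $G_w$ is chordal. By the classical theory of chordal graphs (equivalently, of supersolvable graphical arrangements) this already yields a factorisation $\chi_{G_w}(t)=\prod_{i=1}^{n}(t-a_i)$, where $a_1,\dots,a_n$ are the back-degrees along any perfect elimination ordering: building $G_w$ one vertex at a time in the reverse of that ordering, each new vertex is joined to a clique among the vertices already present, and therefore contributes a factor $t-a_i$ to the chromatic polynomial.

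It remains to show that the multiset $\{a_1,\dots,a_n\}$ equals $\{e_1,\dots,e_n\}$, and here the avoidance of $4231$ must genuinely be used: the permutation $4231$ has a chordal inversion graph, but its exponents are $\{0,1,2,2\}$ while its record numbers are $\{0,1,1,3\}$. I would proceed by induction on $n$. For the inductive step it suffices to exhibit a simplicial vertex $p$ of $G_w$ such that (i) $\deg_{G_w}(p)=e_p$, and (ii) deleting the $p$-th row and column of the rook diagram of $w$ produces, after flattening, a smooth permutation $w'\in\fs_{n-1}$ whose record numbers are precisely $\{e_j : j\ne p\}$ (after the obvious reindexing). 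Indeed, since $p$ is simplicial, $\chi_{G_w}(t)=(t-e_p)\,\chi_{G_{w'}}(t)$, and the inductive hypothesis finishes. A natural candidate is to take $p$ to be the simplicial vertex of $G_w$ of smallest value $pw$ --- equivalently, the position of the entry $1$ when that vertex happens to be simplicial, and the next available simplicial vertex otherwise; this is the choice dictated by the small cases.

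The main obstacle is the verification of (i) and (ii), i.e. controlling how the record structure of $w$ reacts to the deletion of $p$; this is where smoothness is used in full. The bookkeeping is organised by the blocks cut out by the record positions $R_1<\dots<R_m$ of $w$ (with $R_m$ the position of the value $n$): the two summands in the definition of $e_i$ count, respectively, the inversions $(j,i)$ with $j$ in the same block as $i$ and the inversions $(i,k)$ with $k$ in a strictly later block, so that $\sum_i e_i=|E(G_w)|=\sum_i a_i$ for every permutation $w$. Thus all the content of the theorem lies in matching the two multisets beyond their common sum, and the delicate point is that avoiding $4231$ is exactly what excludes the ``extra'' inversions that would otherwise force some $a_j$ strictly above $e_j$, as the permutation $4231$ itself exhibits. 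Concretely, I would establish $\deg_{G_w}(p)=e_p$ by a case analysis on where $p$ sits relative to its neighbouring record positions, and establish (ii) by tracking which records merge or survive when the corresponding row and column are removed; once both are in place the induction runs and gives $\chi_{G_w}(t)=\prod_{i=1}^n(t-e_i)$.
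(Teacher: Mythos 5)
This theorem is stated in the paper with the attribution ``Oh, Postnikov, Yoo'' and no proof: the authors cite it from the preprint \cite{ohpostnikovyoo} and use it as a black box in Section~\ref{sec:chromo}. So there is no proof in the paper to compare your argument against, and anything you write here is a fresh attempt at an externally sourced result.

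Your opening structural reductions are sound. The inversion graph of $3412$ is the $4$-cycle, and a quick check of all $w\in\fs_4$ with four inversions confirms $3412$ is the only one whose inversion graph is $C_4$; combined with the fact that permutation graphs contain no induced cycles of length five or more, this does show that $3412$-avoidance makes $G_w$ chordal. The chordal (equivalently, supersolvable) factorisation $\chi_{G_w}(t)=\prod_i(t-a_i)$ with the $a_i$ the back-degrees along a perfect elimination ordering is standard, and your observation that $\sum_i e_i = |E(G_w)| = \sum_i a_i$ for \emph{every} $w$ is correct: each inversion $(j,k)$ is counted exactly once in $\sum_i e_i$, either in $e_k$'s first summand (if $j,k$ share a block) or in $e_j$'s second summand (if $k$ lies in a strictly later block). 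The $4231$ example with exponent multiset $\{0,1,2,2\}$ versus record multiset $\{0,1,1,3\}$ correctly pins down where $4231$-avoidance is genuinely needed.

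The gap is that the proof stops at precisely the point where all the content lies. You state that the induction requires a simplicial vertex $p$ with (i) $\deg_{G_w}(p)=e_p$ and (ii) the deletion $w'=w-p$ having exponent multiset $\{e_j:j\neq p\}$, you label these ``the main obstacle,'' and you offer only a candidate and a plan (``I would establish\dots by a case analysis\dots by tracking which records merge or survive''). The candidate itself is shaky: the position of the value $1$ need not be simplicial --- in $w=2314$ the value $1$ sits at position $3$, whose neighbours $\{1,2\}$ are not an edge of $G_w$ --- and your fallback ``the next available simplicial vertex'' is not defined, nor is any property of it verified. Moreover, even granting a good choice of $p$, item (ii) is genuinely delicate: deleting a row and column does preserve smoothness (pattern containment is hereditary), but it can change which positions are records, so the exponents of $w'$ are computed with respect to a \emph{different} record decomposition than the one defining $\{e_j\}_{j\neq p}$ for $w$; nothing in the proposal controls that discrepancy. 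Until (i) and (ii) are actually proved --- with $4231$-avoidance invoked at the specific step where it excludes the bad configuration --- this is a plausible outline, not a proof.
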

Combining this with Theorem~\ref{th:chromatic} allows us to
compute the transposition distance generating function
$\sum_{u\in[e,w]}q^{a\ell(u,w)}$ for any smooth permutation
$w\in\fs_n$.

\section{Example: the permutation $w=4132$}\label{sec:example}
{
Consider the symmetric group $W=\fs_4$ generated by the
adjacent transpositions
$S=\{s_1=(1\, 2),\ s_2=(2\, 3),\ s_3=(3\, 4)\}$, and let
$w=4132=s_1s_2s_3s_2$ so that
$t_1=s_1=(1\, 2)$, $t_2=s_1s_2s_1=(1\, 3)$, $t_3=s_1s_2s_3s_2s_1=(1\, 4)$,
and $t_4=s_3=(3\, 4)$.
The intersection lattice $L_W$ is isomorphic to the
lattice of partitions of the set $\{1,2,3,4\}$ ordered by
refinement. (For instance, the partition $13|24$ corresponds to
the set
$\{(x_1,x_2,x_3,x_4)\in\rr^4\,|\,x_1=x_3\ \text{and}\ x_2=x_4\}\in L_W$.)
With this notation, the lattice $L_w$ looks like this:
\begin{center}
$\xymatrix @=0.8cm @C=0.4cm{
& & & 1234 & & & \\
123|4 \ar@{-}[urrr]|4 & & 124|3 \ar@{-}[ur]|4 & &
12|34  \ar@{-}[ul]|3 & & 134|2 \ar@{-}[ulll]|1 \\
12|3|4 \ar@{-}[u]|2 \ar@{-}[urr]|(.65)3 \ar@{-}[urrrr]|4 & &
13|2|4 \ar@{-}[ull]|(.65)1 \ar@{-}[urrrr]|4 & &
14|2|3 \ar@{-}[ull]|1 \ar@{-}[urr]|(.35)4 & &
1|2|34 \ar@{-}[ull]|(.35)1 \ar@{-}[u]|3 \\
& & & 1|2|3|4 \ar@{-}[ulll]|1 \ar@{-}[ul]|2 \ar@{-}[ur]|3 \ar@{-}[urrr]|4
& & &
}$
\end{center}
Here the coverings are labelled by indices; for instance,
since $\lambda(12|3|4\lhd 12|34)=H_4$, that edge is labelled
by 4. After finding the decreasing chains $C\in \CL$,
we obtain the following table.
\[
\begin{array}{l|l|lcc}
C & p(C) & p(C)w & & \\
\hline
\Hz & e & s_1s_2s_3s_2 & = & 4132 \\
\Hz\lhd 12|3|4 & t_1 & s_2s_3s_2 & = & 1432 \\
\Hz\lhd 12|3|4 \lhd 123|4 & t_1t_2 & s_3s_2 & = & 1342 \\
\Hz\lhd 12|3|4 \lhd 123|4 \lhd 1234 & t_1t_2t_4 & s_3 & = & 1243 \\
\Hz\lhd 12|3|4 \lhd 124|3 & t_1t_3 & e & = & 1234 \\
\Hz\lhd 12|3|4 \lhd 124|3 \lhd 1234 & t_1t_3t_4 & s_2 & = & 1324 \\
\Hz\lhd 12|3|4 \lhd 12|34 & t_1t_4 & s_2s_3 & = & 1423 \\
\Hz\lhd 13|2|4 & t_2 & s_1s_3s_2 & = & 3142 \\
\Hz\lhd 13|2|4 \lhd 134|2 & t_2t_4 & s_1s_3 & = & 2143 \\
\Hz\lhd 14|2|3 & t_3 & s_1 & = & 2134 \\
\Hz\lhd 14|2|3 \lhd 134|2 & t_3t_4 & s_1s_2 & = & 3124 \\
\Hz\lhd 1|2|34 & t_4 & s_1s_2s_3 & = & 4123
\end{array}
\]
Now, we draw the Bruhat graph of the interval $[e,w]$
with labelled fat edges forming paths that encode the
decreasing chains $C$.
\begin{center}
$\xymatrix @=0.8cm @C=0.4cm{
& & & 4132 & & & \\
& 1432 \ar@*{[|<2pt>]}@{-}[urr]|{t_1} & & 3142 \ar@*{[|<2pt>]}@{-}[u]|{t_2} & &
4123 \ar@*{[|<2pt>]}@{-}[ull]|{t_4} & \\
1342 \ar@{-}[ur] \ar@*{[|<2pt>]}@{-}[urrr]|(.35){t_1} & &
1423 \ar@{-}[ul] \ar@*{[|<2pt>]}@{-}[urrr]|(.35){t_1} & &
2143 \ar@{-}[ul] \ar@*{[|<2pt>]}@{-}[ur]|(.65){t_2} & &
3124 \ar@{-}[ulll] \ar@*{[|<2pt>]}@{-}[ul]|{t_3} \\
& 1243 \ar@{-}[ul] \ar@{-}[ur] \ar@*{[|<2pt>]}@{-}[urrr]|(.6){t_1} & &
1324 \ar@{-}[ulll] \ar@{-}[ul] \ar@*{[|<2pt>]}@{-}[urrr]|(.4){t_1} & &
2134 \ar@{-}[ul] \ar@{-}[ur] \ar@*{[|<2pt>]}@{-}[uuull]|(.47){t_3} & \\
& & & 1234 \ar@{-}[ull] \ar@{-}[u] \ar@*{[|<2pt>]}@{-}[urr]|{t_1} & & &
}$
\end{center}
By Theorem~\ref{th:injective}, the fat edges form a tree,
and by Proposition~\ref{pr:down}, the fat paths go
down from $w$. By Corollary~\ref{co:surjective},
the fat tree spans all of $[e,w]$.

Assume a chain $C = \{\Hz=X_0 \lhd \dotsb \lhd X_m\}\in \CL$, is such that the smallest hyperplane $H_k$ does not contain $X_m$. Then the chain 
$C_2 = \{X_0 \lhd \dotsb \lhd X_m\lhd (X_m\cap H_k)\}\in \CL$.
This implies that $p(C_2)=p(C)t_k$ and we may thus add $t_k$ from 
the right to any word of descending labels $p(C)$. 
Hence the tree of descending words consists of two isomorphic 
(as edge labelled graphs) copies connected by an edge labelled $t_k$.

Finally, let us relate Theorem~\ref{th:chromatic} to our example.
In the figure above, we see that
$\sum_{u\in[e,w]}q^{a\ell(u,w)}=1+4q+5q^2+2q^3$,
and by Theorem~\ref{th:smoothchromo},
$\chi_{G_w}(t)=(t-1)(t-0)(t-1)(t-2)$.
The reader may check that $\sum_{u\in[e,w]}q^{a\ell(u,w)}
=(-q)^n \chi_{G_w}(-q^{-1})$ as stated in Theorem~\ref{th:chromatic}.
}

\section{Open problems} \label{sec:open}
In this last section, we present some
ideas for future research.
Some of the open problems are intentionally left vague, while others
are more precise.

In Theorem~\ref{th:injective}, we showed that the map
$\phi:\CL \to [e,w]$
is injective for any finite Coxeter group,
but it is not surjective in general.
When the forbidden patterns are avoided, we use
an inductive counting argument showing that
the finite sets $\CL$ and $[e,w]$ have the same cardinality ---
then surjectivity of $\phi$ follows from injectivity.
\begin{open}
Is there a direct proof of
the surjectivity of $\phi$ or, if not, is there another
bijection $\CL \leftrightarrow [e,w]$ whose bijectivity can be proved directly.
\end{open}

\begin{open}
When $\phi$ is not surjective, what is its image?
\end{open}
\noindent
Considering Betti numbers, see Section \ref{S:Betti}, one can deduce that
the number of elements of even length not lying in the image of $\phi$ equals 
the number of such elements of odd length. In particular, evenly many elements
of $[e,w]$ do not lie in the image of $\phi$.

\begin{open}
Find a criterion for the surjectivity of $\phi$ in an arbitrary finite reflection group.
\end{open}

As noted in the introduction, our work (following Postnikov) marks the third appearance of the four patterns
4231, 35142, 42513, and 351624 in the study of flag manifolds and Bruhat order. The first time was in 2002 when Gasharov
and Reiner~\cite{GR} studied the cohomology
of smooth Schubert varieties in partial flag manifolds. In their
paper, they find a simple presentation for the integral cohomology ring,
and it turns out that this presentation holds for a larger class of
subvarieties of partial flag manifold, namely the ones {\em defined
by inclusions}. They characterize these varieties by the same pattern
avoidance condition that apppears in our work.

More recently, Sj\"ostrand~\cite{sjostrand} used
the pattern condition to characterize permutations
whose {\em right hull} covers exactly the lower Bruhat interval
below the permutation; see Lemma~\ref{lem:right_hull}.

As is discussed in~\cite{sjostrand} there seems to be
no direct connection between the ``right hull'' result and the
``defined by inclusions'' result. Though we use Sj\"ostrand's result
in the proof of Lemma~\ref{lem:inductivestep},
we have not found any simple reason
why the same pattern condition turns up again.
\begin{open}
Is there a simple reason why the same pattern condition turns up
in three different contexts: Gasharov and Reiner's
``defined by inclusions'', Sj\"ostrand's ``right hull'', and
Postnikov's (now proved) conjecture?
\end{open}

\begin{open}
Does the poset structure of the Bruhat interval determine the intersection
lattice uniquely? In other words, for any two finite Coxeter systems
$(W,S)$ and $(W',S')$ and elements $w\in W$, $w'\in W'$,
does $[e,w]\cong[e',w']$ imply $L_w\cong L_{w'}$?
\end{open}
\noindent
It is not hard to check that the
assertion is true for $\ell(w)\leq 4$. 

Finally, it would be interesting to know whether our results could
be extended to general Bruhat intervals, i.e.~$[u,w]$ with $u\ne e$.
\begin{open}
Given a (finite) Coxeter system $(W,S)$ and $u,w\in W$ with $u\le w$ in
Bruhat order, is there a hyperplane arrangement $\ca_{u,w}$,
naturally associated with $u$ and $w$, which has as many regions as
there are elements in $[u,w]$ (at least for $u,w$ in some interesting
subset of $W$)?
\end{open}

\end{document}